\documentclass[12pt]{amsart}
\usepackage{amssymb,amsmath, amsthm,latexsym,verbatim}
%,stmaryrd}
\usepackage{a4wide}
\usepackage{hyperref}

\theoremstyle{plain}
\newtheorem{theorem}{Theorem}
\newtheorem{lem}{Lemma}[section]

\newtheorem{prop}[lem]{Proposition}
\newtheorem{corollary}[lem]{Corollary}

\newtheorem{theostar}{Theorem}

\theoremstyle{definition}

\theoremstyle{remark}
\newtheorem {bmrk}[theorem] {Remark}

\newcommand{\C}{\mathbb{C}}
\newcommand{\R}{\mathbb{R}}
\newcommand{\Z}{\mathbb{Z}}
\newcommand{\Q}{\mathbb{Q}}

\newcommand{\suL}{\mathfrak{su}}

\newcommand{\diag}{\operatorname{diag}}
\newcommand{\rk}{\operatorname{rk}}
\newcommand{\gL}{\mathfrak{g}}
\newcommand{\kL}{\mathfrak{k}}

\newcommand{\aL}{\mathfrak{a}}
\newcommand{\pL}{\mathfrak{p}}
\newcommand{\nL}{\mathfrak{n}}

\newcommand{\Real}{\operatorname{Re}}
\newcommand{\Symm}{\operatorname{Symm}}

\newcommand{\SL}{\operatorname{SL}}
\newcommand{\SU}{\operatorname{SU}}
\newcommand{\Spin}{\operatorname{Spin}}

\newcommand{\SO}{\operatorname{SO}}
\newcommand{\Tr}{\operatorname{Tr}}
\newcommand{\pr}{\operatorname{pr}}

\newcommand{\Id}{\operatorname{Id}}
\newcommand{\Ad}{\operatorname{Ad}}
\newcommand{\vol}{\operatorname{vol}}
\newcommand{\Gal}{\operatorname{Gal}}
\newcommand{\End}{\operatorname{End}}

\newcommand{\ad}{\operatorname{ad}}

\newcommand{\Rep}{\operatorname{Rep}}
\newcommand{\Hom}{\operatorname{Hom}}
\newcommand{\Aut}{\operatorname{Aut}}

\newcommand{\reg}{\operatorname{reg}}

\newcommand{\ovl}[1]{\overline #1}

\newcommand{\bs}{\backslash}

\newcommand{\id}{\operatorname{Id}}

\newcommand{\G}{\mathrm{G}}

\newcommand{\so}{\mathcal{O}}

\newcommand{\fra}{\mathfrak{A}}
\newcommand{\frb}{\mathfrak{B}}

\newcommand{\fri}{\mathfrak{I}}

\newcommand{\frf}{\mathfrak{f}}

\newcommand{\tors}{\mathrm{tors}}

\newcommand{\CC}{\mathbb C}

\newcommand{\ZZ}{\mathbb Z}
\newcommand{\HH}{\mathbb H}

\newcommand{\QQ}{\mathbb Q}

\newcommand{\Ade}{\mathbb A}

\setcounter{equation}{0}
\setcounter{section}{0}
\parindent=0.3cm

\title[Torsion in symmetric powers]{The torsion in symmetric powers on congruence subgroups of Bianchi groups}

\author{Jonathan Pfaff and Jean Raimbault}
\address{Universit\"at Bonn\\
Mathematisches Institut\\
Endenicher Alle 60\\
D -- 53115 Bonn, Germany}
\email{pfaff@math.uni-bonn.de}
  \address{Institut de Math\'ematiques de Toulouse ; UMR5219 \\ Universit\'e de Toulouse ; CNRS \\ UPS IMT, F-31062 Toulouse Cedex 9, France}
  \email{Jean.Raimbault@math.univ-toulouse.fr}
\begin{document}

\maketitle

\begin{abstract}
In this paper we prove that for a fixed neat principal congruence subgroup of a Bianchi group the order of the torsion part of its second cohomology group with coefficients in an integral lattice associated to the $m$-th symmetric power of the standard representation of $\SL_2(\C)$ grows exponentially in $m^2$. We give upper and lower bounds for the growth rate. Our result extends a a result of W. M\"uller and S. Marshall, who proved the corresponding statement for closed arithmetic 3-manifolds, to the finite-volume case.  We also prove a limit multiplicity formula for twisted combinatorial Reidemeister torsion on higher dimensional hyperbolic manifolds. 
\end{abstract}

\setcounter{tocdepth}{1}
\tableofcontents
\setcounter{tocdepth}{2}

\section{Introduction}

\setcounter{equation}{0}
The torsion in the cohomology of arithmetic groups has recently attracted new interest from number theorists. Without aiming at completeness, we refer for example to \cite{BV}, \cite{CV}, \cite{Eme} and \cite{Scholze}. In this paper, we study the twisted cohomological torsion quantitatively for a fixed principal congruence subgroup of a Bianchi group under a variation of the local system. Bianchi groups represent all classes of non-uniform lattices in $\SL_2(\CC)$ ; thus our result complements the study of this question for arithmetic lattices in $\SL_2(\CC)$ defined over imaginary quadratic fields done by Simon Marshall and Werner M\"uller in \cite{MaM} (where the authors give an equality for the asymptotic torsion size, while we only get upper and lower bounds for the growth rate). 

To state our main result more precisely, we need to introduce some notation. Let $D\in\mathbb{N}$ be square-free and let $F=\Q(\sqrt{-D})$ be the associated imaginary quadratic number field with ring of integers $\mathcal{O}_D$. Let $\Gamma_D:=\SL_2(\mathcal{O}_D)$. Then $\Gamma_D$ is an arithmetic subgroup of $\SL_2(\C)$ which acts on $\SL_2(\C)/\SU(2)\cong\mathbb{H}^3$ and the quotient $\Gamma_D\backslash\mathbb{H}^3$ is a hyperbolic orbifold of finite volume. If $\aL$ is a non-zero ideal of $\mathcal{O}_D$, we let $\Gamma(\aL)$ denote the principal congruence subgroup of level $\aL$. This is a finite-index subroup of $\Gamma_D$ which is neat (i.e. none of its non-unipotent elements have a root of unity as an eigenvalue, in particular it is torsion-free) as soon as the norm $N(\aL)$ is sufficiently large ($N(\aL)\ge 9$ suffices). We shall assume this from now on. Thus, $X_{\aL}:=\Gamma(\aL)\backslash\mathbb{H}^3$ is an arithmetic hyperbolic manifold of finite volume. It is never compact and has finitely many cusps, whose number we shall denote by $\kappa(\Gamma(\aL))$. For $m\in\mathbb{N}$ let $\rho_m$ be the natural representation of $\SL_2(\C)$ on the $m$th symmetric power $V(m):=\Symm^m\C^2$ of it's standard representation on $\CC^2$. Then there exists a $\mathbb{Z}$-lattice $\Lambda(m)$ in $V(m)$ which is preserved by $\rho_m(\Gamma_D)$ (one can simply take $\Lambda(m) = \Symm^m\so_D^2$). 

Now one can form the integral cohomology groups $H^*(\Gamma(\aL);\Lambda(m))$ of the $\Gamma(\aL)$-modules $\Lambda(m)$. These are finitely generated abelian groups and thus they split as 
\[
H^*(\Gamma(\aL);\Lambda(m))=H^*(\Gamma(\aL);\Lambda(m))_{free}\oplus H^*(\Gamma(\aL);\Lambda(m))_{tors},
\] 
where the first group in this decomposition is a free finite-rank $\Z$-module and the second group is a finite abelian group. Moreover, $H^*(\Gamma(\aL);\Lambda(m))_{free}$ is a lattice in the real cohomology $H^1(\Gamma(\aL),V(m))$. In this paper we are interested in the behaviour of the size of the cohomology group $H^*(\Gamma(\aL);\Lambda(m)))$ as $m$ goes to infinity. First we note that
\begin{equation}\label{difree}
\dim H^1(\Gamma(\aL),V(m)) = \dim H^2(\Gamma(\aL),V(m)) = \kappa(\Gamma(\aL)
\end{equation}
for each $m\in\mathbb{N}$. This is easy to verify, see for example section \ref{comp_tors}. On the other hand, we will show that in degree 2 the size of the torsion part grows exponentially in $m^2$ as $m\to \infty$ and we will specify the growth rate.  More precisely, the main result of this paper is the following theorem (we stated it for the lattices $\Lambda(m)$ but we prove that the growth rates are the same for any sequence of $\Gamma(\aL)$-invariant lattices in $V(m)$---see Proposition \ref{comp_homtors}). 

\begin{theostar}\label{Mainthrm}
There exist constants $C_1(\Gamma_D)>0$, $C_2(\Gamma_D)>0$, which depend only on $\Gamma_D$ such that for each non-zero ideal $\aL$ of $\mathcal{O}_D$ with $N(\aL)>C_1(\Gamma_D)$ one has 
\begin{align}\label{esttorsb}
\liminf_{m\to \infty}\frac{\log|H^2(\Gamma(\aL);\Lambda(m))_{tors}|}{m^2}\geq \frac 1 2 \cdot\frac{\vol(X_{\aL})}{\pi}\left(1-\frac{C_1(\Gamma_D)}{N(\aL)}\right)>0
\end{align}
and 
\begin{align}\label{esttorsab}
\limsup_{m\to\infty}\frac{\log|H^2(\Gamma(\aL);\Lambda(m))_{tors}|}{m^2}\leq \frac{\vol(X_{\aL})}{\pi}\left(1+\frac{C_2(\Gamma_D)}{N(\aL)}\right).
\end{align}
Finally, we also have
\begin{align}\label{esttorsh1}
|H^1(\Gamma(\aL);\Lambda(m))_{tors}|=O(m\log m),
\end{align}
as $m\to\infty$. %Here for a finite group $H$ we denote by $|H|$ the number of its elements.
\end{theostar}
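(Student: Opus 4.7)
The plan is to follow the Cheeger--M\"uller/Bergeron--Venkatesh strategy, adapted to the cusped setting. The starting point is the identity
\[
\log \tau_R(X_\aL; \Lambda(m)) = \log|H^2(\Gamma(\aL);\Lambda(m))_{tors}| - \log|H^1(\Gamma(\aL);\Lambda(m))_{tors}| - \log R(\Lambda(m)),
\]
where $\tau_R$ denotes combinatorial Reidemeister torsion (of a suitable truncation of $X_\aL$) and $R(\Lambda(m))$ is the regulator comparing the integral lattice $H^*_{free}$ with an $L^2$-orthonormal basis of $V(m)$-valued harmonic forms; this follows because $H^0$ and $H^3$ vanish by \eqref{difree} once $m>0$. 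The idea is to compute $\log \tau_R$ analytically and then bound the two error terms $\log R(\Lambda(m))$ and $\log|H^1_{tors}|$ separately.

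To access $\log \tau_R$ asymptotically, I would invoke a Cheeger--M\"uller-type theorem for finite-volume hyperbolic 3-manifolds with coefficient system $V(m)$ (the relevant version being the combinatorial Reidemeister torsion limit multiplicity formula announced in the abstract), equating $\tau_R$ with the regularized Ray--Singer analytic torsion $T_{\mathrm{an}}(X_\aL; V(m))$ up to an explicit cusp contribution. The asymptotic behavior of $T_{\mathrm{an}}$ as $m \to \infty$ would then be extracted from a comparison with the $L^2$-analytic torsion via the Selberg trace formula: the identity contribution, computed from the Plancherel measure on $\SL_2(\C)$ applied to $\rho_m$, gives the leading asymptotic
\[
\log T^{(2)}(X_\aL; V(m)) = -\frac{\vol(X_\aL)}{\pi} m^2 + O(m),
\]
while the parabolic/Eisenstein contributions, controlled by counts in $\Gamma_D/\Gamma(\aL)$, produce the error factors $(1 \pm C/N(\aL))$.

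For the secondary terms: the bound \eqref{esttorsh1} should follow from the Borel--Serre long exact sequence relating $H^*(X_\aL;\Lambda(m))$ to cohomology of the boundary tori, combined with an explicit computation showing that torsion in $H^*(T^2;\Lambda(m)^N)$ (where $N$ denotes the unipotent radical of the corresponding parabolic) is of size $O(m \log m)$, essentially because the Jordan form of the unipotent action on $V(m)$ has a single block and the only new torsion is created by divisibility of consecutive eigenvalue invariants. The factor $\tfrac{1}{2}$ in \eqref{esttorsb} arises from an upper estimate
\[
\log R(\Lambda(m)) \leq \tfrac{1}{2}\cdot\tfrac{\vol(X_\aL)}{\pi} m^2 + o(m^2),
\]
obtained by bounding $L^2$-norms of Eisenstein cohomology representatives with integer periods; combined with the identity above, this yields the lower bound on $|H^2_{tors}|$, whereas the upper bound \eqref{esttorsab} only needs $\log R(\Lambda(m)) \geq o(m^2)$ in the other direction, which is easier.

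The main obstacle will be the non-compact Cheeger--M\"uller comparison: establishing it uniformly in $m$ and extracting explicit error dependence on $N(\aL)$ requires careful analysis of small eigenvalues of the twisted Laplacian near the cusps and of the twisted scattering matrix for Eisenstein series with coefficients in $V(m)$. A secondary difficulty is the sharp regulator bound responsible for the factor $\tfrac{1}{2}$, since controlling $L^2$-norms of integral Eisenstein classes on non-compact arithmetic manifolds is genuinely more subtle than the corresponding problem on closed hyperbolic 3-manifolds treated by Marshall--M\"uller.
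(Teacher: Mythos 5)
Your outline has the right skeleton (Reidemeister torsion, Cheeger--M\"uller for cusped manifolds, trace formula asymptotics for the analytic torsion), but it misidentifies the source of the factor $\tfrac12$ and consequently proposes a step that would not go through. You claim the $\tfrac12$ arises from an \emph{independent} regulator bound
$\log R(\Lambda(m)) \le \tfrac12\,\vol(X_\aL)m^2/\pi + o(m^2)$
obtained by controlling $L^2$-norms of integral Eisenstein cohomology representatives. The paper explicitly avoids this, and for good reason: nobody has such an a priori bound on integral Eisenstein periods. Instead the paper's Lemma \ref{Lemreg1} bounds the degree-$1$ regulator in terms of the \emph{unknown} quantity $\log|H^2_{\tors}|$ itself: using the long exact sequence for $(\overline X,\partial\overline X)$, Poincar\'e duality, and the self-duality of the symmetrized lattice $\bar\Lambda(m)$, one finds
$\log R^1 \le \log|H^2_{\tors}| + \kappa(\Gamma)\log d_{Eis,\mathbf C} + O(m\log m)$.
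Plugging this into the Bergeron--Venkatesh identity then gives $-\log\tau_{Eis} \le 2\log|H^2_{\tors}| + O(m\log m)$, and \emph{that} is where the $\tfrac12$ comes from. Your proposal also doesn't address the term $\log d_{Eis,\mathbf C}$, i.e. the denominator of the constant-term matrix: this is a substantial piece of work in the paper, requiring the adelic computation of the intertwining operator (sections \ref{intertwine_adele}--\ref{C_denominator}) and Damerell's bounds on denominators of Hecke $L$-values to show it is $O(m\log m)$. Finally, the symmetrization to $\bar\Lambda(m) = \Lambda(m)\oplus\check\Lambda(m)$ is not a cosmetic device --- it is what makes the Poincar\'e duality step in Lemma \ref{Lemreg1} identify $H^2(\overline X,\partial\overline X;\cdot)_{\tors}$ with $H^2(\overline X;\cdot)_{\tors}$ --- so it needs to appear in any version of this argument.

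A second gap concerns the upper bound \eqref{esttorsab}. You treat it as the ``easy direction'' of the same torsion identity, but the paper cannot run the argument that way: the comparison formula between Reidemeister torsion and regularized analytic torsion (Proposition \ref{Gluerestated}) carries cusp error terms that the authors can only control by taking a \emph{difference} $\tau_{Eis}(X_\aL) - \tau_{Eis}(X_{\aL_0})$ for two levels with matched cusp data (Propositions \ref{propetbianchi} and \ref{asRT}). No asymptotic of a single $\tau_{Eis}(X_\aL)$ is established, and the introduction explicitly flags this as the reason the upper bound has the non-optimal form $(1+C_2/N(\aL))$ rather than $1$. The upper bound is therefore proved by an entirely separate elementary route (Gabber--Soul\'e estimate on cochain complexes, Proposition \ref{propestab}), and then sharpened by combining with Proposition \ref{asRT}. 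Your single-path strategy would also fail for \eqref{esttorsb}: one must compare two levels to cancel the uncomputed cusp terms, which you have not built into the argument.

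The $H^1_{\tors}$ bound \eqref{esttorsh1} via the Borel--Serre boundary sequence is a plausible alternative to the paper's direct coinvariants computation (Lemma \ref{PropH1}), and is not a gap, merely a different route.
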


%The restriction $N(\aL)>C_1(\Gamma_D)$ is imposed in our main theorem for technical reasons. 
If the class number of $F$ is one and $\mathcal{O}_D^*=\{\pm 1\}$, we can take $C_1(\Gamma_D)=4$ (this is valid only for $D = -7,-11,-19,-43,-67$ and $-163$ by the Stark--Heegner Theorem). We shall now briefly sketch our method to prove our main result. The main point in our case is to establish the lower bound on the torsion given in \eqref{esttorsb}, i.e. to establish its exponential growth in $m^2$. Let us point out that there are two severe difficulties in the present non-compact case which are not present in the case of compact arithmetic 3-manifolds mentioned above. Firstly, the use of analytic torsion as a main tool is more complicated. Secondly, the real cohomology $H^*(\Gamma,V(m))$ does not vanish in our situation. We shall now describe these issues in more detail. 
 
As already observed by Nicolas Bergeron and Akshay Venkatesh in \cite{BV}, the size of cohomological torsion is closely related to the Reidemeister torsion of the underlying manifold with coefficients in the underlying local system. In the present finite volume case, one has to work with the twisted Reidemeister torsion of the Borel--Serre compactification $\overline{X}$ of $X$. For technical reasons, in most of the paper we also symmetrize the lattice $\Lambda(m)$ to a lattice $\bar{\Lambda}(m)$ in $\bar V(m):=V(m)\oplus V(m)^*$ which it self-dual over $\Z$ (it is then not hard to deduce the estimates in Theorem \ref{Mainthrm} from their analogues for $\bar\Lambda(m)$-coefficients). The Reidemeister torsion is then defined with respect to a canonical basis in the cohomology $H^*(X;\bar{V}(m))$ using Eisenstein cohomology classes following G\"unter Harder \cite{Ha}. By a gluing formula for the Reidemeister  torsion, which was obtained by the first author in \cite{Pf} building on work of Matthias Lesch \cite{Lesch}, the Reidemeister torsion can be compared to the regularized analytic torsion of the manifold $X$. The asymptotic behaviour of the regularized analytic torsion in the finite volume case for a variation of the local system has already been studied by M\"uller and the first author in \cite{MPtors1} using the Selberg trace formula. Thus we have to study the error term which occured in \cite{Pf} in the comparison formula between analytic and Reidemeister torsion. It turns out that our study of the error term can be performed without any changes also in the higher dimensional situations. While we do not compute the error term explicitly, we bring it in a form which is sufficient for the application to cohomological torsion.The main point is that the error term depends only on the geometry of the cusps which is very restricted on such manifolds.  Also, along the line we can establish limit multiplicity formulae for twisted Reidemeister torsion in the spirit of \cite{BV}  on arithmetic hyperbolic manifolds of finite volume of arbitrary dimension, see Corollary \ref{Corollarylimmult}.

Now we turn to the second aforementioned difficulty. Since the real cohomology of $\Gamma(\aL)$ with coefficients in $\bar{V}(m)$ does not vanish in the finite-volume case, we also have to study certain volume factors occuring in the comparison formula between Reidemeister torsion and the size of cohomological torsion. Since our basis in the real cohomology is given by Eisenstein series, this leads to the question about the integrality of certain quotients of $L$-functions evaluated at positive integers. In the 3-dimensional case, these are Hecke L-functions and we can use the work of R. Damerell \cite{Damerell1},\cite{Damerell2} to control these quotients. At the moment, we do not know how to do this in higher dimensions. 

Concerning the other statements of our main theorem, at least with a worse constant, the upper bound \eqref{esttorsab} can be established in an elementary and completely combinatorial way, without referring to analytic or Reidemeister torsion, see Proposition \ref{propestab}. In fact, our approach for the upper bound on the combinatorial torsion generalizes easily to arbitrary dimensions and to arbitrary rays in the weight lattice; it is similar to that used by V. Emery \cite{Eme} or R. Sauer \cite{Sa}. We note that if we were able to obtain a proper limit for the Redemeister torsion instead of the quotient of two such, we would obtain an optimal upper bound for the exponential growth rate of the torsion in the second cohomology, by an argument similar to that used in the proof of the easy part of \cite[Lemma 6.14]{congruence}. The last estimate \eqref{esttorsh1} in our theorem is the easiest one to prove, and does not require analytic torsion or any sophisticated tool. Since we work with a $\QQ$-split group the representations are easier to analyze that in the nonsplit case which was dealt with in \cite[section 4]{MaM}---we can work globally from the beginning. 

We finally remark that Theorem \ref{Mainthrm} also holds with the same proof for slightly more general rays of local systems. Namely, the finite dimensional irreducible representations of $\SL_2(\C)$ are parametrized as $\Symm^{n_1}\otimes\overline{\Symm}^{n_2}$, where $n_1,n_2\in\mathbb{N}$ and where $\overline{\Symm^{n_2}}$ is the complex conjugate of $\Symm^{n_2}$. Each such representation space carries a canonical $\Z$-lattice preserved by the action of $\Gamma_D$. If we fix $n_1$ and $n_2$ with $n_1\neq n_2$ and let $\rho_{m}(n_1,n_2)$ be the representation $\Symm^{mn_1}\otimes\overline{\Symm^{mn_2}}$, then the analog of Theorem \ref{Mainthrm} holds if we replace the factor $m^2$ by $m\dim \rho_{m}(n_1,n_2)$ which grows as $m^3$ if both $n_1$ and $n_2$ are not zero. However, we can by no means remove the assumption $n_1\neq n_2$. In other words, the ray $\rho_m(1,1)=\Symm^{m}\otimes \overline{\Symm^{m}}$, which is the ray carrying cuspidal cohomology, cannot be studied by our methods. For a fixed $\Gamma(\aL)$, we can only show that the size of cohomological torsion with coefficients in the canonical lattice associated to $\rho_m(1,1,)$ grows at most as $m\rk \rho_{m}(1,1)=O(m^3)$, but we can say nothing about the existence of torsion along this ray, i.e. we cannot establish any bound from below. The reason is that here $0$ belongs to the essential spectrum of the twisted Laplacian in the middle dimension. Therefore, essentially none of the results on analytic torsion we use in our proof is currently available for $\rho_m(1,1)$ and also the regulator would be more complicated. We remark that, as far as we know, even in the compact case no result for the growth of torsion along this particular ray has been obtained. For an investigation  of the dimension of the (cuspidal) cohomology along this ray, we refer to \cite{FGT}. 

\medskip

This paper is organized as follows : in section \ref{reg_analytic} we introduce the analytic torsion for cusped manifolds, then in section \ref{tors_cong} we study it further for congruence subgroups of Bianchi groups. We introduce the combinatorial (Reidemeister) torsion in section \ref{comp_tors} and recall the Cheeger--M\"uller equality proven in \cite{Pf} there. Then we study the intertwining operators in section \ref{intertwine_adele} and \ref{C_denominator}, first computing them using adeles and then bounding their denominators. The last sections contain the proof of the main theorem: in section \ref{lower_bound} we combine the results of the previous sections to prove \eqref{esttorsb}, and we prove \eqref{esttorsab} in section \ref{upper_bound}.

\medskip

{\bf Acknowledgement.} The first author was financially supported by the DFG-grant PF 826/1-1. He gratefully
acknowledges the hospitality of Stanford University in 2014 and 2015.

%%%%%%%%%%%%%%%%%%%%%%%%%%%%%%%%%%%%%%%%%%%%%%%%%%%%%%%%%%%%%%%%%%%%%%%%%%%%%%%%

\section{The regularized analytic torsion for coverings}
\label{reg_analytic}
\setcounter{equation}{0}
In this section we shall review the definition 
of regularized traces and the regularized analytic torsion of hyperbolic manifolds 
$X$ of finite volume. These objects are defined in terms 
of a fixed choice of  truncation parameters on $X$ and there 
are two different ways to  perform such a trunctation 
which are relevant in the present paper.
Firstly, one can define 
a truncation of $X$ via a fixed
choice of $\Gamma$-cuspidal parabolic subgroups of $G$. Secondly, if 
$X$ is a finite covering of a hyperbolic orbifold $X_0$, then a choice 
of truncation parameters on $X_0$ gives a truncation 
on $X$ in terms of which one can define another 
regularized analytic torsion. We shall compute 
the difference between the associate regularized analytic torsions explicitly. 
For more details we 
refer to \cite{MPtors1}, \cite{MPtors}.

We denote by $\SO^0(d,1)$ the identity-component of the 
isometry group of 
the standard quadratic form of signature $(d,1)$ 
on $\R^{d+1}$. Let $\Spin(d,1)$ denote 
the universal covering of $\SO^0(d,1)$. 
We let either $G:=\SO^0(d,1)$ or 
$G:=\Spin(d,1)$. 
We assume that $d$ is odd and write $d=2n+1$. Let $K:=\SO(d)$, if 
$G=\SO^0(d,1)$ or $K:=\Spin(d)$, if $G=\Spin(d,1)$. 
We let $\gL$ be the Lie algebra of $G$. Let 
$\theta$ denote the standard Cartan 
involution of $\gL$ and let $\gL=\kL\oplus\pL$ denote 
the associated Cartan decomposition of $\gL$, where
$\kL$ is the Lie algebra of $K$. Let $B$ be the Killing form. 
Then 
\begin{align}\label{metr}
\left<X,Y\right>:=-\frac{1}{2(d-1)}B(X,\theta Y)
\end{align}
is an inner product on $\gL$.  
Moreover, the globally 
symmetric space $G/K$, 
equipped with the $G$-invariant metric induced by
the restriction of \eqref{metr} to $\pL$ is isometric 
to the $d$-dimensional real hyperbolic space $\mathbb{H}^d$.
Let $\Gamma\subset G$ be a discrete, torsion-free subgroup. 
Then $X:=\Gamma\backslash\mathbb{H}^d$, equipped 
with the push-down of the metric on $\mathbb{H}^d$, is a $d$-dimensional 
hyperbolic manifold.
We let $P$ be a fixed  
parabolic subgroup of $G$ with Langlands 
decomposition $P=M_{P}A_{P}N_{P}$ as in \cite{MPtors1}. 
Let $\aL$ denote the Lie algebra of $A_{P}$ and 
$\exp:\aL\to A_{P}$ the exponential 
map. Then we fix a  restricted root $e_1$ 
of $\aL$ in $\gL$, let $H_1\in\aL$ 
be such that $e_1(H_1)=1$ and define $\iota_{P}:(0,\infty)\to A_{P}$ 
by $\iota_{P}(t):=\exp(\log tH_1)$.
If $P_1$ is another parabolic subgroup 
of $G$, we fix $k_P\in K$ with 
$P_1=k_{P_1}Pk_{P_1}^{-1}$ and define $A_{P_1}:=k_{P_1}A_{P}k_{P_1}^{-1}$, $M_{P_1}:=k_{P_1}M_{P_1}k_{P_1}^{-1}$
$N_P:=k_{P_1}N_{P_1}k_{P_1}^{-1}$. Moreover, for $t\in (0,\infty)$ we 
define $\iota_{P_1}(t):=k_{P_1}\iota_{P}(t)k_{P_1}^{-1}\in A_{P_1}$. For 
$Y>0$ we let $A_{P_1}[Y]:=\iota_{P_1}([Y,\infty))$. 
  
A parabolic subgroup $P_1$ of $G$ is called $\Gamma$-cuspidal 
if $\Gamma\cap N_{P_1}$ is a lattice in $N_{P_1}$. 
From now on, we assume that $\vol(X)$ 
is finite and that $\Gamma$ is neat in the sense of Borel, i.e. 
that $\Gamma\cap P_1=\Gamma\cap N_{P_1}$ for 
each $\Gamma$-cuspdail $P_1$.
If $P_1$ is $\Gamma$-cuspidal, 
then for $Y>0$ we put
\[
F_{P_1;\Gamma}(Y):=(\Gamma\cap N_{P_1})\backslash N_{P_1}\times A_{P_1}[Y]\cong(\Gamma\cap N_{P_1})\backslash N_{P_1}\times [Y,\infty). 
\]
We equip $F_{P_1;\Gamma}(Y)$ with the metric $y^{-2}g_{N_{P_1}}+y^{-2}dy^2$ where $g_{N_{P_1}}$ is 
the push-down of the invariant metric on $N_{P_1}$ induced by the innerer 
product \eqref{metr} restricted to $\mathfrak{n}_{P_1}$.

Let $\Rep(G)$ denote the set of finite-dimensional irreducible 
representations of $G$. For $\rho\in \Rep(G)$ the 
associated vector space $V_\rho$ posesses a distinugished 
inner product $\left<\cdot,\cdot,\right>_\rho$ which is called admissible and which is unique 
up to scaling. We shall fix an admissible inner product
on each $V_\rho$. 
If $\rho\in\Rep(G)$, then the restriction 
of $\rho$ to $\Gamma$ induces a flat vector bundle $E_\rho:=\tilde{X}\times_{\rho|_{\Gamma}}V_\rho$. This bundle is canonically 
isomorphic to the locally homogeneous bundle
$E'_\rho:=\Gamma\backslash G\times_{\rho|_{K}}V_\rho$
induced by the restriction 
of $\rho$ to $K$. In particular, since $\rho|_{K}$ is a unitary 
representation on $(V_{\rho},\left<\cdot,\right>_{\rho})$, the inner 
product $\left<\cdot,\right>_{\rho}$ induces a smooth bundle metric on 
$E'_\rho$ and therefore on $E_\rho$.  
For $p=0,\dots,d$ let 
$\Delta_p(\rho)$ denote the flat Hodge Laplacian 
acting on the smooth $E_\rho$-valued $p$-forms of $X$. Since $X$ 
is complete, $\Delta_p(\rho)$ with 
domain the smooth, compactly supported $E_\rho$-valued $p$-forms is essentially selfadjoint 
and its $L^2$-closure shall be denoted by the same symbol. 
Let $e^{-t\Delta_p(\rho)}$ denote the heat semigroup of $\Delta_p(\rho)$ and 
let 
\[
K_{X}^{\rho,p}(t,x,y)\in C^{\infty}(X\times X;E_\rho\boxtimes E_\rho^*)
\]
be the integral kernel of $e^{-t\Delta_p(\rho)}$.

We let $\mathfrak{P}_{\Gamma}$ be a fixed set of 
$\Gamma$-cuspidal parabolic subgroups of $G$. Then $\mathfrak{P}_{\Gamma}$ 
is non-empty if and only if $X$ is 
non-compact. Moreover, $\kappa(\Gamma):=\#\mathfrak{P}_\Gamma$ equals the 
number of cusps of $X$ which from now on we 
assume to be nonzero. 
The choice of $\mathfrak{P}_{\Gamma}$ and of a fixed base-point in $\tilde{X}$ determine an exhaustion 
of $X$ by smooth compact manifolds $X_{\mathfrak{P}_\Gamma}(Y)$ with boundary, $Y>>0$. This exhaustion depends on the choice of $\mathfrak{P}_\Gamma$.
Then one 
can show that the integral of $K_{X}^{\rho,p}(t,x,x)$ over $X_{\mathfrak{P}_\Gamma}(Y)$ 
has an asymptotic expansion
\begin{align}\label{Asexp1}
\int_{X_{\mathfrak{P}_\Gamma}(Y)}\Tr K_{X}^{\rho,p}(t,x,x)dx=
\alpha_{-1}(t)\log Y+\alpha_0(t)+o(1),
\end{align}                                                                          
as $Y\to\infty$, \cite{MPtors1}[section 5]. Now 
one can define the regularized trace 
$\Tr_{\reg;X;\mathfrak{P}_\Gamma} e^{-t\Delta_p(\rho)}$ of $e^{-t\Delta_p(\rho)}$ 
with respect to the choice of $\mathfrak{P}_{\Gamma}$ by 
$\Tr_{\reg;X;\mathfrak{P}_\Gamma} e^{-t\Delta_p(\rho)}:=\alpha_0(t)$, 
where $\alpha_0(t)$ is the constant term in the asymptotic 
expansion in \eqref{Asexp1}.

From now on, we also assume that there is a hyperbolic 
orbifold $X_0:=\Gamma_0\backslash\mathbb{H}^d$ such 
that $X$ is a finite covering of $X_0$. Let 
$\pi: X\to X_0$ denote the covering map. Then if 
a set of truncation parameters on $X_0$, or in other words a 
set $\mathfrak{P}_{\Gamma_0}$ of representatives of $\Gamma_0$-cuspidal parabolic subgroups 
are fixed, one obtains truncation parameters on 
$X$ by pulling back the truncation on $X_0$ via $\pi$. 
One can again show that there is an asymptotic expansion
\begin{align}\label{Asexp}
\int_{\pi^{-1}X_{\mathfrak{P}_{\Gamma_0}}(Y)}\Tr K_{X}^{\rho,p}(t,x,x)dx=
\tilde{\alpha}_{-1}(t)\log Y+\tilde{\alpha}_0(t)+o(1),
\end{align}
as $Y\to\infty$ and one can define the regulraized trace 
with respect to the truncation parameters on $X_0$ as $\Tr_{\reg;X;X_0}e^{-t\Delta_p(\rho)}:=\tilde{\alpha}_0(t)$. This regulararized 
trace depends only on the choice of a set $\mathfrak{P}_{\Gamma_0}$ of 
representatives of $\Gamma_0$-cuspidal parabolic subgroups of $G$. Put
\begin{align}\label{defKX}
K_{X;\mathfrak{P}_\Gamma}(t,\rho):=\sum_p (-1)^pp\Tr_{\reg;\mathfrak{P}_\Gamma} e^{-t\Delta_p(\rho)};\: 
K_{X;X_0}(t,\rho):=\sum_p (-1)^pp\Tr_{\reg;X;X_0} e^{-t\Delta_p(\rho)}.
\end{align}
Now assume that $\rho$ satisfies $\rho\neq\rho_{\theta}$.
Then one defines the analytic torsion with respect 
to the two truncations of $X$ by  
\begin{align}
&\log T_{X;\mathfrak{P}_\Gamma}(\rho):=\frac{1}{2}\frac{d}{ds}\biggl|_{s=0}\left(\frac{1}{\Gamma(s)}\int_0^\infty t^{s-1}K_{X;\mathfrak{P}_\Gamma}(t,\rho)dt\right);\label{Tors1}\\
&\log T_{X;X_0}(\rho):=\frac{1}{2}\frac{d}{ds}\biggr|_{s=0}\left(\frac{1}{\Gamma(s)}\int_0^\infty t^{s-1}K_{X;X_0}(t,\rho)\right)\label{Tors2}.
\end{align}
Here the integrals
converge absolutely and locally uniformly for $\Re(s)>d/2$ and 
are defined near $s=0$ by analytic continuation, \cite[section 7]{MPtors1}, \cite[section 9]{MPtors}.

To compare the two analytic torsions in \eqref{Tors1} and \eqref{Tors2}, we need to 
introduce some more notation. 
We fix $\mathfrak{P}_{\Gamma_0}=\{P_{0,1},\dots,P_{0,\kappa(\Gamma_0)}\}$ and $\mathfrak{P}_{\Gamma}=\{P_1,\dots,P_{\kappa(\Gamma)}\}$. 
Then for each $P_j\in\mathfrak{P}_{\Gamma}$ there exists a unique $l(j)\in\{1,\dots,\kappa(\Gamma_0)\}$ 
and a $\gamma_j\in\Gamma_0$ such that $\gamma_jP_j\gamma_j^{-1}=P_{0,l(j)}$. 
Write 
\begin{align}\label{gammprime}
\gamma_j=n_{0,l(j)}\iota_{P_{0,l(j)}}(t_{P_j})k_{0,l(j)},  
\end{align}
$n_{0,l(j)}\in N_{P_{0,l(j)}}$, $t_{P_j}\in(0,\infty)$,
$\iota_{P_{0,l(j)}}(t_{P_j})\in A_{P_{0,l(j)}}$ as above, and $k_{0,l(j)}\in K$.
Since 
$P_{0,l(j)}$ equals its normalizer,  
the projection of $\gamma_j$ to $\bigl(\Gamma_0\cap P_{0,l(j)}\bigr)\backslash \Gamma_0$ 
is unique. Moreover, since
$P_{0,l(j)}$ is $\Gamma_0$-cuspidal, one has 
$\Gamma_0\cap P_{0,l(j)}=\Gamma_0\cap N_{P_{0,l(j)}}M_{P_{0,l(j)}}$. Thus 
$t_{P_j}$ depends only on $\mathfrak{P}_{\Gamma_0}$ and $P_j$.

Now the analytic torsions $\log T_{\mathfrak{P}_{\Gamma}}(X_1;E_\rho)$ 
and $\log T_{X_0}(X;E_\rho)$ are compared in  
the following proposition.
\begin{prop}\label{Compartors}
One has
\[
\log T_{\mathfrak{P}_{\Gamma}}(X;E_\rho)=\log T_{X_0}(X;E_\rho)+\sum_{P_j\in\mathfrak{P}_{\Gamma}}\log(t_{P_j})\left(\sum_{k=0}^n\frac{(-1)^k\dim(\sigma_{\rho,k})\lambda_{\rho,k}}{2}\right).
\]
\end{prop}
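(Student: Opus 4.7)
The plan is to compute the difference of the two regularized traces $\Tr_{\reg;X;\mathfrak{P}_\Gamma} e^{-t\Delta_p(\rho)}$ and $\Tr_{\reg;X;X_0} e^{-t\Delta_p(\rho)}$ one cusp at a time, and then insert the result into the Mellin definitions \eqref{Tors1}--\eqref{Tors2}.

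The first key observation is that the exhaustions $X_{\mathfrak{P}_\Gamma}(Y)$ and $\pi^{-1}X_{\mathfrak{P}_{\Gamma_0}}(Y)$ coincide away from the cusps. On the $j$-th cuspidal end, the decomposition \eqref{gammprime} of $\gamma_j$ shows that pulling back the $X_0$-truncation at level $Y$ amounts, in the natural $P_j$-coordinates on $X$, to truncating at level $Y\cdot t_{P_j}$: the factors $n_{0,l(j)}$ and $k_{0,l(j)}$ act as isometries of the horospherical leaves and do not shift the $A_{P_j}$-direction, so the only difference is the rescaling by $\iota_{P_{0,l(j)}}(t_{P_j})$. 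Substituting $Y\mapsto Yt_{P_j}$ in the asymptotic expansion \eqref{Asexp1} at each cusp and extracting the new constant term in $\log Y$ gives
\[
\Tr_{\reg;X;X_0} e^{-t\Delta_p(\rho)} - \Tr_{\reg;X;\mathfrak{P}_\Gamma} e^{-t\Delta_p(\rho)} = \sum_{P_j\in\mathfrak{P}_\Gamma} \log(t_{P_j})\,\alpha_{-1,P_j,p}(t),
\]
where $\alpha_{-1,P_j,p}(t)$ denotes the $\log Y$-coefficient in the asymptotic expansion of $\int_{F_{P_j;\Gamma}(Y)}\Tr K_X^{\rho,p}(t,x,x)\,dx$. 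This is a purely local cuspidal quantity depending only on the model heat kernel on the hyperbolic cusp.

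The second step is to evaluate the alternating sum $\sum_p (-1)^p p\,\alpha_{-1,P_j,p}(t)$ and its Mellin transform at $s=0$. Here I would use the product structure of the heat kernel on the hyperbolic cusp and the decomposition of $\Lambda^p \pL^*\otimes V_\rho$ into $M_{P_j}$-isotypic components $\sigma_{\rho,k}$ with Casimir eigenvalues $\lambda_{\rho,k}$; the log-divergent piece in \eqref{Asexp1} comes entirely from the zero-mode on $N_{P_j}$. The explicit identification of this piece (after the alternating sum collapses) is essentially the calculation performed for $d=3$ in \cite{MPtors1} and in arbitrary odd dimension in \cite{MPtors}. Plugging the resulting expression into the Mellin formula \eqref{Tors1}--\eqref{Tors2}, the factor $1/2$ in the torsion definition and the derivative at $s=0$ combine to produce, at each cusp, the coefficient $\sum_{k=0}^n \frac{(-1)^k\dim(\sigma_{\rho,k})\lambda_{\rho,k}}{2}$ in front of $\log(t_{P_j})$, which yields the proposition.

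The main obstacle is this last step: the explicit representation-theoretic evaluation of $\sum_p (-1)^p p\,\alpha_{-1,P_j,p}(t)$ and the careful tracking of signs (note that $t_{P_j}$ may lie on either side of $1$, so the annular regions $F_{P_j;\Gamma}(Y)\setminus F_{P_j;\Gamma}(Yt_{P_j})$ come with different orientations for different cusps). As the authors emphasize in the introduction, however, this cuspidal computation transfers without essential changes from the three-dimensional setting of \cite{MPtors1} to the general odd-dimensional setting treated in \cite{MPtors}, so the heavy lifting is imported from those references and only the cusp-by-cusp geometric comparison in the first paragraph is really new here.
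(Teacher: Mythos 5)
Your proposal follows the paper's proof in all essential respects: both compare the two truncations cusp by cusp, identify the difference of regularized trace constant terms as $\log t_{P_j}$ times the $\log Y$-coefficient of the cusp heat-trace expansion, identify that coefficient as a Kostant-type alternating sum over the $M_{P_j}$-isotypes $\sigma_{\rho,k}$ (importing the explicit identification from \cite{MPtors1} and \cite{MPtors}), and finish by Mellin transform at $s=0$. You spell out the geometric mechanism behind the $Y\mapsto Y t_{P_j}$ rescaling slightly more explicitly than the paper, which merely cites \eqref{Hilfsgl1}, \eqref{Hilfsgl2} and \cite[Proposition 8.2]{MPtors1}, but the argument is the same.
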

\begin{proof}
This follows by an application of a theorem of Kostant \cite{Kostant} on 
nilpotent Lie algebra cohomology.
For $p\in\{0,\dots,d\}$ define a representation 
of $K$ on $\Lambda^p\pL^*\otimes V(\rho)$ by $\nu_p(\rho):=\Lambda^p\Ad^*\otimes\rho$. 
Let $\tilde{E}_{\nu_p(\rho)}:=G\times_{\nu_p(\rho)}K$, which is a homogeneous 
vector bundle over $\mathbb{H}^d=G/K$. 
Let $\Omega$ be the Casimir element of $\gL$. Then $-\Omega+\rho(\Omega)$ 
induces canonically a Laplace-type operator $\tilde{\Delta}_p(\rho)$ which 
acts on the smooth sections of $\tilde{E}_{\nu_p(\rho)}$. The heat 
semigroup of $e^{-t\tilde{\Delta}_p(\rho)}$ is canonically represented by 
a smooth function $H_t^{p,\rho}:G\to\End(\Lambda^p\pL^*\otimes V_\rho)$, 
\cite[section 4, section 7]{MPtors1}. Let 
$h_t^{p,\rho}:=\Tr H_t^{p,\rho}$ and put 
\[
k_t^{p,\rho}:=\sum_{p}(-1)^pph_t^{p,\rho}.
\]
Then by the definition of the regularized traces, one has 
\begin{align}\label{Hilfsgl1}
\int_{X_{\mathfrak{P}_{\Gamma}(Y)}}\left(\sum_{\gamma\in\Gamma} k_t^{p,\rho}(x^{-1}\gamma x)\right)dx
=\alpha_{-1}(t;\rho)\log{Y}+K_{X;\mathfrak{P}_{\Gamma}}(t,\rho)+o(1),
\end{align}
as $Y\to\infty$, resp. 
\begin{align}\label{Hilfsgl2}
\int_{\pi^{-1}(X_{\mathfrak{P}_{\Gamma_0}(Y)})}\left(\sum_{\gamma\in\Gamma} k_t^{p,\rho}(x^{-1}\gamma x)\right)dx
=\tilde{\alpha}_{-1}(t;\rho)\log{Y}+
K_{X;X_0}(t,\rho)+o(1),
\end{align}
as $Y\to\infty$, where we use the notation \eqref{defKX}. 
On the other hand, for $k=0,\dots,n$ let $h_t^{\sigma_{\rho,k}}\in C^\infty(G)$ 
be defined as in \cite[(8.7)]{MPtors1}. 
If we apply the same considerations as in \cite[section 6]{MPtors} to the functions $h_t^{\sigma_{\rho,k}}$, then 
combining \cite[Proposition 8.2]{MPtors1}, \eqref{Hilfsgl1} and \eqref{Hilfsgl2} we obtain
\begin{align*}
K_{X;X_0}(t,\rho)=K_{X;\mathfrak{P}_{\Gamma}}(t,\rho)-
\sum_{P_j\in\mathfrak{P}_{\Gamma}}\log{t_{P_j}}
\left(\sum_{k=0}^{n}\frac{(-1)^k\dim(\sigma_{\rho,k})e^{-t\lambda_{\rho,k}^2}}
{\sqrt{4\pi t}}\right).
\end{align*}
Taking the Mellin transform, the Proposition follows. 
\end{proof}
Next, as in \cite{Pf}, for each $P_j\in\mathfrak{P}_{\Gamma}$ 
and for $Y>0$ one can define the regularized analytic torsion 
$T(F_{P_j;\Gamma}(Y),\partial F_{P_j;\Gamma}(Y);E_\rho)$
of $F_{P_j;\Gamma}(Y)$ and the bundle $E_\rho|_{F_{P_j;\Gamma}(Y)}$, where one takes relative boundary conditions. 
For different $Y$, these torsions are compared by the following 
gluing formula.

\begin{lem}\label{Glucusp}
Let $c(n)\in\R$ be as in \cite[equation 15.10]{Pf}. 
Then for $Y_1>0$ and $Y_2>0$ one has 
\begin{align*}
&\log T(F_{P_j;\Gamma}(Y_1),\partial F_{P_j;\Gamma}(Y_1);E_\rho)-
c(n)\vol(\partial F_{P_j;\Gamma}(Y_1))\rk(E_\rho)
\\=
&\log T(F_{P_j;\Gamma}(Y_2),\partial F_{P_j;\Gamma}(Y_2);E_\rho) 
-c(n)\vol(\partial F_{P_j;\Gamma}(Y_2))\rk(E_\rho)\\+&
\sum_{k=0}^{n}(-1)^{k+1}\lambda_{\rho,k}\dim(\sigma_{\rho,k})(\log(Y_2)-\log(Y_1)).
\end{align*}
\end{lem}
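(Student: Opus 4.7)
The strategy is a gluing argument combined with an explicit computation on a compact collar. Assume without loss of generality $Y_1<Y_2$ and decompose
\[
F_{P_j;\Gamma}(Y_1)=C\cup_\Sigma F_{P_j;\Gamma}(Y_2),
\]
where $C:=(\Gamma\cap N_{P_j})\backslash N_{P_j}\times[Y_1,Y_2]$ is a compact collar and $\Sigma:=(\Gamma\cap N_{P_j})\backslash N_{P_j}\times\{Y_2\}$ is a smooth gluing hypersurface across which both the hyperbolic metric $y^{-2}g_{N_{P_j}}+y^{-2}dy^2$ and the flat bundle $E_\rho$ extend smoothly.

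First I would apply the Vishik--Br\"uning--Ma gluing formula for (regularized) analytic torsion, in the form used in \cite{Pf}, with relative boundary conditions on $\partial F_{P_j;\Gamma}(Y_1)$. This yields
\[
\log T(F_{P_j;\Gamma}(Y_1),\partial F_{P_j;\Gamma}(Y_1);E_\rho)=\log T(C,\partial_-C;E_\rho)+\log T(F_{P_j;\Gamma}(Y_2),\partial F_{P_j;\Gamma}(Y_2);E_\rho)+\mathcal{A}(\Sigma),
\]
with $\partial_-C=\partial F_{P_j;\Gamma}(Y_1)$ carrying relative and $\partial_+C=\Sigma$ carrying absolute boundary conditions, and $\mathcal{A}(\Sigma)$ collecting the Br\"uning--Ma boundary anomaly at $\Sigma$ together with the torsion of the Mayer--Vietoris long exact sequence. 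Since the metric and bundle extend smoothly across $\Sigma$ and each side deformation retracts onto the flat torus $(\Gamma\cap N_{P_j})\backslash N_{P_j}$, the cohomological piece is trivial, and the anomaly reduces to a universal local boundary integrand which is a constant multiple of $\vol(\Sigma)\rk(E_\rho)$ depending only on $n$.

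The bulk of the work is to compute $\log T(C,\partial_-C;E_\rho)$ explicitly. On $C$ the twisted Hodge Laplacian separates: the Plancherel decomposition for the flat torus $(\Gamma\cap N_{P_j})\backslash N_{P_j}$ combined with Kostant's theorem for the restriction of $\rho$ to $M_{P_j}A_{P_j}$---which splits $\rho|_{M_{P_j}A_{P_j}}$ into pieces indexed by the data $(\sigma_{\rho,k},\lambda_{\rho,k})$---reduces the Laplacian to a direct sum of tensor products of a flat-torus Laplacian with one-dimensional Sturm--Liouville operators on $[Y_1,Y_2]$ in the hyperbolic variable. Zeta-regularising these one-dimensional determinants and taking the alternating sum, in a calculation parallel to \cite[sec.~15]{Pf}, produces
\[
\log T(C,\partial_-C;E_\rho)=c(n)\bigl(\vol(\partial F_{P_j;\Gamma}(Y_1))-\vol(\partial F_{P_j;\Gamma}(Y_2))\bigr)\rk(E_\rho)-\mathcal{A}(\Sigma)+\sum_{k=0}^{n}(-1)^{k+1}\lambda_{\rho,k}\dim(\sigma_{\rho,k})(\log Y_2-\log Y_1);
\]
the constant $c(n)$ of \cite[eq.~(15.10)]{Pf} is defined precisely so that the local boundary integrand on the cusp assembles into this clean combination, while the logarithmic factor $\log Y_2-\log Y_1$ arises from the fact that the natural length scale on the cusp is $\log y$.

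Substituting this expression into the gluing identity of the first step, the $\mathcal{A}(\Sigma)$ contributions cancel and the $c(n)$-boundary terms regroup into the asserted equality. The main obstacle is the separation-of-variables step: one must evaluate the Sturm--Liouville determinants in such a way that their local boundary integrand matches the Br\"uning--Ma anomaly with the specific constant $c(n)$, so that only the clean logarithmic $Y$-dependence survives; this matching is essentially forced by the scaling behaviour of the hyperbolic cusp metric and is the content of the boundary computation already carried out in \cite{Pf}.
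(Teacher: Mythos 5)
The paper's own proof is a one-line citation to three specific results in \cite{Pf} (Corollary 15.4, equation (15.11), Corollary 16.2), which together already contain an explicit enough description of $\log T(F_{P_j;\Gamma}(Y),\partial F_{P_j;\Gamma}(Y);E_\rho)$ as a function of the truncation parameter $Y$ that the lemma follows by subtraction. Your proposal instead re-derives the comparison by cutting $F_{P_j;\Gamma}(Y_1)$ along $\Sigma=T\times\{Y_2\}$ into a compact collar $C$ and the smaller cusp $F_{P_j;\Gamma}(Y_2)$ and invoking a gluing formula. That decomposition is a legitimate alternative organization, and the observation that the cohomological correction in the Mayer--Vietoris sequence is trivial (since both pieces retract onto the same torus) is correct.

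There is, however, a structural problem in the way you close the argument. After writing the gluing identity with an error term $\mathcal{A}(\Sigma)$ (which you define as the gluing anomaly \emph{plus} the torsion of the Mayer--Vietoris sequence), you then assert that the separation-of-variables computation on the collar gives
\[
\log T(C,\partial_-C;E_\rho)=c(n)\bigl(\vol(\partial F(Y_1))-\vol(\partial F(Y_2))\bigr)\rk(E_\rho)-\mathcal{A}(\Sigma)+\sum_{k=0}^{n}(-1)^{k+1}\lambda_{\rho,k}\dim(\sigma_{\rho,k})(\log Y_2-\log Y_1).
\]
But $\log T(C,\partial_-C;E_\rho)$ is an intrinsic invariant of $(C,\partial_-C,E_\rho)$ and cannot possibly contain the gluing anomaly $\mathcal{A}(\Sigma)$, which is defined only relative to the ambient manifold and the cut. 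What an honest Sturm--Liouville computation on the collar would produce is an intrinsic expression whose boundary contributions live at $\partial_-C$ and at $\Sigma$ separately; the $\Sigma$-contribution of $C$, the $\Sigma$-contribution of $F(Y_2)$, and the gluing correction must then be shown to combine to zero. By inserting $-\mathcal{A}(\Sigma)$ into the collar formula by fiat so that it cancels with the $+\mathcal{A}(\Sigma)$ in the gluing identity, you are reverse-engineering the output rather than proving it; the cancellation is precisely the nontrivial step and is not supplied. Moreover, the collar computation you defer to is, modulo the $y\in[Y_1,Y_2]$ versus $y\in[Y,\infty)$ difference, the same separation-of-variables calculation that \cite{Pf} performs to obtain the cited Corollary 15.4 directly for the cusp, so the gluing detour does not save any analytic work. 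Finally, the applicability of the Vishik/Br\"uning--Ma/Lesch gluing formula when one of the two pieces is the non-compact cusp $F(Y_2)$ with regularized torsion is itself a nontrivial point established in \cite{Pf} for a specific cut, and extending it to an interior cut inside a cusp end deserves at least a remark.
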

\begin{proof}
This follows immediately from \cite[Corollary 15.4, equation (15.11) and Corollary 16.2]{Pf}.
\end{proof}

We let $\overline{X}$ denote the 
Borel-Serre compactification of $X$ and we 
let $\tau_{Eis}(X;E_\rho)$ be the 
Reidemeister torsion of $\overline{X}$ with coefficients in $E_\rho$, 
defined as in \cite[section 9]{Pf}.
For simplicity, we assume that $\Gamma$ is normal in $\Gamma_0$. 
Then for the torsion $T_{X_0}(X;E_\rho)$, the main 
result of \cite{Pf} can be restated as follows.

\begin{prop}\label{Gluerestated}
For the analytic torsion $T_{X_0}(X;E_\rho)$ one has 
\begin{align*}
&\log\tau_{Eis}(X;E_\rho)=\log T_{X_0}(X;E_\rho)-\frac{1}{4}
\sum_{k=0}^{d-1}(-1)^k\log|\lambda_{\rho,k}|\dim H^k(\partial \overline{X};E_\rho)\\
&-\sum_{l=1}^{\kappa(\Gamma_0)}\#\{P_j\in\mathfrak{P}_{\Gamma}\colon \gamma_jP_j\gamma_j^{-1}=P_{0,l}\}\biggl(\log T\left(F_{P_{0,l;\Gamma}}(1),\partial F_{P_{0,l;\Gamma}}(1);E_\rho\right)\\ &-c(n)\vol(\partial F_{P_{0,l;\Gamma}}(1))\rk(E_\rho)\biggr).
\end{align*}
\end{prop}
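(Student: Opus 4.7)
The plan is to obtain Proposition \ref{Gluerestated} by translating the main theorem of \cite{Pf}, which is naturally formulated relative to the truncation $\mathfrak{P}_\Gamma$ attached to the cusps of $X$ itself, into a statement relative to the truncation pulled back from $X_0$. Three ingredients intervene: the main Cheeger--M\"uller equality of \cite{Pf} for $T_{\mathfrak{P}_\Gamma}(X;E_\rho)$, Proposition \ref{Compartors} to convert between the two regularized analytic torsions, and Lemma \ref{Glucusp} combined with an isometry built from $\gamma_j\in\Gamma_0$.

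First, I would apply the Cheeger--M\"uller equality from \cite{Pf}, which expresses $\log\tau_{Eis}(X;E_\rho)$ as $\log T_{\mathfrak{P}_\Gamma}(X;E_\rho)$ minus the stated boundary term $-\tfrac14\sum_k(-1)^k\log|\lambda_{\rho,k}|\dim H^k(\partial\overline X;E_\rho)$, minus a sum over $P_j\in\mathfrak{P}_\Gamma$ of cusp contributions $\log T(F_{P_j;\Gamma}(1),\partial F_{P_j;\Gamma}(1);E_\rho)-c(n)\vol(\partial F_{P_j;\Gamma}(1))\rk(E_\rho)$. Next, I would invoke Proposition \ref{Compartors} to replace $\log T_{\mathfrak{P}_\Gamma}(X;E_\rho)$ by $\log T_{X_0}(X;E_\rho)$ at the price of the extra term $\sum_{P_j\in\mathfrak{P}_\Gamma}\log(t_{P_j})\cdot\tfrac12\sum_k(-1)^k\dim(\sigma_{\rho,k})\lambda_{\rho,k}$.

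The third step is to rewrite each cusp piece attached to $P_j$ as one attached to $P_{0,l(j)}$. Because $\Gamma$ is assumed normal in $\Gamma_0$, every $\gamma_j\in\Gamma_0$ descends to an isometry of $X$ covered by an isometry of the Hermitian flat bundle $E_\rho$. Using the Iwasawa decomposition $\gamma_j=n_{0,l(j)}\iota_{P_{0,l(j)}}(t_{P_j})k_{0,l(j)}$ together with $k_{0,l(j)}\iota_{P_j}(Y)k_{0,l(j)}^{-1}=\iota_{P_{0,l(j)}}(Y)$, this isometry identifies the cusp neighborhood $F_{P_j;\Gamma}(Y)$ with $F_{P_{0,l(j)};\Gamma}(t_{P_j}Y)$; hence the combinatorial data entering the cusp torsion is preserved and one obtains an equality between $\log T(F_{P_j;\Gamma}(1),\partial F_{P_j;\Gamma}(1);E_\rho)-c(n)\vol(\partial F_{P_j;\Gamma}(1))\rk(E_\rho)$ and the analogous quantity for $F_{P_{0,l(j)};\Gamma}(t_{P_j})$. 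Applying Lemma \ref{Glucusp} to shift the truncation height from $t_{P_j}$ to $1$ inside the cusp of $P_{0,l(j)}$ produces the desired cusp term at $F_{P_{0,l(j)};\Gamma}(1)$, modulo an explicit correction proportional to $\log(t_{P_j})$.

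The last step is to verify that the $\log(t_{P_j})$ contributions coming out of Proposition \ref{Compartors} and those coming out of the height-shift in Lemma \ref{Glucusp} cancel term by term, after which the sum $\sum_{P_j\in\mathfrak{P}_\Gamma}$ can be regrouped as $\sum_{l=1}^{\kappa(\Gamma_0)}\#\{P_j\in\mathfrak{P}_\Gamma:\gamma_jP_j\gamma_j^{-1}=P_{0,l}\}$ times the cusp torsion at $F_{P_{0,l};\Gamma}(1)$. The only delicate point is this cancellation: one must check that the coefficient $\tfrac12\sum_k(-1)^k\dim(\sigma_{\rho,k})\lambda_{\rho,k}$ coming from the conversion of regularized analytic torsions is matched by the coefficient produced by Lemma \ref{Glucusp} under the particular height shift induced by the isometry, which requires a careful reading of the normalizations in \cite[sections 15, 16]{Pf}. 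Granted this bookkeeping, the formula of Proposition \ref{Gluerestated} follows immediately.
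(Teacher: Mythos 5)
Your proposal reproduces the paper's proof step for step: the same three ingredients appear in the same order, namely the Cheeger--M\"uller equality \cite[Theorem 1.1]{Pf} for $T_{\mathfrak{P}_\Gamma}(X;E_\rho)$, the conversion to $T_{X_0}(X;E_\rho)$ via Proposition \ref{Compartors}, and the cusp-by-cusp isometry $F_{P_j;\Gamma}(1)\cong F_{P_{0,l(j)};\Gamma}(t_{P_j})$ coming from normality of $\Gamma$ in $\Gamma_0$ followed by the height shift of Lemma \ref{Glucusp}. The "delicate cancellation" of the $\log(t_{P_j})$ terms that you flag is also left implicit in the paper (whose proof concludes with "Applying Lemma \ref{Glucusp}, the Proposition follows"), so your argument is at essentially the same level of detail as the published one.
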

\begin{proof}
By \cite[Theorem 1.1]{Pf}  
and by Proposition \ref{Compartors} we have
\begin{align*}
&\log\tau_{Eis}(X;E_\rho)=\log T_{X_0}(X;E_\rho)+
\sum_{P_j\in\mathfrak{P}_{\Gamma}}\log(t_{P_j})\left(\sum_{k=0}^n\frac{(-1)^k\dim(\sigma_{\rho,k})\lambda_{\rho,k}}{2}\right)\\
-&\sum_{P_j\in\mathfrak{P}_{\Gamma}}\left(\log T\left(F_{P_j;\Gamma}(1),\partial F_{P_j;\Gamma}(1);E_\rho\right)-c(n)\vol(\partial F_{P_j;\Gamma}(1))\rk(E_\rho)\right)
\\ &-\frac{1}{4}
\sum_{k=0}^{d-1}(-1)^k\log|\lambda_{\rho,k}|\dim H^k(\partial \overline{X};E_\rho)
\end{align*}
where we recall that the regularized analytic torsion used 
in \cite[Theorem 1.1]{Pf} is the torsion denoted $T_{\mathfrak{P}_{\Gamma}}(X;E_\rho)$ here. 
Using that $\Gamma$ is normal in $\Gamma_0$, it easily follows from the definition of $t_{P_j}$ that for each $P_j\in\mathfrak{P}_{\Gamma}$ one has a canonical isometry 
$\iota_{P_j;P_{0,l(j)}}:F_{P_j;\Gamma}(1)\cong F_{P_{0,l(j)};\Gamma}(t_{P_j})$. It is 
easy to see that also $\iota_{P_j;P_{0,l(j)}}^*\left(E_{\rho}|_{F_{P_{0,l(j)};\Gamma}(t_{P_j})}\right)$ 
is isometric to $E_{\rho}|_{F_{P_j;\Gamma}(1)}$. Thus we 
have 
\begin{align*}
&\log T(F_{P_j;\Gamma}(1),\partial F_{P_j;\Gamma}(1);E_{\rho})
-c(n)\vol(\partial F_{P_j;\Gamma}(1))\rk(E_\rho)\\ =&\log T(F_{P_{0,l(j);\Gamma}}(t_{P_j}),\partial F_{P_{0,l(j)};\Gamma}(t_{P_j});E_{\rho})-c(n)\vol(\partial F_{P_{0,l(j)};\Gamma}(t_{P_j}))\rk(E_\rho).
\end{align*}
Applying Lemma \ref{Glucusp}, the 
Proposition follows.

\end{proof}

Although the main topic of this paper is the 
behaviour of cohomological torsion of congruence 
subgroups of Bianchi groups under 
a variation of the local system, 
we now state the following limit 
multiplicity formula for twisted Reidemeister torsion in 
arithmetic hyperbolic congruence towers of arbitrary odd dimension, since 
the latter is an easy corollary.

\begin{corollary}\label{Corollarylimmult}
Let $G:=\SO^0(d,1)$, $d$ odd, and for $q\in\mathbb{N}$ let 
$\Gamma(q):=\ker(G(\Z)\to G(\Z/q\Z)$ denote 
the principal congruence subgroup of level $q$. Let 
$X_q:=\Gamma(q)\backslash \mathbb{H}^d$. Then 
for any $\rho\in\Rep(G)$ with $\rho\neq \rho_{\theta}$ one has
\begin{align*}
\lim_{q\to\infty}\frac{\log\tau_{Eis}(X_q;E_\rho)}{\vol(X_q)}=t^{(2)}_{\mathbb{H}^d}(\rho), 
\end{align*} 
where $t^{(2)}_{\mathbb{H}}(\rho)$ is 
the $L^2$-invariant associated to $\rho$ and $\mathbb{H}^d$ 
which is defined as in \cite{BV}, \cite{MPtors} and which is never zero.  
The same holds for ever sequence $X_{\aL}$ of arithmetic 
hyperbolic 3-manifolds associated to princiapl congruence
subgroups $\Gamma(\aL)$ of Bianchi groups if $N(\aL)\to\infty$.  
\end{corollary}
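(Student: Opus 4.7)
The plan is to apply Proposition \ref{Gluerestated} with base $X_0:=\Gamma(1)\backslash\mathbb{H}^d$ (respectively $X_0:=\Gamma_D\backslash\mathbb{H}^3$ in the Bianchi setting) and to show that, after dividing by $\vol(X_q)$, the main term $\log T_{X_0}(X_q;E_\rho)$ contributes the limit $t^{(2)}_{\mathbb{H}^d}(\rho)$, while both the boundary cohomology and cusp torsion corrections appearing in the proposition are $o(\vol(X_q))$, being controlled by the number of cusps $\kappa(\Gamma(q))$.

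The first step is to invoke the limit multiplicity formula
\[
\lim_{q\to\infty}\frac{\log T_{X_0}(X_q;E_\rho)}{\vol(X_q)}=t^{(2)}_{\mathbb{H}^d}(\rho),
\]
which, under the hypothesis $\rho\neq\rho_\theta$, is the main output of the Selberg-trace-formula analysis carried out in \cite{MPtors1, MPtors}. There one applies the trace formula to the heat-kernel function $k_t^{p,\rho}$ introduced in the proof of Proposition \ref{Compartors}, identifies the identity contribution as the leading term $\vol(X_q)\, t^{(2)}_{\mathbb{H}^d}(\rho)$, and shows that the remaining unipotent, hyperbolic and Eisenstein contributions are $o(\vol(X_q))$ in a principal congruence tower. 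The non-vanishing of $t^{(2)}_{\mathbb{H}^d}(\rho)$ for odd $d$ and $\rho\neq\rho_\theta$ is standard, see \cite{BV}.

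Next, I would estimate the two correction terms. Since $\Gamma(q)$ is neat for $q$ large, each component of $\partial\overline{X_q}$ is a flat $(d-1)$-torus whose twisted cohomology has dimension depending only on $\rho$; hence $\dim H^k(\partial\overline{X_q};E_\rho)=\kappa(\Gamma(q))\cdot c_k(\rho)$ for constants $c_k(\rho)$ independent of $q$, and the boundary correction is $O(\kappa(\Gamma(q)))$. For the cusp correction, the key observation is that
\[
\Theta_l(q):=\log T(F_{P_{0,l};\Gamma(q)}(1),\partial F_{P_{0,l};\Gamma(q)}(1);E_\rho)-c(n)\vol(\partial F_{P_{0,l};\Gamma(q)}(1))\rk(E_\rho)
\]
is, by its construction and by Lemma \ref{Glucusp}, the renormalized torsion of a hyperbolic cusp end; the anomaly subtraction $c(n)\vol(\partial)\rk(E_\rho)$ cancels the leading volume term, and the $\log Y$ dependence in Lemma \ref{Glucusp} shows that only a logarithmic-in-$q$ remainder survives. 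Summed with the multiplicities $\#\{P_j:\gamma_jP_j\gamma_j^{-1}=P_{0,l}\}$ this yields a total cusp correction of size $O(\kappa(\Gamma(q))\log q)$. The elementary estimate $\kappa(\Gamma(q))/\vol(X_q)\to 0$, valid because the covolume grows as $[\Gamma(1):\Gamma(q)]$ while the cusp count grows only as that index divided by the unipotent cusp-stabilizer index, then finishes the proof. The Bianchi case is identical with $N(\aL)$ replacing $q$.

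The main obstacle is the first step: the trace-formula analysis required to establish the limit multiplicity for $T_{X_0}$ along the tower, in particular the uniform control of the Eisenstein contribution as the level grows. This is the substantive content of \cite{MPtors1, MPtors}, and once it is granted the remainder of the argument is a fairly mechanical combination of Proposition \ref{Gluerestated} with polynomial bounds on $\kappa(\Gamma(q))$. A secondary technical point is the compatibility of $T_{X_0}$ with the truncation-dependent torsion $T_{\mathfrak{P}_{\Gamma(q)}}$ for which the literature is sometimes stated; by Proposition \ref{Compartors} they differ by $\sum_{P_j}\log(t_{P_j})$ times $\rho$-dependent constants, and this difference is again absorbed by the $O(\kappa(\Gamma(q))\log q)$ bound above.
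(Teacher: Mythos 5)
Your proposal follows the same overall architecture as the paper's proof---apply Proposition~\ref{Gluerestated}, invoke the limit-multiplicity result for the analytic torsion $T_{X_0}(X_q;E_\rho)$ from \cite{MPtors}, and argue that the boundary and cusp correction terms are $o(\vol(X_q))$---but two steps in your treatment of the cusp correction have genuine gaps.

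First, when you claim that ``the anomaly subtraction cancels the leading volume term, and the $\log Y$ dependence in Lemma~\ref{Glucusp} shows that only a logarithmic-in-$q$ remainder survives,'' you are implicitly assuming that the cusp cross-section lattice $\Lambda_{\Gamma(q)}(P_{0,l})$ is a scalar rescaling of some fixed reference lattice, so that Lemma~\ref{Glucusp} (which controls the dependence on the truncation height $Y$ for a \emph{fixed} lattice) directly yields the $q$-dependence. But in general the \emph{shape} of the torus $\partial F_{P_{0,l};\Gamma(q)}(1)$ changes with $q$, not just its scale, and then there is no a priori reason for the renormalized torsion at $Y=1$ to remain uniformly bounded. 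The paper's proof deals with this via the Deitmar--Hoffmann finiteness result \cite[Lemma~4]{DH}: for each $P_{0,l}$ there is a \emph{finite} set of lattices $\{\Lambda_1(P_{0,l}),\dots,\Lambda_m(P_{0,l})\}$ such that every $\Lambda_{\Gamma(q)}(P_{0,l})$ is a scalar multiple of one of them, after which Lemma~\ref{Glucusp} applies and gives the bound $|\Theta_l(q)| \le C_2 \log[\Gamma_0\cap N_{P_{0,l}} : \Gamma(q)\cap N_{P_{0,l}}]$. Without this finiteness input, your logarithmic bound on $\Theta_l(q)$ is unjustified.

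Second, your concluding summability step is too loose. You obtain a cusp correction of size $O(\kappa(\Gamma(q))\log q)$ and then assert that $\kappa(\Gamma(q))/\vol(X_q)\to 0$ finishes the proof. But $\kappa(\Gamma(q))/\vol(X_q)\to 0$ alone does \emph{not} imply $\kappa(\Gamma(q))\log q/\vol(X_q)\to 0$; you would need a quantitative (e.g., polynomial) rate of decay, which you neither state nor invoke. The paper sidesteps this entirely by a more careful accounting: it uses the exact identity
\[
\#\{P_j\in\mathfrak{P}_{\Gamma(q)}\colon \gamma_jP_j\gamma_j^{-1}=P_{0,l}\} = \#\bigl(\Gamma(q)\backslash\Gamma_0/(\Gamma_0\cap P_{0,l})\bigr)
\]
together with the inequality $\#(\Gamma(q)\backslash\Gamma_0/(\Gamma_0\cap P_{0,l})) \le [\Gamma_0:\Gamma(q)]/[\Gamma_0\cap N_{P_{0,l}}:\Gamma(q)\cap N_{P_{0,l}}]$, so that after normalizing by $[\Gamma_0:\Gamma(q)]\sim\vol(X_q)$ the cusp correction is bounded by
\[
C_2\sum_{l=1}^{\kappa(\Gamma_0)}\frac{\log[\Gamma_0\cap N_{P_{0,l}}:\Gamma(q)\cap N_{P_{0,l}}]}{[\Gamma_0\cap N_{P_{0,l}}:\Gamma(q)\cap N_{P_{0,l}}]},
\]
which tends to zero simply because each unipotent index $[\Gamma_0\cap N_{P_{0,l}}:\Gamma(q)\cap N_{P_{0,l}}]\to\infty$ and $\log x/x\to 0$. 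This requires only the qualitative growth of the unipotent indices and avoids having to quantify the decay of $\kappa(\Gamma(q))/\vol(X_q)$.
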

\begin{proof}
First we assue that $G=\SO^0(d,1)$. Let $\Gamma_0:=G(\Z)$ and 
$X_0:=\Gamma_0\backslash\mathbb{H}^d$. By \cite[Corollary 1.3]{MPtors}, for the analytic torsion 
$T_{X_0}(X_q;E_\rho)$ one has 
\begin{align}\label{thrmMP}
\lim_{q\to\infty}\frac{\log T_{X_0}(X_q;E_\rho)}{\vol(X_q)}
=t_{\mathbb{H}^d}^{(2)}(\rho),
\end{align}
as $q\to\infty$. 
Next it is well-known that for the number $\kappa(\Gamma(q))$ of cusps of $\Gamma(q)$ one has 
\begin{align}\label{numbercusps}
\lim_{q\to\infty}\frac{\kappa(\Gamma(q))}{\vol(X_q)}=0,
\end{align}
see for example \cite[Proposition 8.6]{MPtors}.
For 
$P_{0,l}\in\mathfrak{P}_{\Gamma_0}$ we let $\Lambda_{\Gamma(q)}(P_{0,l}):=\log((\Gamma(q)\cap N_{P_{0,l}})$, which is a lattice in $\mathfrak{n}_{P_{0,l}}$. 
By a result of Deitmar and Hoffmann 
\cite[Lemma 4]{DH}, for each 
$P_{0,l}\in\mathfrak{P}_{\Gamma_0}$, 
there exists a finite set of lattices 
$\mathcal{L}_{P_{0,l}}=\{\Lambda_1(P_{0,l}),\dots,\Lambda_m(P_{0,l})\}$ in $\mathfrak{n}_{P_{0,l}}$ 
such that for each $q\in\mathbb{N}$ the lattice 
$\Lambda_{\Gamma(q)}(P_{0,l})$ 
arises by scaling one of the lattices $\Lambda_j(P_{0,l})$, 
$j=1,\dots,m$, see \cite[Lemma 10.1]{MPtors}. 
For $\Lambda_j(P_{0,l})\in\mathcal{L}_{P_{0,l}}$ we let 
$T_{\Lambda_j(P_{0,l}}:=\Lambda_j(P_{0,l})\backslash\mathfrak{n}_{P_{0,l}}$, equipped with the flat metric \eqref{metr} restricted to $\mathfrak{n}_{P_{0,l}}$ which we shall 
denote by $g_{\Lambda_j(P_{0,l})}$. 
Then we let  $F_{\Lambda_j(P_{0,l})}(1):=[1,\infty)\times\Lambda_j(P_{0,l})$, equipped with 
the metric $y^{-2}(dy^2+g_{\Lambda_j(P_{0,l})})$, $y\in [1,\infty)$.
If $\Lambda_{\Gamma(q)}(P_{0,l})=\mu_{q;P_{0,l}}\Lambda_j(P_{0,l})$ 
with $\Lambda_j(P_{0,l})\in\mathcal{L}_{P_{0,l}}$ and
$\mu_{q;P_{0,l}}\in (0,\infty)$, then by Lemma \ref{Glucusp} one has  
\begin{align*}
&\log T\left(F_{P_{0,l;\Gamma(q)}}(1),\partial F_{P_{0,l;\Gamma(q)}}(1);E_\rho\right)-c(n)\vol(\partial F_{P_{0,l;\Gamma(q)}}(1))\rk(E_\rho)\\=&\log T\left(F_{\Lambda_j(P_{0,l})}(1),\partial F_{\Lambda_j(P_{0,l})}(1);E_\rho\right)-c(n)\vol(\Lambda_j(P_{0,l}))\rk(E_\rho)
\\&+\log{\mu_{q;P_{0,l}}}\sum_{k=0}^{n}(-1)^k\lambda_{\rho,k}\dim(\sigma_{\rho,k}). 
\end{align*}

We can obviously estimate
$
\mu_{q;P_{0,l}}\leq C_1[\Gamma_0\cap N_{P_{0,l}}:\Gamma(q)\cap N_{P_{0,l}}]$, 
where $C_1$ is a constant which is independent of $q$. 
Thus there exists a constant $C_2$ such that for all $q$ one can estimate
\begin{align*}
&\left|\log T\left(F_{P_{0,l;\Gamma(q)}}(1),\partial F_{P_{0,l;\Gamma(q)}}(1);E_\rho\right)-c(n)\vol(\partial F_{P_{0,l;\Gamma(q)}}(1))\rk(E_\rho)\right|\\ &\leq 
C_2\log[\Gamma_0\cap N_{P_{0,l}}:\Gamma(q)\cap N_{P_{0,l}}].
\end{align*}

For each $l\in\{1,\dots,\kappa(\Gamma_0)\}$ one has 
\[
\frac{\#\{P_j\in\mathfrak{P}_{\Gamma(q)}\colon \gamma_jP_j\gamma_j^{-1}=P_{0,l}\}}{[\Gamma_0:\Gamma(q)]}
=\frac{\#(\Gamma(q)\backslash\Gamma_0/(\Gamma_0\cap P_{0,l}))}{[\Gamma_0:\Gamma(q)]}\leq \frac{1}{[\Gamma_0\cap N_{P_{0,l}}:\Gamma(q)\cap N_{P_{0,l}}]}.
\]

Thus for all $q$ one can estimate
\begin{align}\label{lasteq}
&\sum_{l=1}^{\kappa(\Gamma_0)}\frac{\#\{P_j\in\mathfrak{P}_{\Gamma(q)}\colon \gamma_jP_j\gamma_j^{-1}=P_{0,l}\}}
{[\Gamma_0:\Gamma(q)]}\nonumber\\ &\times
|\log T\left(F_{P_{0,l;\Gamma(q)}}(1),\partial F_{P_{0,l;\Gamma}}(1);E_\rho\right)-c(n)\vol(\partial F_{P_{0,l;\Gamma(q)}}(1))\rk(E_\rho)|
\nonumber\\ &\leq C_2 \sum_{l=1}^{\kappa(\Gamma_0)}\frac{\log[\Gamma_0\cap N_{P_{0,l}}:\Gamma(q)\cap N_{P_{0,l}}]}{[\Gamma_0\cap N_{P_{0,l}}:\Gamma(q)\cap N_{P_{0,l}}]}.
\end{align}
Since $\cap_q\Gamma(q)=\{1\}$, $[\Gamma_0\cap N_{P_{0,l}}:\Gamma(q)\cap N_{P_{0,l}}]$ as 
$q\to\infty$ and thus the last 
term in \eqref{lasteq} goes to zero as $q$ tends to infinity. 
Since $\dim H^k(\partial\overline{X_q})=O(\kappa(\Gamma_q))$, 
the corollary follows by applying Proposition \ref{Gluerestated}, 
equation \eqref{thrmMP} and equation \eqref{numbercusps}. For 
a sequence $X_{\aL}$ associated to 
principal congruence subgroups of Bianchi groups, one 
argues in the same way. 
\end{proof}

%%%%%%%%%%%%%%%%%%%%%%%%%%%%%%%%%%%%%%%%%%%%%%%%%%%%%%%%%%%%%%%%%%%%%%%%%%%%%%%%

\section{Analytic and combinatorial torsion for congruence subgroups 
of Bianchi groups}\label{sectorsB}           
\label{tors_cong}
\setcounter{equation}{0}
From now on, we let $G=\Spin(3,1)=\SL_2(\C)$, $K=\Spin(3)=\SU(2)$.
We take $P$ to be the standard parabolic subgroup of $G$ consiting 
of all upper triangular matrices in $G$. Then the unipotent 
radical $N_P$ of $P$ is given by all upper triangular 
matrices whose diagonal entries are one. Moreover, we let $M_P$ and $A_P$ denote the subgroups of $\SL_2(\C)$ defined by
\begin{align}\label{MA}
M_P:=\left\{\begin{pmatrix}e^{i\theta}&0\\ 0&e^{-i\theta}\end{pmatrix}\colon\theta\in[0,2\pi]\right\};\quad
A_P:=\left\{\begin{pmatrix}t&0\\ 0&t^{-1}\end{pmatrix}\colon t\in(0,\infty)\right\}.
\end{align}
Then $P=M_PA_PN_P$. 
Let $\nL_P$ denote the Lie algebra of $N_P$ and let $\aL_P$ denote 
the Lie algebra of $A_P$.
For $k\in\mathbb{Z}$, $\lambda\in\C$ we let $\sigma_k:M_P\to\C$, $\xi_\lambda:A_P\to\mathbb{C}$ be defined 
by 
\[
\sigma_k\left(\begin{pmatrix}e^{i\theta}&0\\ 0&e^{-i\theta}\end{pmatrix}\right):=e^{ik\theta};\quad 
\xi_{\lambda}\left(\begin{pmatrix}t&0\\ 0&t^{-1}\end{pmatrix}\right):=t^{2\lambda}. 
\]
Then the assignment $\lambda\to \xi_\lambda$ is consistent 
wit our earlier identification $\C\cong(\aL_P^*)_\C$.  The group $K_\infty$ acts on 
$\gL/\kL$ by $\Ad$ and we let $K_\infty$ act on
$\aL_P\oplus\nL_P$ by using the canonical 
identification $\gL/\kL\cong\aL_P\oplus\nL_P$.
\newline

Let $F:=\Q(\sqrt{-D})$ be 
an imaginary quadratic number field and let $d_F$ be 
its class number. Let $\mathcal{O}_D$
denote the ring of integers of $F$. 
Let $\mathcal{O}_D^*$ be the group of units 
of $\mathcal{O}_D$, i.e. $\mathcal{O}_D^*=\{\pm 1\}$ for $D\neq 1,3$, 
$\mathcal{O}_D^*=\{\pm 1,\pm\sqrt{-1}\}$ for $D=1$, $\mathcal{O}_D^*=\{\pm
1,\pm\frac{1\pm\sqrt{-3}}{2}\}$ for $D=3$.
Let $\Gamma_D:=\SL_2(\mathcal{O}_D)$. The quotient 
$X_0:=\Gamma_D\backslash\mathbb{H}^3$
is a hyperbolic orbifold of finite volume, see for example \cite{EGM}.
We have 
\begin{align}\label{NP}
\Gamma_D\cap N_{P}=\left\{\begin{pmatrix} 1&b\\ 0&1\end{pmatrix}\colon b\in\mathcal{O}_D\right\}
\end{align}

We fix a set $\mathfrak{P}_{\Gamma_D}=\{P_{0,1},\dots,P_{0,\kappa(\Gamma_D)}\}$ of 
representatives of $\Gamma_D$-cuspidal parabolic subgroups of $G$, 
where we require each parabolic to be $F$-rational and where 
we assume that $P_{0,1}=P$. 
Let $\mathbb{P}^1(F)$ be the one-dimensional projective space of $F$. As usual,
we 
write $\infty$ for the element $[1,0]\in \mathbb{P}^1(F)$.
Then $\SL_2(F)$ acts transitively on $\mathbb{P}^1(F)$ and 
$P$ is the stabilizer of $\infty$. In particular 
for each subgroup $\Gamma$ of $\Gamma_D$ 
one has $\kappa(\Gamma)=\#\left(\Gamma\backslash G/P\right)=\#\left(\Gamma\backslash\mathbb{P}^1(F)\right)$. 
Moreover, by \cite[Chapter 7.2, Theorem 2.4]{EGM}, one has $\kappa(\Gamma_D)=d_F$.
Let $\{\eta_l\colon l=1,\dots,\eta_{d_F}\}$ denote fixed representatives 
of $\Gamma_D\backslash\mathbb{P}^1(F)$ such that $P_{0,l}\in\mathfrak{P}_{\Gamma_0}$ 
is the stabilizer of $\eta_l$ in $\SL_2(\C)$ for each $l$.

For $\aL$ a non-zero ideal of $\mathcal{O}_D$ 
we let $\Gamma(\aL)$ denote 
the principal congruence subroup of $\Gamma_D$ 
of level $\aL$. This group is normal in $\Gamma_D$; 
moreover, for $N(\aL)$ sufficiently large ($N(\aL)\geq 3$ 
in the case $\mathcal{O}_D^*=\pm 1$), the group 
$\Gamma(\aL)$ is neat. We 
shall assume from now on that this is the case. Then 
$X_\aL:=\Gamma(\aL)\backslash \mathbb{H}^3$ is a hyperbolic 3-manifold of finite volume.
According 
to the previous section, for 
$\rho\in\Rep(G)$ with $\rho\neq\rho_\theta$, we can 
define the analytic torsion 
$\log T_{X_0}(X_{\aL};E_\rho)$ of $X_{\aL}$ with coefficients 
in $E_\rho$ with respect to the choice of truncation 
parameters coming from the covering $\pi:X_{\aL}\to X_0$ and 
the choice of $\mathfrak{P}_{\Gamma_D}$.

We shall now simplify the formula in  \eqref{Gluerestated} a bit further 
for the specific manifolds $X_\aL$. Let $\mathfrak{b}$ be a 
non-zero ideal of $\mathcal{O}_D$. 
Taking the 
identification \eqref{NP}, we shall regard $\mathfrak{b}$ 
as a lattice in $\mathfrak{n}_{P}$. We denote this latttice  by $\Lambda_{P}(\mathfrak{b})$ 
and we let $T_{\Lambda_P}(\mathfrak{b}):=\exp(\Lambda_{P}(\mathfrak{b}))\backslash N_P$ 
denote the corresponding torus. 
As above, for $r>0$ we let $F_{\Lambda_{P}(\mathfrak{b})}(r):=[r,\infty)\times T_{\Lambda_{P}(\mathfrak{b})}$ 
denote the corresponding cusp.  
We fix ideals $\mathfrak{s}_l$, $l=1,\dots,d_F$ in 
$\mathcal{O}_D$ which represent the class group of $F$.
Then we have:

\begin{prop}\label{propetbianchi}
There exist $n_{1,\Gamma(\aL)},\dots,n_{d_F,\Gamma(\aL)}\in\mathbb{N}\cup\{0\}$,  
with $n_{1,\Gamma(\aL)}+\dots+n_{d_F,\Gamma(\aL)}=d_F$, and there exist $\mu_{1,\Gamma(\aL)},\dots,\mu_{d_F,\Gamma(\aL)}\in (0,\infty)$ such that 
\begin{align*}
\log\tau_{Eis}(X_\aL;E_\rho)=&\log T_{X_0}(X_\aL;E_\rho)
-
\sum_{l=1}^{d_F}\frac{[\Gamma_D:\Gamma(\aL)]}{\#(\mathcal{O}_D^*) N(\aL)}\biggl(
n_{l,\Gamma(\aL)}\log T\left(F_{\Lambda_{P}(\mathfrak{s}_l)}(1),\partial F_{\Lambda_{P}(\mathfrak{s}_l)}(1);E_\rho\right)\\ &-n_{l,\Gamma(\aL)}c(1)\vol\left(\Lambda_{P}(\mathfrak{s}_l)\right)\rk(E_\rho)
+(\lambda_{\rho,1}-\lambda_{\rho,0})\mu_{l,\Gamma(\aL)}\log{\mu_{l,\Gamma(\aL)}}
\biggr)\\  &-\frac{\kappa(\Gamma(\aL))}{2}
(\lambda_{\rho,0}-\lambda_{\rho,1})
.
\end{align*}
Here the $n_{1,\Gamma(\aL)},\dots,n_{d_F,\Gamma(\aL)}\in\mathbb{N}\cup\{0\}$ and 
the $\mu_{1,\Gamma(\aL)},\dots,\mu_{d_F,\Gamma(\aL)}\in (0,\infty)$ depend on $\Gamma(\aL)$, but not on the representation $\rho$. 
\end{prop}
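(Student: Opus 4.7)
The plan is to specialize Proposition \ref{Gluerestated} to the Bianchi setting and carry out the cusp bookkeeping explicitly. First I apply Proposition \ref{Gluerestated} with $\Gamma = \Gamma(\aL)$ (which is normal in $\Gamma_0 = \Gamma_D$ as a principal congruence subgroup). This yields an expression for $\log\tau_{Eis}(X_\aL;E_\rho)$ as the sum of $\log T_{X_0}(X_\aL;E_\rho)$, a sum over cusps $P_j\in\mathfrak{P}_{\Gamma(\aL)}$ of local cusp-torsion contributions, and a boundary cohomology correction.

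The second step groups cusps of $X_\aL$ by $\Gamma_D$-equivalence class, indexed by $l\in\{1,\dots,d_F\}$. Using the normality of $\Gamma(\aL)$ in $\Gamma_D$, the number $\#\{P_j\in\mathfrak{P}_{\Gamma(\aL)}\colon \gamma_jP_j\gamma_j^{-1}=P_{0,l}\}$ is a coset count equal to $[\Gamma_D:\Gamma(\aL)(\Gamma_D\cap P_{0,l})]$. Since $\Gamma_D\cap P_{0,l}$ is an extension of the unipotent sublattice by $\mathcal{O}_D^*$ and neatness of $\Gamma(\aL)$ kills the torus part so that $\Gamma(\aL)\cap P_{0,l}$ reduces to a sublattice of index proportional to $N(\aL)$ in $\Gamma_D\cap N_{P_{0,l}}$, this count takes the form $n_{l,\Gamma(\aL)}\cdot\frac{[\Gamma_D:\Gamma(\aL)]}{\#(\mathcal{O}_D^*)N(\aL)}$ with $n_{l,\Gamma(\aL)}\in\mathbb{N}\cup\{0\}$ independent of $\rho$. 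The identity $\sum_l n_{l,\Gamma(\aL)}=d_F$ then follows from the standard cusp count $\kappa(\Gamma(\aL))=d_F\cdot [\Gamma_D:\Gamma(\aL)]/(\#(\mathcal{O}_D^*)N(\aL))$ for principal congruence subgroups of Bianchi groups.

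Third, for each $P_j$ above $P_{0,l}$, the actual cross-section lattice (pulled back to $\mathfrak{n}_P$ after conjugation) is a positive rescaling of the reference lattice $\Lambda_P(\mathfrak{s}_l)$ by some factor $\mu_{l,\Gamma(\aL)}\in(0,\infty)$ depending only on $l$ and $\aL$ (not on $\rho$). Applying Lemma \ref{Glucusp} (combined with the isometry argument already used in the proof of Proposition \ref{Gluerestated}), I replace $\log T(F_{P_j;\Gamma(\aL)}(1),\partial F_{P_j;\Gamma(\aL)}(1);E_\rho)-c(n)\vol(\partial F_{P_j;\Gamma(\aL)}(1))\rk(E_\rho)$ by the analogue for $F_{\Lambda_P(\mathfrak{s}_l)}(1)$, picking up the correction $\sum_{k}(-1)^{k+1}\lambda_{\rho,k}\dim(\sigma_{\rho,k})\log\mu_{l,\Gamma(\aL)}$. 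For $G=\SL_2(\C)$ one has $M_P\cong S^1$ and $\dim\sigma_{\rho,k}=1$ for $k\in\{0,1\}$, so this correction reduces to $(\lambda_{\rho,1}-\lambda_{\rho,0})\log\mu_{l,\Gamma(\aL)}$; summing over the cusps in the $l$-th class and redefining $\mu_{l,\Gamma(\aL)}$ to absorb the per-class multiplicity produces the displayed term $(\lambda_{\rho,1}-\lambda_{\rho,0})\mu_{l,\Gamma(\aL)}\log\mu_{l,\Gamma(\aL)}$.

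Finally, the boundary cohomology correction $-\frac{1}{4}\sum_{k=0}^{2}(-1)^k\log|\lambda_{\rho,k}|\dim H^k(\partial\overline{X_\aL};E_\rho)$ simplifies because $\partial\overline{X_\aL}$ is a disjoint union of $\kappa(\Gamma(\aL))$ flat $2$-tori of identical topological type. On each such torus the action of $\pi_1\cong\ZZ^2$ on $V_\rho$ through $\rho|_{N_P}$ is unipotent, so the invariants and coinvariants are one-dimensional, yielding $\dim H^0=\dim H^2=\kappa(\Gamma(\aL))$ and $\dim H^1=2\kappa(\Gamma(\aL))$; substituting and using the relations among the $\lambda_{\rho,k}$ coming from the explicit Kostant eigenvalues in rank one collapses the alternating sum to the final term $-\frac{\kappa(\Gamma(\aL))}{2}(\lambda_{\rho,0}-\lambda_{\rho,1})$. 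The main technical obstacle I expect is the precise identification of the positive constants $\mu_{l,\Gamma(\aL)}$ and verifying that the cusp count really has the displayed form (allowing for non-uniform distribution over ideal classes), so that the slightly unusual $\mu\log\mu$ shape matches; the remaining steps are direct specializations of Propositions \ref{Compartors} and \ref{Gluerestated}, Lemma \ref{Glucusp}, and standard facts on cusps of Bianchi congruence subgroups.
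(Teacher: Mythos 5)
Your overall plan is correct and matches the paper's strategy: apply Proposition \ref{Gluerestated}, compute the cusp counting per $\Gamma_D$-cusp, then use Lemma \ref{Glucusp} and isometries to a set of reference cusp cross-sections. However, there is a genuine gap in your identification of the integers $n_{l,\Gamma(\aL)}$.

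In step 2 you interpret $n_{l,\Gamma(\aL)}$ as an undetermined proportionality factor in the cusp count $\#\{P_j\in\mathfrak{P}_{\Gamma(\aL)}\colon \gamma_jP_j\gamma_j^{-1}=P_{0,l}\}$. But for $\Gamma(\aL)$ normal in $\Gamma_D$ this count is exactly $\frac{[\Gamma_D:\Gamma(\aL)]}{\#(\mathcal{O}_D^*)N(\aL)}$ for \emph{every} $l$ (as the paper checks via $[B_{\eta_l}(\Gamma_D)_{\eta_l}B_{\eta_l}^{-1}\cap N : B_{\eta_l}\Gamma(\aL)_{\eta_l}B_{\eta_l}^{-1}\cap N]=N(\aL)$), so under your definition one would always get $n_l=1$. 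The $n_{l,\Gamma(\aL)}$ in the statement are \emph{not} per-cusp multiplicities at all; they arise from a combinatorial reindexing of which fixed reference ideal $\mathfrak{s}_{l'}$ each $\Gamma_D$-cusp's cross-section lattice is homothetic to. Concretely: writing $B_{\eta_l}=\begin{pmatrix}\alpha_l&\beta_l\\\gamma_l&\delta_l\end{pmatrix}$ and $\mathfrak{u}_l=\langle\gamma_l,\delta_l\rangle$, the conjugated cusp lattice is commensurable to $\aL\mathfrak{u}_l^{-2}$ (not to $\mathfrak{s}_l$, as you assert in step 3); one must determine the unique $\sigma(l)\in\{1,\dots,d_F\}$ and $\tilde\mu_{l,\Gamma(\aL)}\in F^*$ with $\aL\mathfrak{u}_l^{-2}=\tilde\mu_{l,\Gamma(\aL)}\mathfrak{s}_{\sigma(l)}$ and then set $n_{l,\Gamma(\aL)}:=\#\sigma^{-1}(l)$. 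This map $\sigma$ need not be the identity (e.g. if the class group has even order the involved squaring is not surjective), so some $n_l$ can vanish and others exceed $1$, while $\sum_l n_l=d_F$ holds simply because $\sigma$ is a self-map of $\{1,\dots,d_F\}$ — not, as you claim, by equating with the total cusp count. Because your $n_l$ and the paper's count different things, the final formula with the factor $n_{l,\Gamma(\aL)}\log T(F_{\Lambda_P(\mathfrak{s}_l)}(1),\ldots)$ cannot be reached from your reasoning without the reindexing via $\sigma$.

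Your step on the boundary cohomology correction is fine. Also note that in your step 3 the paper's proof actually produces, for each $l$, an additive correction $\log\mu_{l,\Gamma(\aL)}(\lambda_{\rho,1}-\lambda_{\rho,0})$ (no per-class multiplicity is absorbed into $\mu_l$, since the cusps above a given $P_{0,l}$ are all isometric and already counted by the common prefactor); be careful not to overload the role of $\mu_l$.
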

\begin{proof}
Let $\{\eta_l\colon l=1,\dots,\eta_{d_F}\}$ denote fixed representatives 
of $\Gamma_D\backslash\mathbb{P}^1(F)$ such that $P_{0,l}\in\mathfrak{P}_{\Gamma_0}$ 
is the stabilizer of $\eta_l$ in $\SL_2(\C)$ for each $l$. 
For each $l=1,\dots,d_F$ we fix $B_{\eta_{l}}\in\SL_2(F)$ with
$B_{\eta_{l}}\eta_{l}=\infty$. 
Then $P_{0,l}=:P_{\eta_{l}}=B_{\eta_{l}}^{-1}PB_{\eta_{l}}$. Let 
$(\Gamma_D)_{\eta_{l}}=\Gamma_D\cap P_{\eta_l}$ resp. $\Gamma(\aL)_{\eta_{l}}=\Gamma(\aL)\cap P_{\eta_l}$ be the stabilizers of $
\eta_{l}$ 
in $\Gamma_D$ resp. $\Gamma(\aL)$.
One has
\begin{align}\label{eqB}
B_{\eta_{l}}(\Gamma_D)_{\eta_{l}} B_{\eta_{l}}^{-1}=\left\{J(B_{\eta_{l}}(\Gamma_D)_\eta B_{\eta_{l}}^{-1}\cap N)\colon
J\in\left\{\begin{pmatrix}\alpha&0\\ 0&\alpha^{-1}
\end{pmatrix},\alpha\in\mathcal{O}_D^*\right\}\right\}. 
\end{align}
In particular, one has $B_{\eta_{l}}\Gamma(\aL)_{\eta}B_{\eta_{l}}^{-1}\cap P=B_{\eta_{l}}\Gamma(\aL)_{\eta}B_{\eta_{l}}^{-1}\cap N$ for $N(\aL)$ sufficiently large. 
Write $B_{\eta_{l}}=\begin{pmatrix}\alpha_{l} &\beta_l \\ \gamma_l &\delta_l\end{pmatrix}\in
\SL_2(F)$ and let $\mathfrak{u}_l$ be the $\mathcal{O}_D$-module 
generated by $\gamma_l$ and $\delta_l$. Then one has
\begin{align*}
B_{\eta_{l}}(\Gamma_D)_{\eta_{l}} B_{\eta_{l}}^{-1}\cap N=\left\{\begin{pmatrix}1&\omega\\
0&1\end{pmatrix};\: \omega\in \mathfrak{u}_l^{-2}\right\};\:
B_{\eta_{l}}\Gamma(\aL)_{\eta}B_{\eta_{l}}^{-1}\cap N=\left\{\begin{pmatrix}1&\omega'\\
0&1\end{pmatrix};\: \omega'\in \aL\mathfrak{u}_l^{-2}\right\},
\end{align*}
where the first equality is proved in \cite[Chapter 8.2, Lemma 2.2]{EGM} and
where the second equality
can be proved using the same arguments. 
Thus one has 
\begin{align*}
[B_{\eta_{l}}(\Gamma_D)_{\eta_{l}} B_{\eta_{l}}^{-1}\cap N:B_{\eta_{l}}\Gamma(\aL)_{\eta_{l}} B_{\eta_{l}}^{-1}\cap
N]=N(\aL).
\end{align*}
Thus by \eqref{eqB}, for each $l=1,\dots,d_F$ and $N(\aL)$ suffciently
large 
one has $[(\Gamma_D)_{\eta_{l}}:\Gamma(\aL)_{\eta_{l}}]=\#(\mathcal{O}_D^*)N(\aL)$ and so 
one has 
\begin{align*}
\#\{P_j\in\mathfrak{P}_{\Gamma(\aL)}\colon \gamma_jP_j\gamma_j^{-1}=P_{0,l}\}=\frac{[\Gamma_D:\Gamma(\aL)]}{\#(\mathcal{O}_D^*) N(\aL)}
\end{align*}

For each $l$ there is a constant $\kappa_l>0$ which depends only 
on $B_{\eta_l}$ such that one has a canonical isometry $\iota: F_{P_{0,l};\Gamma(\aL)}(1)\cong F_{\Lambda_{P}(\kappa_l\aL\mathfrak{u}_l^{-2})}(1)$ 
which induces an isomtery $\iota^* E_\rho|_{F_{\Lambda_{P}(\kappa_l\aL \mathfrak{u}_l^{-2})}(1)}\cong E_\rho|_{F_{P_{0,l};\Gamma(\aL)}(1)}$. Next, there exists a unique map $\sigma$ from $\{1,\dots,d_F\}$ into itself
such that for each $l=1,\dots,d_F$ there 
exists a $\tilde{\mu}_{l,\Gamma(\aL)}\in F^*$ with $\aL\mathfrak{u}_{l}^{-2}=\tilde{\mu}_{l,\Gamma(\aL)}\mathfrak{s}_{\sigma(l)}$. 
We let $\mu_{l,\Gamma(\aL)}:=\kappa_l|\tilde{\mu}_{l,\Gamma(\aL)}|$.
Then it follows that there is a canonical isometry $\iota:F_{P_{0,l};\Gamma(\aL)}(1)\cong F_{\Lambda_{P}(\mathfrak{s}_{\sigma(l)})}(\mu_{l,\Gamma(\aL)}^{-1})$ 
which extends to an isometry $\iota^*E_\rho|_{F_{\Lambda_{P}(\mathfrak{s}_{\sigma(l)})}(\mu_{l,\Gamma(\aL)}^{-1})}\cong E_\rho|_{F_{P_{0,l};\Gamma(\aL)}(1)}$. 
Thus it follows from Lemma \ref{Glucusp} that
\begin{align*}
&\log T(F_{P_{0,l};\Gamma(\aL)}(1),\partial F_{P_{0,l};\Gamma(\aL)}(1);E_\rho)-c(1)
\vol(\partial F_{P_{0,l};\Gamma(\aL)}(1))\rk(E_\rho)\\
=&\log T\left(F_{\Lambda_{P}(\mathfrak{s}_{\sigma(l)})}(1),\partial F_{\Lambda_{P}(\mathfrak{s}_{\sigma(l)})}(1);E_\rho\right)-c(1)\vol\left(\partial F_{\Lambda_{P}(\mathfrak{s}_{\sigma(l)})}(1)\right)
+\log{\mu_{l,\Gamma(\aL)}}(\lambda_{\rho,1}-\lambda_{\rho,0}).
\end{align*} 
For each $l=1,\dots,d_F$ we let $n_{l,\Gamma(\aL)}:=\#\{\sigma^{-1}(l)\}$.
Then, since $\dim(\sigma_{\rho,k})=1$ in the case of $G=\SL_2(\C)$, the 
Proposition follows from Proposition \ref{Gluerestated}.
\end{proof}

We let $\rho_1$ denote the standard representation 
of $\SL_2(\C)$ on $\mathbb{C}^2$ and for $m\in\mathbb{N}$ we 
consider the representation $\rho_m:=\Symm^m\rho_1$ on $V(m):=\Symm^m\mathbb{C}^2$. 
We can now deduce the following result on the 
growth of the torsion $\mathcal{\tau}_{Eis}(X_\aL;E(\rho_m))$ if $m\to\infty$. .

\begin{prop}\label{asRT}
Let $\aL$ and $\aL_0$ be two ideals in $\mathcal{O}_D$ such that 
$\Gamma(\aL)$ and $\Gamma(\aL_0)$ are neat and such that, 
in the notation of the previous proposition, one has 
$n_{l,\Gamma(\aL)}=n_{l,\Gamma(\aL_0)}$ for each 
$l=1,\dots,d_F$. 
Then one has
\begin{align*}
&\frac{[\Gamma_D:\Gamma(\aL_0)]}{\#(\mathcal{O}_D^*) N(\aL_0)}\log\tau_{Eis}(X_{\aL};E(\rho_m))- 
\frac{[\Gamma_D:\Gamma(\aL)]}{\#(\mathcal{O}_D^*) N(\aL)}\log\tau_{Eis}(X_{\aL_0};E(\rho_m))
\\=&-\frac{[\Gamma_D:\Gamma(\aL_0)][\Gamma_D:\Gamma(\aL)]}{\#\mathcal{O}_D^*\pi}\left(\frac{1}{N(\aL_0)}-\frac{1}{N(\aL)}\right)\vol(\Gamma_D\backslash\mathbb{H}^3)m^2+O(m\log{m}),
\end{align*}
as $m\to\infty$. 
\end{prop}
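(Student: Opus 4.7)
The plan is to apply Proposition \ref{propetbianchi} to both $\aL$ and $\aL_0$, weight each by the \emph{opposite} prefactor --- multiplying the formula for $\log\tau_{Eis}(X_\aL;E_{\rho_m})$ by $\frac{[\Gamma_D:\Gamma(\aL_0)]}{\#\mathcal{O}_D^* N(\aL_0)}$ and the formula for $\log\tau_{Eis}(X_{\aL_0};E_{\rho_m})$ by $\frac{[\Gamma_D:\Gamma(\aL)]}{\#\mathcal{O}_D^* N(\aL)}$ --- and then subtract. The point of this symmetric weighting is that in the resulting identity the coefficient in front of each cuspidal Reidemeister-torsion bracket
\[
n_{l,\Gamma(\cdot)}\bigl[\log T(F_{\Lambda_{P}(\mathfrak{s}_l)}(1),\partial F_{\Lambda_{P}(\mathfrak{s}_l)}(1);E_{\rho_m})-c(1)\vol(\Lambda_{P}(\mathfrak{s}_l))\rk(E_{\rho_m})\bigr]
\]
becomes the fully symmetric quantity $\frac{[\Gamma_D:\Gamma(\aL)][\Gamma_D:\Gamma(\aL_0)]}{(\#\mathcal{O}_D^*)^2 N(\aL)N(\aL_0)}$, which is identical on both sides. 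Under the hypothesis $n_{l,\Gamma(\aL)}=n_{l,\Gamma(\aL_0)}$, these intrinsic cusp contributions --- whose $m$-dependence is precisely the piece we have no direct control over --- cancel in the difference summand by summand.

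What then survives on the right-hand side is (a) the weighted analytic-torsion difference
\[
\frac{[\Gamma_D:\Gamma(\aL_0)]}{\#\mathcal{O}_D^* N(\aL_0)}\log T_{X_0}(X_\aL;E_{\rho_m})-\frac{[\Gamma_D:\Gamma(\aL)]}{\#\mathcal{O}_D^* N(\aL)}\log T_{X_0}(X_{\aL_0};E_{\rho_m}),
\]
(b) the cusp-rescaling terms proportional to $(\lambda_{\rho_m,1}-\lambda_{\rho_m,0})\mu_{l,\cdot}\log\mu_{l,\cdot}$, and (c) the boundary-cohomology contributions $\frac{\kappa(\Gamma(\cdot))}{2}(\lambda_{\rho_m,0}-\lambda_{\rho_m,1})$. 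For $\rho_m=\Symm^m\rho_1$ the weights $\lambda_{\rho_m,k}$ grow linearly in $m$, whereas $\mu_{l,\Gamma(\aL)},\mu_{l,\Gamma(\aL_0)},\kappa(\Gamma(\aL)),\kappa(\Gamma(\aL_0))$ are constants independent of $m$; hence (b) and (c) together contribute only $O(m)$, which is absorbed into the $O(m\log m)$ error of the statement.

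For the leading behavior of (a) I invoke the fixed-manifold asymptotic for regularized analytic torsion under a variation of the local system from \cite{MPtors1},
\[
\log T_{X_0}(X_\aL;E_{\rho_m})=\vol(X_\aL)\,t^{(2)}_{\mathbb{H}^3}(\rho_m)+O(m\log m),
\]
together with the Bergeron--Venkatesh evaluation $t^{(2)}_{\mathbb{H}^3}(\rho_m)=-\frac{m^2}{\pi}+O(m)$ and the identity $\vol(X_\aL)=[\Gamma_D:\Gamma(\aL)]\,\vol(\Gamma_D\backslash\mathbb{H}^3)$. Substituting, factoring out $\frac{[\Gamma_D:\Gamma(\aL_0)][\Gamma_D:\Gamma(\aL)]\vol(\Gamma_D\backslash\mathbb{H}^3)}{\#\mathcal{O}_D^*}$, and collecting $\frac{1}{N(\aL_0)}-\frac{1}{N(\aL)}$ as the common difference, the $m^2$-coefficient of (a) collapses to exactly $-\frac{[\Gamma_D:\Gamma(\aL_0)][\Gamma_D:\Gamma(\aL)]\vol(\Gamma_D\backslash\mathbb{H}^3)}{\#\mathcal{O}_D^*\pi}\bigl(\frac{1}{N(\aL_0)}-\frac{1}{N(\aL)}\bigr)$, with all remaining contributions of size $O(m\log m)$.

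The main obstacle is therefore not this (essentially algebraic) comparison but rather the justification of the $O(m\log m)$ error in the analytic-torsion asymptotic for fixed $\Gamma(\aL)$ as $m\to\infty$ --- that is, the substantive trace-formula estimates of \cite{MPtors1,MPtors}. Once that input is in hand, the proposition is a bookkeeping consequence of Proposition \ref{propetbianchi} made clean by the fortunate symmetry of the chosen weighting together with the matching-$n_l$ hypothesis.
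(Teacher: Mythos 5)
Your proposal is correct and follows the same route as the paper: apply Proposition \ref{propetbianchi} to both $\aL$ and $\aL_0$ with the symmetric cross-weighting, observe that the $n_{l,\Gamma(\cdot)}$-weighted cusp brackets cancel under the matching hypothesis, and then feed in the $\log T_{X_0}(X;E_{\rho_m})=-\frac{1}{\pi}\vol(X)m^2+O(m\log m)$ asymptotic from \cite{MPtors1}. The paper's proof is just a terser version of this; it also records $\lambda_{\rho_m,0}=(m+1)/2$, $\lambda_{\rho_m,1}=m/2$, so the residual cusp and boundary terms you estimate as $O(m)$ are in fact $O(1)$ --- a minor looseness on your part that does not affect the conclusion.
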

\begin{proof}
In the notation of \cite{MPtors1}, the representation 
$\rho_m$ is the representation of highest weight 
$(m/2,m/2)$. 
If $\Gamma$ is a neat finite index subgroup of $\Gamma_D$ and if we let $X:=\Gamma\backslash\mathbb{H}^3$, then 
specializing \cite[Theorem 1.1]{MPtors1} to the 
present situation, we obtain  
\begin{align*}
\lim_{m\to\infty}\log T_{X_0}(X;E_{\rho_m})=-\frac{1}{\pi}\vol(X)m^2+O(m\log{m}),
\end{align*}
as $m\to\infty$. Moreover, one has $\lambda_{\rho_m,0}=(m+1)/2$ and $\lambda_{\rho_m,1}=m/2$. 
Thus the proposition follows from the previous Proposition \ref{propetbianchi}. 
\end{proof}
There is a constant $C_1'(\Gamma)$ such that $\Gamma(\aL)$ 
is neat for all ideals $\aL$ of $\mathcal{O}_D$ with $N(\aL)\geq C_1'(\Gamma)$. If 
$\mathcal{O}_D^*=\{\pm 1\}$, one has $C_1'(\Gamma)=3$.
Next we remark that by the requirement $n_{1,\Gamma(\aL)}+\dots+n_{d_F}(\aL)=d_F$ there 
is a finite set $\mathcal{A}$ of ideals of $\mathcal{O}_D$ such 
that $N(\aL_0)\geq C_1'(\Gamma)$ for each $\aL_0\in\mathcal{A}$ and  
such that for each non-zero ideal $\aL$ of $\mathcal{O}_D$ with 
$N(\aL)\geq C_1'(\Gamma)$ there 
is an $\aL_0\in\mathcal{A}$ such that  $n_{l,\Gamma(\aL)}=n_{l,\Gamma(\aL_0)}$ for each 
$l=1,\dots,d_F$. We let $\tilde{C}_1(\Gamma):=\max\{N(\aL_0)\colon\aL_0\in\mathcal{A}\}$ 
and we let $C_1(\Gamma):=\max\{\tilde{C}_1(\Gamma),C_1'(\Gamma)\}$.

%%%%%%%%%%%%%%%%%%%%%%%%%%%%%%%%%%%%%%%%%%%%%%%%%%%%%%%%%%%%%%%%%%%%%%%%%%%%%%%%

\section{Torsion in cohomology and Reidemeister torsion}\label{secTc}
\label{comp_tors}
\setcounter{equation}{0}
We keep the notation of the preceding section. 
Let $\Lambda(m):=\Symm^m\mathcal{O}_D$. Then $\Lambda(m)$ is a lattice in 
$V(m)$ which is preserved by $\Gamma(D)$; we denote 
the representation of $\Gamma(D)$ on $\Aut_{\mathbb{Z}}(\Lambda(m))$ 
by $\rho_{m;\mathbb{Z}}$. 
Let $\check{\Lambda}(m):=\Hom_{\mathbb{Z}}(\Lambda(m),\mathbb{Z})$ 
denote the dual lattice of $\Lambda(m)$ and let $\check{\rho}_{m;\mathbb{Z}}$ denote 
the contragredient representations of $\Gamma(D)$ on $\check{\Lambda}_m$. 
We let $\bar{\rho}_{\mathbb{Z};m}:=\rho_{\mathbb{Z};m}\oplus\check{\rho}_{\mathbb{Z};m}$ 
denote the corresponding integral representation of $\Gamma(D)$ on $\bar{\Lambda}(m):=\Lambda(m)\oplus\check{\Lambda}(m)$. 
We let $\bar{\rho}_m$ denote the corresponding 
real representation of $\Gamma$ on $\bar{V}(m):=V(m)\oplus V(m)^*$. 
Over $\R$, the representation 
$\rho_m$ is self-dual, i.e. one has 
$\bar{\rho}_m\cong\rho_m\oplus\rho_m$. In particular, no irreducible summand 
of $\bar{\rho}_m$ is invariant 
under the Cartan involution $\theta$. 
Let $\Gamma$ be any neat, finite 
index subgroup of $\Gamma(D)$. Then we regard each $\bar{\Lambda}(m)$ 
as a $\Gamma$-module. Let $H^*(\Gamma,\bar{\Lambda}(m))$ 
be the cohomology groups of $\Gamma$ with coefficients in $\bar{\Lambda}(m)$. 
These groups are finitely generated abelian groups and thus they admit a decomposition
\[                                                         
H^*(\Gamma,\bar{\Lambda}(m))=H^*(\Gamma,\bar{\Lambda}(m))_{free}\oplus H^*(\Gamma,\bar{\Lambda}(m))_{tors},
\]
where $H^*(\Gamma,\bar{\Lambda}(m))_{free}$ are free, finite-rank $\Z$-modules 
and where $H^*(\Gamma,\bar{\Lambda}(m))_{tors}$ are finite abelian groups. 
Let $X:=\Gamma\backslash\mathbb{H}^3$ and let $\overline{X}$ denote 
the Borel-Serre compactification of $X$. The latter is homotopy equivalent 
to $X$. In particular, $\bar{\rho}_{\mathbb{Z},m}$ defines an integral 
local system $\mathcal{L}(m)$ over $\overline{X}$. Since the universal covering 
of $X$ resp. $\overline{X}$ is contractible, one has 
a canonical isomorphism $H^*(\Gamma,\bar{\Lambda}(m))\cong H^*(\overline{X},\mathcal{L}(m))$. 
Let $E_{\bar\rho_m}$ denote the flat vector bundle over $X$ resp. $\overline{X}$ 
corresponding to $\bar\rho_m$ and let 
$H^*(\overline{X};E_{\bar\rho_m})$ denote 
the singular cohomology groups of $\overline{X}$ with 
coefficients in $E_{\bar\rho_m}$. Then $H^*(\Gamma,\bar{\Lambda}(m))_{free}$ is a lattice in $H^*(\overline{X};E_{\bar\rho_m})$.

 We now recall the description of the canonical bases in the 
cohomology $H^*(\overline{X};E_{\bar{\rho}_m})$ which are used 
to define the Reidemeister torsion $\tau_{Eis}(X;E_{\bar{\rho}_m})$. 
For more details, we refer to \cite{Pf}.  We shall use the 
notation of \cite[section 8]{Pf}. Let $\partial\overline{X}$ denote 
the boundary of $\overline{X}$. If $\iota:\partial\overline{X}\to X$ 
denotes the inclusion map, the corresponding maps $\iota_k^*:H^k(\overline{X};E_{\bar{\rho}_m})\to H^k(\partial\overline{X};_{\bar{\rho}_m})$ in cohomology are injective for  $k\in\{1,2\}$, see \cite[Lemma 8.3]{Pf}.  
Thus the cohomology $H^*(\overline{X};E_{\bar{\rho}_m})$ is 
completely described in terms of Eisenstein cohomolgy due to Harder \cite{Ha} .  
For each $P_j\in\mathfrak{P}_\Gamma$ let $\mathcal{H}^k(\mathfrak{n}_{P_j};\bar{V}(m))$ denote 
the harmonic forms of degree $k$ in the Lie algebra cohomology complex of $\mathfrak{n}_{P_j}$ 
with coefficients in $\bar{V}(m)$. 
We equip this space with the inner product 
induced by the restriction of the inner product \eqref{metr} on $\gL$ to
$\nL_{P_j}$ and the admissible 
inner product on $\bar{V}(m)$.    
Let $\sigma^{-}_{\bar{\rho}_m,1}\in\hat{M}_P$ and $\lambda_{\bar{\rho}_m,1}^-\in(-\infty,0)$
resp. $\sigma_{\bar{\rho}_m,2}\in\hat{M}_P$  and  
$\lambda_{\bar{\rho}_m,2}\in(-\infty,0)$
be defined as in \cite[section 6]{Pf}. 
In the present situation, we have $\sigma^{-}_{\bar{\rho}_m,1}=\sigma_{-m-2}$ and 
$\lambda_{\bar{\rho}_m,1}^-=-m/2$ resp. $\sigma_{\bar{\rho}_m,2}=\sigma_{-m}$  and  
$\lambda_{\bar{\rho}_m,2}=-(m+1)/2$.
By the finite-dimensional Hodge thereom and a theorem of van Est one has a canonical 
isomorphism
\begin{align}\label{VE}
H^k(\partial\overline{X};E_{\bar{\rho}_m})\cong\bigoplus_{P_j\in\mathfrak{P}_{\Gamma}}\mathcal{H}^k(\mathfrak{n}_{P_j},\bar{V}(m)).
\end{align}
In degree $1$, Kostant's theorem gives a splitting 
$\mathcal{H}^1(\mathfrak{n}_{P_j},\bar{V}(m))=\mathcal{H}^1(\mathfrak{n}_{P_j},\bar{V}(m))_{-}\oplus
\mathcal{H}^1(\mathfrak{n}_{P_j},\bar{V}(m))_{+}$. 
We let
\[
H^1(\partial\overline{X};E_{\bar{\rho}_m})_{\pm}:=\bigoplus_{j=1}^{\kappa(\Gamma)}\mathcal{H}^1(\mathfrak{n}_{P_j},\bar{V}(m))_{\pm}.
\]
Then out of the constant term matrix of the Eisenstein series one obtains 
a map 
\begin{align*}
\underline{\mathbf{C}}
(\sigma_{-m-2},m/2):H^1(\partial\overline{X};E_{\bar{\rho}_m})_{-}\to H^1(\partial\overline{X};E_{\bar{\rho}_m})_{+}.
\end{align*}
We have $\dim\mathcal{H}^2(\nL_{P_j};\bar{V}(m))=2\dim\mathcal{H}^2(\nL_{P_j};V(m))=2$ 
as well as $\dim\mathcal{H}^1(\nL_{P_j};\bar{V}(m))_{\pm}=2\dim\mathcal{H}^1(\nL_{P_j};V(m))_{\pm}=2$. 
For each $P_j\in\mathfrak{P}_\Gamma$ let
let $\Phi_{i,j}^1$ be an orthonormal basis of
$\mathcal{H}^1(\nL_{P_j};\bar{V}(m))_{-}$ and let $\Phi_{i,j}^2$ be an orthonormal basis of $\mathcal{H}^2(\nL_{P_j};\bar{V}(m))$.
Then the set  
\[
\mathcal{B}^1(\Gamma;\bar\rho_m):=\{E(\Phi_{i,j}^1,m/2)\colon
j=1,\dots,\kappa(\Gamma)\colon i=1,\dots,\dim
\mathcal{H}^n(\nL_{P_j};\bar{V}(m))_{-}\}
\]
forms a basis of $H^1(\overline{X};E_{\bar{\rho}_m})$, 
where $E(\Phi_{i,j}^n,m/2)$ denotes 
again the Eisenstein series evaluated 
at $m/2$ as in \cite[(7.3)]{Pf} which 
is regular at this point by \cite[Proposition 8.4]{Pf}. 
Moreover, in degree 2 
the set 
\[
\mathcal{B}^2(\Gamma;\bar\rho_m):=\{E(\Phi_{i,j}^2,(m+1)/2)\colon
j=1,\dots,\kappa(\Gamma)\colon i=1,\dots,\dim
\mathcal{H}^2(\nL_{P_j};\bar{V}(m))\}
\]
forms a basis of $H^2(\overline{X};E_{\bar{\rho}_m})$. 
Here $E(\Phi_{i,j}^2,(m+1)/2)$ denotes 
the Eisenstein series associated to $\Phi_{i,j}^2$ 
evaluated at $(m+1)/2$ which is again regular 
at this point. 
For $\Phi_{i,j}^1\in\mathcal{H}^1(\nL_{P_j};\bar{V}(m))_{-} $ one has 
\begin{align}\label{restrictionI}
\iota_1^* E(\Phi_{i,j}^1,m/2)=\Phi_{i,j}^1+
\underline{\mathbf{C}}(\sigma_{-m-2},m/2)\Phi_{i,j}^1
\end{align}
and for $\Phi_{i,j}^2\in\mathcal{H}^2(\nL_{P_j};\bar{V}(m))$ one has
\begin{align}\label{restrictionII}
\iota_2^*E(\Phi_{i,j}^2,(m+1)/2)=\Phi_{i,j}^2.
\end{align} 
By the definition of \cite[section 9]{Pf}, the Reidemeister 
torsion $\tau_{Eis}(X;E_{\bar{\rho}_m})$ is taken 
using the above bases in the cohomology. 
In particular, following Bergeron and Venkatesh \cite{BV}, the size of the groups $H^*(\Gamma,\bar{\Lambda}(m))_{tors}$ is related
to the combinatorial torsion $\tau_{Eis}(X;E_{\bar{\rho}_m})$ in the 
following way.
For 
$k\in\{1,2\}$ we let 
$\vol_{\mathcal{B}_k(\Gamma;\bar{\rho}_m)}(H^k(\Gamma;\bar{\Lambda}(m))_{free})$
denote the covolume of the lattice $H^k(\Gamma;\bar{\Lambda}(m))_{free}$
in $H^k(\overline{X};E(\bar\rho_m))$ with respect to 
the inner product which arises by taking the basis $\mathcal{B}_k(\Gamma;\bar{\rho}_m)$ as an orthonormal basis.
Then by \cite[section 2.2]{BV} one has
\begin{align}\label{eqBV}
\log\tau_{Eis}(X;E_{\bar\rho_m})=&\log|H^1(\Gamma;\bar{\Lambda}(m))_{tors}|-
\log|H^2(\Gamma;\bar{\Lambda}(m))_{tors}|\nonumber\\ -&\log\vol_{\mathcal{B}^1(\Gamma;\bar{\rho}_m)}(H^1(X,\bar{\Lambda}(m))_{free}) +\log\vol_{\mathcal{B}^2(\Gamma;\bar{\rho}_m)}(H^2(X,\bar{\Lambda}(m))_{free})
\end{align}

In the notation of \cite{BV}, the term in the second line of \eqref{eqBV} is 
called the regulator. We need to study the regulator 
further. We denote by $H^*(\partial\overline{X};\bar\Lambda(m)$ resp. $H^*(\partial\overline{X};E(\bar\rho_m))$ the cohomology of $\partial\overline{X}$ with coefficients 
in $\mathcal{L}(m)$ resp. $E(\bar\rho_m)$ restricted to $\partial\overline{X}$.  
Again there is a decomposition 
$H^*(\partial\overline{X};\bar{\Lambda}(m))=H^*(\partial\overline{X};\bar{\Lambda}(m))_{free}\oplus H^*(\partial\overline{X};\bar{\Lambda}(m))_{tors}$ and $H^*(\partial\overline{X};\bar{\Lambda}(m))_{free}$ is a lattice in $H^*(\partial\overline{X};E_{\bar\rho_m})$. It is easy to see that $
H^1(\partial\overline{X};\bar{\Lambda}(m))_{\pm}:=H^1(\partial\overline{X};\bar{\Lambda}(m))_{free}\cap H^1(\partial\overline{X};E_{\bar{\rho}_m})_{\pm}$ 
are $\Z$-lattices in $H^1(\partial\overline{X};E_{\bar{\rho}_m})_{\pm}$. 
Thus $H^1(\partial\overline{X};\bar{\Lambda}(m))_-\oplus 
H^1(\partial\overline{X};\bar{\Lambda}(m))_+$ is a $\Z$-sublattice of finite 
index in $H^1(\partial\overline{X};\bar{\Lambda}(m))_{free}$. 
Moreover, with respect to the lattices $H^1(\partial\overline{X};\bar{\Lambda}(m))_\pm$, the matrix 
 $\underline{\mathbf{C}}(\sigma_{-m-2},m/2)$ is $\Q$-rational. 
In the present case where there is no interior cohomology, this 
just follows from \eqref{restrictionI} and the fact that the map $\iota_1^*$ is 
defined over $\Q$.  
In other words, there exists $N\in\mathbb{N}$ such 
that $N\cdot\underline{\mathbf{C}}(\sigma_{-m-2},m/2)$ 
defines a map
\begin{align}\label{denominator}
N\cdot\underline{\mathbf{C}}(\sigma_{-m-2},m/2):
H^1(\partial\overline{X};\bar{\Lambda}(m))_-\to
H^1(\partial\overline{X};\bar{\Lambda}(m))_+.
\end{align}

We let $d_{Eis,\mathbf{C}}(\Gamma,\bar{\rho}_m)$ denote the smallest $N\in\mathbb{N}$ such 
that \eqref{denominator} holds.
We point out that we do not treat the denominator 
of an Eisenstein cohomology class but only the denominator 
of the constant term matrix. However, this is sufficient 
for our purposes due to the following Lemma.

\begin{lem}\label{Lemreg1}
One can estimate 
\begin{align*}
&\log\vol_{\mathcal{B}^1(\Gamma;\bar{\rho}_m)}(H^1(X,\bar{\Lambda}(m)))_{free}\leq \log|H^2(X,\bar{\Lambda}(m)|_{tors}
+\kappa(\Gamma)\log\left(d_{Eis,\mathbf{C}}(\Gamma,\bar{\rho}_m)\right) 
\\ +&\kappa(\Gamma)\log\left([H^1(\partial\overline{X};\bar{\Lambda}(m))_{free}:H^1(\partial\overline{X};\bar{\Lambda}(m))_-\oplus 
H^1(\partial\overline{X};\bar{\Lambda}(m))_+]\right). 
\end{align*}
\end{lem}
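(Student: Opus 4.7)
The plan is to transport the problem from the interior to the boundary cohomology via $\iota_1^*$ and then split the resulting covolume into a torsion piece, controlled by $|H^2(X;\bar{\Lambda}(m))_{tors}|$ using Poincar\'e--Lefschetz duality for the self-dual lattice $\bar{\Lambda}(m)$, and a ``denominator'' piece, controlled by $d:=d_{Eis,\mathbf{C}}(\Gamma,\bar{\rho}_m)$ and by the index $I:=[H^1(\partial\overline{X};\bar{\Lambda}(m))_{free}:H^1(\partial\overline{X};\bar{\Lambda}(m))_-\oplus H^1(\partial\overline{X};\bar{\Lambda}(m))_+]$. Throughout, write $L_1:=H^1(X;\bar{\Lambda}(m))_{free}$ and $L_2:=H^1(\partial\overline{X};\bar{\Lambda}(m))_{free}$.

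First, by \eqref{restrictionI} together with the injectivity of $\iota_1^*$ in degree one (\cite[Lemma 8.3]{Pf}), the composition of $\iota_1^*$ with the orthogonal projection $p$ onto $H^1(\partial\overline{X};E_{\bar{\rho}_m})_-$ sends $E(\Phi_{i,j}^1,m/2)$ to $\Phi_{i,j}^1$, and is therefore an isomorphism which carries the orthonormal basis $\mathcal{B}^1(\Gamma;\bar{\rho}_m)$ bijectively onto the orthonormal basis $\{\Phi_{i,j}^1\}$ of $H^1(\partial\overline{X};E_{\bar{\rho}_m})_-$. Consequently
\[
\vol_{\mathcal{B}^1(\Gamma;\bar{\rho}_m)}(L_1)=\vol_{\{\Phi_{i,j}^1\}}\bigl(p\iota_1^*(L_1)\bigr).
\]

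Second, let $L_1^{\mathrm{sat}}$ be the saturation of $\iota_1^*(L_1)$ inside $L_2$, so that $L_1^{\mathrm{sat}}/\iota_1^*(L_1)=(L_2/\iota_1^*(L_1))_{tors}$. The long exact sequence of the pair $(\overline{X},\partial\overline{X})$,
\[
H^1(\overline{X};\bar{\Lambda}(m))\xrightarrow{\iota_1^*}H^1(\partial\overline{X};\bar{\Lambda}(m))\to H^2(\overline{X},\partial\overline{X};\bar{\Lambda}(m))\to H^2(\overline{X};\bar{\Lambda}(m)),
\]
embeds $L_1^{\mathrm{sat}}/\iota_1^*(L_1)$ into $H^2(\overline{X},\partial\overline{X};\bar{\Lambda}(m))_{tors}$, which Poincar\'e--Lefschetz duality (using that $\bar{\Lambda}(m)$ is self-dual over $\Z$) identifies with $H^2(\overline{X};\bar{\Lambda}(m))_{tors}=H^2(X;\bar{\Lambda}(m))_{tors}$. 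Since $p$ is injective on $L_1^{\mathrm{sat}}\otimes\R$, we obtain
\[
\vol_{\{\Phi_{i,j}^1\}}\bigl(p\iota_1^*(L_1)\bigr)=\bigl|L_1^{\mathrm{sat}}/\iota_1^*(L_1)\bigr|\cdot\vol_{\{\Phi_{i,j}^1\}}\bigl(p(L_1^{\mathrm{sat}})\bigr)\leq\bigl|H^2(X;\bar{\Lambda}(m))_{tors}\bigr|\cdot\vol_{\{\Phi_{i,j}^1\}}\bigl(p(L_1^{\mathrm{sat}})\bigr).
\]

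Third, $L_1^{\mathrm{sat}}$ is the graph in $L_2$ of the restriction of $\underline{\mathbf{C}}:=\underline{\mathbf{C}}(\sigma_{-m-2},m/2)$ to $p(L_1^{\mathrm{sat}})$, so that $p(L_1^{\mathrm{sat}})=\{\Phi\,:\,(\Phi,\underline{\mathbf{C}}\Phi)\in L_2\}$. Any $\Phi\in L_2^-$ with $\underline{\mathbf{C}}\Phi\in L_2^+$ automatically lies in $p(L_1^{\mathrm{sat}})$, since $(\Phi,\underline{\mathbf{C}}\Phi)\in L_2^-\oplus L_2^+\subset L_2$. By the definition of $d$ one has $d\underline{\mathbf{C}}(L_2^-)\subset L_2^+$, while the definition of $I$ controls the gap between $L_2^-\oplus L_2^+$ and $L_2$. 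Exploiting the block structure of $\underline{\mathbf{C}}$ along the cusp decomposition $H^1(\partial\overline{X};E_{\bar{\rho}_m})_\pm=\bigoplus_{P_j\in\mathfrak{P}_\Gamma}\mathcal{H}^1(\nL_{P_j};\bar{V}(m))_\pm$ and choosing integral bases of $L_2^\pm$ adapted to this decomposition, each of the $\kappa(\Gamma)$ cusps contributes one factor of $d$ and one factor of $I$ to the covolume, yielding
\[
\vol_{\{\Phi_{i,j}^1\}}\bigl(p(L_1^{\mathrm{sat}})\bigr)\leq d^{\kappa(\Gamma)}\cdot I^{\kappa(\Gamma)}.
\]
Combining with the second step proves the lemma.

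The key technical obstacle is the third step: the exponents $\kappa(\Gamma)$ (rather than a dimension-dependent multiple thereof) must be extracted by exploiting the cusp-by-cusp block structure of $\underline{\mathbf{C}}$ and a careful choice of integral bases for $L_2^\pm$ compatible with the decomposition of $H^1(\partial\overline{X};E_{\bar{\rho}_m})_\pm$ into contributions of the individual cusps.
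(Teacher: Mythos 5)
Your proof follows essentially the same strategy as the paper's: (a) identify $\vol_{\mathcal{B}^1}(H^1_{free})$ with a covolume in $H^1(\partial\overline{X};E_{\bar\rho_m})_-$ via \eqref{restrictionI} and the injectivity of $\iota_1^*$; (b) control a torsion index via the long exact sequence of $(\overline{X},\partial\overline{X})$ together with Poincar\'e--Lefschetz duality and the self-duality of $\bar\Lambda(m)$ over $\ZZ$; (c) use $d_{Eis,\mathbf{C}}$ and the index $I=[L_2:L_2^-\oplus L_2^+]$ to bound the remaining lattice comparison. The only real difference is organizational: you work with the saturation $L_1^{\mathrm{sat}}$ of $\pr_{free}\iota_1^*L_1$ inside $L_2 = H^1(\partial\overline{X};\bar\Lambda(m))_{free}$, whereas the paper defines the explicit graph lattice $A(\bar\rho_m)=\{\eta+\underline{\mathbf{C}}\eta : \eta\in L_2^-,\ \underline{\mathbf{C}}\eta\in L_2^+\}$ and multiplies the boundary image of $H^1_{free}$ by the index $k=I$ so that it lands inside $A(\bar\rho_m)$. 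Your $L_1^{\mathrm{sat}}$ contains $A(\bar\rho_m)$, and the two bookkeeping schemes are interchangeable. Your third step is stated more loosely than I would like: what one readily gets from the definitions is $d\,L_2^-\subset \pi_-(A(\bar\rho_m))\subset\pi_-(L_1^{\mathrm{sat}})\subset I^{-1}L_2^-$, so the exponent $\kappa(\Gamma)$ comes simply from the $\ZZ$-rank of $L_2^-$ (as the paper says), not from any further exploitation of the cusp-by-cusp block structure of $\underline{\mathbf{C}}$; the invocation of adapted bases is a harmless red herring, and the remaining covolume $\vol_{\{\Phi\}}(L_2^-)$ is passed over with the same brevity as in the paper's last sentence.
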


\begin{proof}
Let $k:=[H^1(\partial\overline{X};\bar{\Lambda}(m))_{free}:H^1(\partial\overline{X};\bar{\Lambda}(m))_-\oplus 
H^1(\partial\overline{X};\bar{\Lambda}(m))_+]$. We define a free $\Z$-submodule 
$A(\bar{\rho}_m)$ of $H^1(\partial\overline{X};\bar{\Lambda}(m))_-\oplus 
H^1(\partial\overline{X};\bar{\Lambda}(m))_+$ by 
\begin{align*}
&A(\bar{\rho}_m)\\:=&\{\eta_i+\underline{\mathbf{C}}(\sigma_{-m-2},m/2)\eta_i\colon \eta_i \in H^1(\partial\overline{X};\bar{\Lambda}(m))_-\colon \underline{\mathbf{C}}(\sigma_{-m-2},m/2)\eta_i\in H^1(\partial\overline{X};\bar{\Lambda}(m))_+ \}.
\end{align*}
Let $\pr_{free}:H^1(\partial\overline{X};\bar{\Lambda}(m))\to H^1(\partial\overline{X};\bar{\Lambda}(m))_{free}$ be the projection. Then by \eqref{restrictionI} the lattice $k\cdot\pr_{free}i_1^*H^1(\overline{X};\bar{\Lambda}(m))_{free}$ is a $\Z$- sublattice of $A(\bar{\rho}_m)$
whose $\Z$-rank is equal to that of $A(\bar{\rho}_m)$. Thus the quotient 
$ A(\bar{\rho}_m)/\left(k\cdot\pr_{free}i_1^*H^1(\overline{X};\bar{\Lambda}(m))_{free}\right)$ embeds into 
$H^2(\overline{X},\partial \overline{X};\bar{\Lambda}(m))_{tors}$ by the long exact cohomology sequence. 
By Poincar\'e duality, \cite[page 223-224]{Wa} and the universal 
coefficient theorem one has 
\[
H^2(\overline{X},\partial \overline{X};\bar{\Lambda}(m))_{tors}\cong H_1(\overline{X};\bar{\Lambda}(m))_{tors}
\cong H^2(\overline{X};\bar{\Lambda}(m))_{tors},
\]
where in the last isomorphism we used that $\bar{\Lambda}(m)$ 
was self-dual over $\Z$. 
On the other hand, if $\pi_{-}:A(\bar{\rho}_m)\to
H^1(\partial\overline{X};\bar{\Lambda}(m))_-$ is the projection, 
then by the definition of $d_{Eis,\mathbf{C}}(\Gamma,\bar{\rho}_m)$
and the fact that $\rk_\Z H^1(\partial\overline{X};\bar{\Lambda}(m))_-=\kappa(\Gamma)$, the order 
of the quotient  $H^1(\partial\overline{X};\bar{\Lambda}(m))_-/\pi_{-}A(\bar{\rho}_m)$ can be estimated as
\[
|H^1(\partial\overline{X};\bar{\Lambda}(m))_-/\pi_{-}A(\bar{\rho}_m)|\leq d_{Eis,\mathbf{C}}(\Gamma,\bar{\rho}_m)^{\kappa(\Gamma)}. 
\]
Thus the first estimate follows easily from 
the definition of $\vol_{\mathcal{B}^1(\Gamma;\bar{\rho}_m)}(H^1(X,\bar{\Lambda}(m)))_{free}$. 
\end{proof}

\begin{lem}\label{Lemreg2}
We have :
$$
\log\vol_{\mathcal{B}^2(\Gamma;\bar{\rho}_m)}(H^2(X,\bar{\Lambda}(m))_{free} \leq \log|H^1(X,\bar{\Lambda}(m)|_{tors}.
$$
\end{lem}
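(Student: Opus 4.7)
The plan is to mirror the proof of Lemma \ref{Lemreg1}, exploiting the fact that in degree two the restriction formula \eqref{restrictionII} is particularly clean: $\iota_2^*$ sends each basis element $E(\Phi_{i,j}^2,(m+1)/2)$ of $\mathcal{B}^2(\Gamma;\bar\rho_m)$ directly to the orthonormal harmonic form $\Phi_{i,j}^2$, with no constant term matrix involved. Since both $H^2(\overline{X};E_{\bar\rho_m})$ and $H^2(\partial\overline{X};E_{\bar\rho_m})$ have real dimension $2\kappa(\Gamma)$ and $\iota_2^*$ is injective by \cite[Lemma 8.3]{Pf}, the map $\iota_2^*$ is a bijective isometry in the natural inner products, hence
\begin{equation*}
\vol_{\mathcal{B}^2}\bigl(H^2(\overline{X};\bar{\Lambda}(m))_{free}\bigr)=\vol_{\{\Phi_{i,j}^2\}}\bigl(\iota_2^*(H^2(\overline{X};\bar{\Lambda}(m))_{free})\bigr).
\end{equation*}

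Next I would bound the image lattice $\mathcal{L}:=\iota_2^*(H^2(\overline{X};\bar{\Lambda}(m))_{free})$ as a sublattice of $H^2(\partial\overline{X};\bar{\Lambda}(m))_{free}$. The long exact cohomology sequence of the pair $(\overline{X},\partial\overline{X})$,
\begin{equation*}
H^2(\overline{X};\bar{\Lambda}(m))\xrightarrow{\iota_2^*}H^2(\partial\overline{X};\bar{\Lambda}(m))\to H^3(\overline{X},\partial\overline{X};\bar{\Lambda}(m)),
\end{equation*}
shows that $[H^2(\partial\overline{X};\bar{\Lambda}(m))_{free}:\mathcal{L}]$ divides the order of the cokernel of $\iota_2^*$, and hence $|H^3(\overline{X},\partial\overline{X};\bar{\Lambda}(m))|$. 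As $\bar\rho_m$ has no nonzero $\Gamma$-invariants for $m\ge 1$, the latter group is entirely torsion. Poincar\'e--Lefschetz duality together with the $\Z$-self-duality of $\bar{\Lambda}(m)$ identifies it with $H_0(\overline{X};\bar{\Lambda}(m))$, which by the universal coefficient theorem (exactly as in the corresponding identification used in Lemma \ref{Lemreg1}) is isomorphic to $H^1(\overline{X};\bar{\Lambda}(m))_{tors}$. Thus $[H^2(\partial\overline{X};\bar{\Lambda}(m))_{free}:\mathcal{L}]\le|H^1(\overline{X};\bar{\Lambda}(m))_{tors}|$.

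The final step is to combine this index bound with the estimate $\vol_{\{\Phi_{i,j}^2\}}(H^2(\partial\overline{X};\bar{\Lambda}(m))_{free})\le 1$. Here one uses that $\partial\overline{X}$ is a disjoint union of flat tori $T_{\Lambda_P(\mathfrak{b}_j)}$ carrying the self-dual local system $\bar{\Lambda}(m)$; the perfect Poincar\'e pairing
\begin{equation*}
H^0(T_j;\bar{\Lambda}(m))\times H^2(T_j;\bar{\Lambda}(m))\to\Z
\end{equation*}
combined with the fact that the inner product on $\mathcal{H}^2(\mathfrak{n}_{P_j};\bar V(m))$ is induced by the admissible inner product on $\bar V(m)$ and the restriction of \eqref{metr} to $\mathfrak{n}_{P_j}$ shows that the integral lattice $\mathcal{H}^2(\mathfrak{n}_{P_j};\bar{\Lambda}(m))$ is unimodular with respect to this inner product. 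Putting everything together yields
\begin{equation*}
\vol_{\{\Phi_{i,j}^2\}}(\mathcal{L})\le[H^2(\partial\overline{X};\bar{\Lambda}(m))_{free}:\mathcal{L}]\le|H^1(\overline{X};\bar{\Lambda}(m))_{tors}|,
\end{equation*}
as required.

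The principal technical point is the unimodularity assertion in the last paragraph, which hinges on matching the $\Z$-self-duality of $\bar{\Lambda}(m)$ with the admissible inner product at each cusp; the remaining pieces are diagram chases with the long exact sequence and the same Poincar\'e--Lefschetz and universal coefficient identifications already applied in the proof of Lemma \ref{Lemreg1}. Conceptually, the main simplification compared to the degree-one case is that \eqref{restrictionII} forces the Eisenstein basis $\mathcal{B}^2$ to be matched isometrically with a Lie-algebra-cohomology basis on the boundary, so no Eisenstein denominator $d_{Eis,\mathbf{C}}$ enters.
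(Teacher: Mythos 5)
Your overall strategy is the same one the paper intends (the paper's ``proof'' is a one-line ``similar to that of the previous lemma''): use the fact that $\iota_2^*$ carries the Eisenstein basis $\mathcal{B}^2$ isometrically onto the orthonormal basis $\{\Phi_{i,j}^2\}$ of $H^2(\partial\overline{X};E_{\bar\rho_m})$ thanks to \eqref{restrictionII}, then bound the index of the image lattice inside the boundary integral lattice via the long exact sequence of the pair, Poincar\'e--Lefschetz duality and the universal coefficient theorem. The identification of the cokernel term $H^3(\overline{X},\partial\overline{X};\bar\Lambda(m))$ with $H^1(\overline{X};\bar\Lambda(m))_{tors}$ via $H_0(\overline{X};\bar\Lambda(m))$ is exactly right, and the observation that the constant--term matrix (and hence $d_{Eis,\mathbf{C}}$) plays no role here is the correct reason why the degree-2 estimate is cleaner than Lemma \ref{Lemreg1}.

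The one place I would push back is your final ``unimodularity'' step: the assertion that $\vol_{\{\Phi_{i,j}^2\}}\bigl(H^2(\partial\overline{X};\bar\Lambda(m))_{free}\bigr)\le 1$. As you present it, the argument invokes the perfect cup-product pairing $H^0(T_j;\bar\Lambda(m))\times H^2(T_j;\bar\Lambda(m))\to\Z$ and then concludes that the lattice $\mathcal{H}^2(\nL_{P_j};\bar\Lambda(m))$ is unimodular with respect to the inner product coming from \eqref{metr} and the admissible form on $\bar V(m)$. But this pairing is between $H^0$ and $H^2$, so by itself it only tells you that $H^2(T_j;\bar\Lambda(m))_{free}$ is the $\Z$-dual of $H^0(T_j;\bar\Lambda(m))_{free}$; it gives no control on the Gram matrix of the degree-2 lattice with respect to the specific inner product that makes $\{\Phi_{i,j}^2\}$ orthonormal, since the admissible inner product is only canonical up to scaling and there is no a priori reason it should match the $\Z$-duality on the nose. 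To close this gap you would need either a concrete normalization comparing the admissible form with the integral structure on $\mathcal{H}^2(\nL_P;\bar\Lambda(m))$ at each cusp, or a cruder bound on that covolume absorbed into the $O(m\log m)$ bookkeeping of the ambient argument (which is all the application in Proposition \ref{aux_torsgrowth} actually requires). As stated, the passage from the $H^0$--$H^2$ pairing to unimodularity of the $H^2$ lattice in the analytic inner product is the step that is not justified.
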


\begin{proof}
Similar to that of the previous lemma.  
\end{proof}

%%%%%%%%%%%%%%%%%%%%%%%%%%%%%%%%%%%%%%%%%%%%%%%%%%%%%%%%%%%%%%%%%%%%%%%%%%%%%%%%

\section{The adelic intertwining operators}
\label{intertwine_adele}
\setcounter{equation}{0}
In this section and the next one, we want to establish an estimate of $d_{Eis,\mathbf{C}}(\Gamma,\bar{\rho}_m)$ by working adelically. We let $G:=\SL_2$ regarded 
as an algebraic group over $F$. Also, for notational convenience we shall write $K_\infty:=\SU(2)$. Let $\mathbb{A}$ denote the adele ring of $F$ and let $\mathbb{A}_f$ be the finite adeles. For a linear algebraic group $H$ defined over $F$ let $H(\mathbb{A})$ denote its adelic points. For $v$ a finite place we let $F_v$ denote the completion of $F$ at $v$,  we let $\mathcal{O}_v$ denote the integers in $F_v$ and we let $\pi_v\in\mathcal{O}_v$ be a fixed uniformizer. We let $\Gamma$ be a fixed neat congruence subgroup of $\SL_2(\mathcal{O}_D)$ of level $\aL=\prod_{v\:\text{finite}}\mathfrak{p}_v^{n_v}$, where $\mathfrak{p}_v$ is the prime ideal corresponding to $v$. 

Let $P$ be the parabolic subgroup of $G$ consisting of upper triangular matrices and let $T$ denote the set of diagonal matrices of determinant one. 
Let $N_P$ denote the upper triangular matrices with $1$ as diagonal entries. We regard both $P$ and $N_P$ as algebraic groups over $F$. Then $P=TN_P$. Let 
\[
K_{max}:=K_\infty\times\prod_{v \:\text{finite}}\SL_2(\mathcal{O}_v).
\]
Then one has $G(\mathbb{A})=P(\mathbb{A})K_{max}$. Let $K(\Gamma)_f\subseteq K_{max}$ be the compact subgroup of $G(\mathbb{A}_f)$ corresponding to $\Gamma$, i.e. $\Gamma=G(F)\cap K(\Gamma)_f$, where $\Gamma$ and $G(F)$ are embedded diagonally into $G(\mathbb{A}_f)$. Let $g_{P_1},\dots,g_{P_h}\in G(F)$ denote fixed 
representatives of $P(F)\backslash G(F)/\Gamma$. We assume $g_{P_1}=1$. Let $P_i:=g_{P_i}^{-1}Pg_{P_i}$ denote the corresponding parabolic subgroups of $G$ defined over $F$. In this section and the next one, we let $N_{P_i}$ denote the unipotent radical of $P_i$ regarded as an algebraic group over $F$ and we shall denote by $N_{P_i,\infty}$ the corresponding real subgroup of $\SL_2(\C)$. We embed $G(F)$, $P(F)$ as well as the elements $g_{P_1},\dots, g_{P_h}$ diagonally into $G(\mathbb{A})$. Then we have a canonical isomorphism
\begin{align}\label{iso}
\mathcal{I}_{\mathbb{A}}:\:P(F)\backslash G(\mathbb{A})/K(\Gamma)_f\cong \bigsqcup_{i=1}^h(\Gamma\cap N_{P_i,\infty})\backslash \SL_2(\C)
\end{align}
which is defined as follows. By the strong approximation theorem one has $G(\mathbb{A})=G(F)\SL_2(\C) K(\Gamma)_f$. This implies that each $g\in G(\mathbb{A})$ can be written as 
\begin{align}\label{decg}
g=bg_{P_i}g_\infty k_f, 
\end{align}
where $b\in P(F)$, $g_{P_i}\in\{g_{P_1},\dots,g_{P_h}\}$ is uniquely determined and $g_\infty$ is unique up to $\Gamma\cap P_i(F)=\Gamma\cap N_{P_i,\infty}$. Let $\pi:G(\mathbb{A})\to P(F)\backslash G(\mathbb{A})/K(\Gamma)_f$ denote the projection. Then according to \eqref{decg}, for $g\in G(\mathbb{A})$ we set $\mathcal{I}_{\mathbb{A}}(\pi(g)):=(\Gamma\cap N_{P_i,\infty})g_\infty$, where $(\Gamma\cap N_{P_i,\infty})g_\infty$ denotes the equivalence class of $g_\infty$ in $(\Gamma\cap N_{P_i,\infty})\backslash\SL_2(\C)$. We let $\mathcal I_{\mathbb{A},P_i}:P(F)\backslash G(\mathbb{A})/K(\Gamma)_f\to(\Gamma\cap N_{P_i,\infty})\backslash \SL_2(\C)$ be the maps induced by $\mathcal{I}_{\mathbb{A}}$. The map $\mathcal I_\Ade$ induces an isomorphism
\begin{multline}\label{eqI}
\left(\mathcal{I}_\mathbb{A}\right)^* :\: \bigoplus_{i=1}^{\kappa(\Gamma)} \left(C^\infty(N_{P_i,\infty}\backslash\SL_2(\C))\otimes(\nL_P\oplus\aL_P)^*\otimes \bar{V}(m)\right)^{K_\infty} \\
   \xrightarrow[]{\sim} \left(C^\infty(P(F)N(\mathbb{A})\backslash G(\mathbb{A})/K(\Gamma)_f) \otimes (\nL_P\oplus\aL_P)^*\otimes \bar{V}(m)\right)^{K_\infty} =: W.
\end{multline}

Here $K_\infty$ acts on the $C^\infty$-spaces by right translation and on $(\aL_P\oplus\nL_P)^*\otimes \bar{V}(m)$ by $\Ad^*\otimes\rho(m)$. We shall denote this representation also by $\nu_1(\rho(m))$.  We regard the real subgroups $M_P$ and $A_P$ of $\SL_2(\C)$ from \eqref{MA} or more generally the real subgroups $M_{P_i}$ and $A_{P_i}$ for a parabolic $P_i$ introduced above as subgroups of $G(\mathbb{A})$. For $\sigma\in \hat{M}_P$ with $[\nu_1(\rho(m)):\sigma]\neq 0$ and $\lambda\in\mathbb{C}$ we let $W_{\bar{\rho}_m}(\sigma,\lambda)$ be defined by:
\begin{equation} \label{def_space_W}
W_{\bar{\rho}_m}(\sigma,\lambda) = \left\{ f\in W:\: \forall g\in G(\mathbb{A}), a\in A_P,\, m\in M_P,\, f(amg) = \xi_{\lambda+1}(a)\sigma^{-1}(m)f(g) \right\}.
\end{equation}
Let $K_{P_i,\infty}:=g_{P_i}^{-1}K_{\infty}g_{P_i}$. Then $K_{P_i,\infty}$ acts on $(\nL_P\oplus\aL_P)^*\otimes \bar{V}(m)$ by conjugating with $g_{P_i}$. We let $\mathcal{E}_{P_i}(\sigma,\lambda,\nu_1(\rho_m))$ be the space of all $f\in (C^\infty(N_{P_i,\infty}\backslash \SL_2(\C))\otimes (\nL_P\oplus\aL_P)^*\otimes \bar{V}(m))^{K_{P_i,\infty}}$ which additionally satisfy
\[
f(a_{P_i}m_{P_i}g)=\xi_{P_i,\lambda+1}(a_{P_i})\sigma_{P_i}(m_{P_i}^{-1})f(g)\quad\forall g\in \SL_2(\C), 
\quad \forall a_{P_i}\in A_{P_i},\quad\forall m_{P_i}\in M_{P_i}.
\]
Here $\xi_{P_i,\lambda+2}$ and $\sigma_{P_i}$ are the characters which arise from $\xi_\lambda$ and $\sigma$ by conjugating with $g_{P_i}$. If $\nu$ is a finite-dimensional representation of $K$ on a complex vector space $V$, we let $V^{\sigma_{P_i}}$ denote the $\sigma_{P_i}$-isotypical component of for the restriction of the representation $\nu$ to $M_{P_i}$. Then we have the following Lemma.

\begin{lem}\label{Lemma}
For $f\in W_{\bar{\rho}_m}(\sigma,\lambda)$ define $\mu_{\sigma,\lambda}(f)\in \bigoplus_{i=1}^{\kappa(\Gamma)}\left(\left(\aL_P\oplus\nL_P\right)^*\otimes \bar{V}(m)\right)^{\sigma_{P_i}}$ by
\[
\mu_{\sigma,\lambda}(f):=\sum_{i=1}^{\kappa(\Gamma)}\left(\mathcal{I}_{\mathbb{A},P_i}^{-1}\right)^{*}f(1)
=\sum_{i=1}^{\kappa(\Gamma)}f(g_{P_i})
\]
Then $\mu_{\sigma,\lambda}$ defines an isomorphism
\[
\mu_{\sigma,\lambda}:W_{\bar{\rho}_m}(\sigma,\lambda)\cong\bigoplus_{i=1}^{\kappa(\Gamma)}\left(\left(\aL_P\oplus\nL_P\right)^*\otimes \bar{V}(m)\right)^{\sigma_{P_i}}
\]
\end{lem}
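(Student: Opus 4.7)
The plan is to combine the strong-approximation isomorphism $\mathcal{I}_\mathbb{A}$ with the Iwasawa decomposition of $\SL_2(\C)$ to reduce the statement to a finite-dimensional calculation at the identity of each cusp factor. First I will identify $W_{\bar\rho_m}(\sigma,\lambda)$ with $\bigoplus_{i=1}^{\kappa(\Gamma)}\mathcal{E}_{P_i}(\sigma,\lambda,\nu_1(\rho_m))$ via $(\mathcal{I}_\mathbb{A})^{-1,*}$, and then identify each $\mathcal{E}_{P_i}(\sigma,\lambda,\nu_1(\rho_m))$ with the isotypic component $((\aL_P\oplus\nL_P)^*\otimes\bar V(m))^{\sigma_{P_i}}$ via evaluation at the identity. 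The map $\mu_{\sigma,\lambda}$ is precisely the composition of these two identifications.

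For the first step, given $f\in W_{\bar\rho_m}(\sigma,\lambda)$, set $f_i(g_\infty):=f(g_{P_i}g_\infty)$; this descends to $(\Gamma\cap N_{P_i,\infty})\backslash\SL_2(\C)$ by the $P(F)$-invariance of $f$ and the definition of $K(\Gamma)_f$. For $a_{P_i}\in A_{P_i}$, the element $a:=g_{P_i}a_{P_i}g_{P_i}^{-1}\in A_P\subset\SL_2(\C)$ satisfies
\[
f_i(a_{P_i}g_\infty)=f(g_{P_i}a_{P_i}g_\infty)=f(a\,g_{P_i}g_\infty)=\xi_{\lambda+1}(a)f_i(g_\infty)=\xi_{P_i,\lambda+1}(a_{P_i})f_i(g_\infty)
\]
by the very definition of $\xi_{P_i,\lambda+1}$, and the analogous argument with $m\in M_{P_i}$ yields the required $\sigma_{P_i}^{-1}$-covariance. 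The right $K_{P_i,\infty}$-equivariance of $f_i$ with the $g_{P_i}$-twisted representation is obtained by transferring the global $K_\infty$-equivariance of $f$ through the isomorphism $K_\infty\cong K_{P_i,\infty}$ given by conjugation with $g_{P_i}$; this is exactly why the representation $\nu_1(\rho_m)$ has to be twisted in the definition of $\mathcal{E}_{P_i}$. Conversely, any tuple $(f_i)$ in the direct sum lifts uniquely to an element of $W_{\bar\rho_m}(\sigma,\lambda)$ via the decomposition $g=bg_{P_i}g_\infty k_f$ afforded by $\mathcal{I}_\mathbb{A}$.

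For the second step, the Iwasawa decomposition $\SL_2(\C)=N_{P_i,\infty}A_{P_i}K_{P_i,\infty}$ is unique and $M_{P_i}\subseteq K_{P_i,\infty}$; consequently any $f_i\in\mathcal{E}_{P_i}(\sigma,\lambda,\nu_1(\rho_m))$ is determined by its value $v:=f_i(1)$ through the formula $f_i(nak)=\xi_{P_i,\lambda+1}(a)\,(k^{-1}\cdot v)$, where $k\in K_{P_i,\infty}$ acts via the twisted representation. Testing the left $M_{P_i}$-covariance against the right $K_{P_i,\infty}$-equivariance at a single $m\in M_{P_i}$ forces $v$ to lie in the $\sigma_{P_i}$-isotypic component, and conversely any such $v$ gives a well-defined element of $\mathcal{E}_{P_i}(\sigma,\lambda,\nu_1(\rho_m))$. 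Composing the two identifications sends $f$ to $\sum_{i=1}^{\kappa(\Gamma)} f_i(1)=\sum_{i=1}^{\kappa(\Gamma)} f(g_{P_i})=\mu_{\sigma,\lambda}(f)$. The main bookkeeping step is verifying in the previous paragraph that conjugation by $g_{P_i}$ correctly converts the global characters and representations into their twisted local counterparts at the $i$-th cusp; once this is set up, the lemma reduces to the standard observation that a principal-series section is determined by its restriction to the maximal compact.
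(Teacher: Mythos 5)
Your proof is correct and takes essentially the same route as the paper: push $f$ through $(\mathcal{I}_{\mathbb{A},P_i}^{-1})^*$ to land in $\mathcal{E}_{P_i}(\sigma,\lambda,\nu_1(\rho_m))$, observe that evaluation at the identity is a bijection onto the $\sigma_{P_i}$-isotypic component via the Iwasawa decomposition, and produce the inverse by the explicit formula $\Phi_{P_i;\lambda}(n_{P_i}a_{P_i}k):=\xi_{P_i;\lambda+1}(a_{P_i})\nu_1(\rho_m)(k^{-1})\Phi_{P_i}$ followed by $\mathcal{I}_{\mathbb{A}}^*$. The only difference is that the paper leaves the verification that $(\mathcal{I}_{\mathbb{A},P_i}^{-1})^*f$ satisfies the covariance conditions as ``easy to see,'' whereas you spell out the conjugation bookkeeping; both are the same argument.
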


\begin{proof}
It is easy to see that each function $\left(\mathcal{I}_{\mathbb{A},P_i}^{-1}\right)^{*}f$ belongs to $\mathcal{E}_{P_i}(\sigma,\lambda,\nu_1(\rho_m))$ and is therefore determined by its value at 1, which moreover belongs to  $\left(\left(\aL_P\oplus\nL_P\right)^*\otimes \bar{V}(m)\right)^{\sigma_{P_i}}$. On the other hand, for $\Phi_{P_i}\in\left(\left(\aL_P\oplus\nL_P\right)^*\otimes \bar{V}(m)\right)^{\sigma_{P_i}}$ we define $\Phi_{P_i;\lambda}\in \mathcal{E}_{P_i}(\sigma,\lambda,\nu_1(\rho_m))$ by
\begin{align}\label{defPhim}
\Phi_{P_i;\lambda}(n_{P_i}a_{P_i}k):=\xi_{P_i;\lambda+1}(a_{P_i})\nu_1(\rho_m)(k^{-1})\Phi_{P_i}.
\end{align}
Let $\mu(\sigma,\lambda)^{-1}(\Phi_{P_i}):=\mathcal{I}_{\mathbb{A}}^{*}(\Phi_{P_i,\lambda})$. Then it immediately follows from the definitions 
that $\mu(\sigma,\lambda)$ and $\mu(\sigma,\lambda)^{-1}$ are inverse to each other.
\end{proof}

We shall from now on identify $T(\mathbb{A})$ with the ring of ideles $\mathbb{A}^*$ by sending $x\in\mathbb{A}^*$ to the diagonal matrix $\diag(x,x^{-1})$. Let $U(\Gamma)_f:=T(\mathbb{A})\cap K(\Gamma)_f$. Let $\sigma=\sigma_k$, $k\in\mathbb{Z}$ and let $\lambda\in\C$. Then we combine $\sigma^{-1}$ and $\lambda$ to a character $\chi_{\infty,\sigma,\lambda}$ of $M A=T_\infty\cong \C^*$ by putting 
\begin{equation} \label{def_character}
\chi_{\infty,\sigma,\lambda}(z):=|z|^{2(\lambda+1)}\left(\frac{\bar{z}}{|z|}\right)^{k}.
\end{equation}

We let $\mathcal{H}(\sigma,\lambda,K(\Gamma)_f)$ denote the set of all Hecke characters $\chi: F^{*}\backslash\mathbb{A}^*\to\C$ which are trivial on $U(\Gamma)_f$ and which satisfy $\chi_{\infty}=\chi_{\infty,\sigma,\lambda}$. If $\left|\cdot\right|_{\mathbb{A}}$ denotes the norm on the adeles, then each $\chi\in\mathcal{H}(\sigma,\lambda,K(\Gamma)_f)$ can be uniquely written as 
\begin{align}\label{formchi}
\chi=\left|\cdot\right|_{\mathbb{A}}^{2(\lambda+1)}\chi_1,
\end{align}
where $\chi_1$ is unitary. For $\chi\in\mathcal{H}(\sigma,\lambda,K(\Gamma)_f)$ with $\chi$ as in \eqref{formchi} we define 
$$
w_0\chi = \chi^{-1} = \left|\cdot\right|_{\mathbb{A}}^{2(-\lambda+1)}\bar{\chi}_1 \in\mathcal{H}(w_0\sigma,-\lambda,K(\Gamma)_f). 
$$ 
Since $T(\mathbb{A})$ normalizes $N(\mathbb{A})$ the group $T(\mathbb{A})$ acts on $W_{\bar{\rho}_m}(\sigma,\lambda)$ by left translations and thus we obtain a decomposition of $W_{\bar{\rho}_m}(\sigma,\lambda)$ into $\chi$-isotypical subspaces:
\begin{align}\label{decHecke}
W_{\bar{\rho}_m}(\sigma,\lambda)=\bigoplus_{\chi\in \mathcal{H}(\sigma,\lambda,K(\Gamma)_f)}W_{\bar{\rho}_m}(\sigma,\lambda)_{\chi}.
\end{align}
Let $t_{P_1},\dots,t_{P_h}\in T(\mathbb{A}_f)$ denote fixed representatives of $T(F)\backslash T(\mathbb{A}_f)/U(\Gamma)_f$. Then for $f\in W_{\bar{\rho}_m}(\sigma,\lambda)$, its projection $f_\chi$ onto $W_{\bar{\rho}_m}(\sigma,\lambda)_{\chi}$ is given by 
\begin{align}\label{projfchi}
f_{\chi}(g)=\frac{1}{\kappa(\Gamma)}\sum_{i=1}^{\kappa(\Gamma)}\bar{\chi}(t_{P_j})f(t_{P_j}g).
\end{align}

Now we use the the notations of the previous sections for the various cohomology groups. We can canonically identify the Lie algebra $\nL_{P_i}$ of $N_{P_i,\infty}$ with $\nL_P$. Then we have canonical embeddings
\[
\bigoplus_{i=1}^{\kappa(\Gamma)}\mathcal{H}^1(\nL_P;\bar{V}(m))_+\hookrightarrow\bigoplus_{i=1}^{\kappa(\Gamma)}(\aL_P\oplus\nL_P)^{*}\otimes \bar{V}(m);\quad
\bigoplus_{i=1}^{\kappa(\Gamma)}\mathcal{H}^1(\nL_P;\bar{V}(m))_-\hookrightarrow\bigoplus_{i=1}^{\kappa(\Gamma)}(\aL_P\oplus\nL_P)^{*}\otimes \bar{V}(m).
\]
Moreover, an easy computation shows that in the present case these embeddings in fact give isomorphisms
\begin{align*}
&\bigoplus_{i=1}^{\kappa(\Gamma)}\mathcal{H}^1(\nL_P;\bar{V}(m))_+\cong\bigoplus_{i=1}^{\kappa(\Gamma)}\left((\aL_P\oplus\nL_P)^{*}\otimes \bar{V}(m)\right)^{\sigma_{m+2}};\\
&\bigoplus_{i=1}^{\kappa(\Gamma)}\mathcal{H}^1(\nL_P;\bar{V}(m))_-\cong\bigoplus_{i=1}^{\kappa(\Gamma)}\left((\aL_P\oplus\nL_P)^{*}\otimes \bar{V}(m)\right)^{\sigma_{-m-2}}.
\end{align*}
Thus together with Lemma \ref{Lemma}, we obtain isomorphisms
\begin{align}\label{muplus}
\mu_{+}(m):W_{\bar{\rho}_m}(\sigma_{m+2},-m/2)\cong \bigoplus_{i=1}^{\kappa(\Gamma)}\mathcal{H}^1(\nL_P;\bar{V}(m))_{+}\cong H^1(\partial\overline{X};\bar{V}(m))_{+}
\end{align}
and
\begin{align}\label{muminus}
\mu_{-}(m):W_{\bar{\rho}_m}(\sigma_{-m-2},m/2)\cong \bigoplus_{i=1}^{\kappa(\Gamma)}H^1(\nL_P;\bar{V}(m))_{-} \cong H^1(\partial\overline{X};\bar{V}(m))_{-}.
\end{align}

We shall denote the operator from $W_{\bar{\rho}_m}(\sigma_{-m-2},m/2)$ to $W_{\bar{\rho}_m}(\sigma_{m+2},-m/2)$ induced by $\underline{\mathbf{C}}(\sigma_{-m-2},m/2)$ and the isomorphisms $\mu_{\pm}(m)$ by $\underline{\mathbf{C}}(\sigma_{-m-2},m/2)$ too. Then it is well-known that for $f\in W_{\bar{\rho}_m}(\sigma_{-m-2},m/2)$ and $g\in G(\mathbb{A})$ one has 
\[
\underline{\mathbf{C}}(\sigma_{-m-2},m/2)f(g)=\int_{N(\mathbb{A})}f(w_0ng)dn,\quad w_0 = \begin{pmatrix} 0 & -1 \\ 1 & 0\end{pmatrix}.
\]
With respect to the decompositions of $W_{\bar{\rho}_m}(\sigma_{-m-2},m/2)$ resp. $W_{\bar{\rho}_m}(\sigma_{m+2},-m/2)$ into Hecke-isotypical subspaces given in \eqref{decHecke}, the operator $\underline{\mathbf{C}}(\sigma_{-m-2,m/2})$ splits as
\[
\underline{\mathbf{C}}(\sigma_{-m-2},m/2)=\bigoplus_{\chi\in \mathcal{H}(\sigma_{-m-2},m/2,K(\Gamma)_f)}C(\chi),
\]
where $C(\chi):W_{\bar{\rho}_m}(\sigma_{-m-2},m/2)_{\chi}\to W_{\bar{\rho}_m}(\sigma_{m+2},-m/2)_{w_0\chi}$. Let $f\in W(\sigma_{-m-2},m/2)_{\chi}$. For our later purposes we can assume that $f=f_\infty\otimes\bigotimes_{v\:\text{finite}}f_v$. Then one has 
\begin{align*}
C(\chi)f=J_\infty(\sigma_{-m-2},m/2)f_\infty\otimes\bigotimes_{v\:\text{finite}}C_v(\chi_v)f_v.
\end{align*}
where the operators $C_v(\chi_v)$ are defined by
\begin{equation}
C_v(\chi_v)f_v = \int_{F_v} f_v\left(w_0\begin{pmatrix} 1&x \\ 0&1 \end{pmatrix}\right) dx
\label{local_intertwine}
\end{equation}
The integral above can be computed explicitly, resulting in the next lemma. Let us set notation for it. Let $\chi_1$ be as in \eqref{formchi}. For $v$ a finite place of $F$ we let $\chi_{1,v}$ be the local component of $\chi_1$ at $v$. Then we say that $\chi_1$ is unramified at $v$ if $\ker(\chi_{1,v})\supseteq 1+\mathcal{O}_v$. Otherwise we say that $\chi_1$ is ramified at $v$. If $\chi_1$ is unramified at $v$, then $\chi_{1,v}(\pi_v)$ does not depend on the choice of $\pi_v$ (this happens in particular if $n_v = 0$). For $v$ a finite place, $\chi_1$ unramified at $v$ and $s\in\C$ the local $L$-factor $L_v(\chi_{1,v},s)$ is defined by 
\begin{equation} \label{def_local_L}
L_v(\chi_{1,v},s):=\frac{1}{1-\chi_{1,v}(\pi_v)|\pi_v|^{-s}}. 
\end{equation}
Then the following Lemma holds. 

\begin{lem}\label{LemIO}
Let $f\in W_{\bar{\rho}_m}(\sigma_{-m-2},m/2)$ and $v$ be a finite place of $F$. Then: 
\begin{itemize}
\item[(i)] If $n_v=0$, one has 
\[
C_v(\chi_v)f_v(1_v)=\frac{L_v(\chi_{1,v},m)}{L_v(\chi_{1,v},m-1)}f_v(\id).
\]

\item[(ii)] If $n_v>0$ and $\chi$ is ramified at $v$, then for $k_v\in K_{max,v}$ one has
\[
C_v(\chi_v)f_v(k_v)=I_v(k_v)f_v(k_v),
\]
where $I_v(k_v)\in |\pi_v|^{-2n_vm}\overline{\mathbb{Z}}$.

\item[(iii)] If $n_v>0$ and $\chi$ is unramified at $v$, then for $k_v\in K_{max,v}$ one has
\[
C_v(\chi_v)f_v(k_v)=\frac{L_v(\chi_{1,v},m)}{L_v(\chi_{1,v},m-1)}I_v(k_v)f_v(k_v),
\]
where $I_v(k_v)\in |\pi_v|^{-2n_vm}\overline{\mathbb{Z}}$.
\end{itemize}
\end{lem}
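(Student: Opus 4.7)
My plan is to compute the local integral
\[
C_v(\chi_v) f_v(k_v) = \int_{F_v} f_v(w_0 n(x) k_v)\,dx
\]
directly, via the Iwasawa decomposition of $w_0 n(x)$ at $v$ together with the transformation laws of $f_v$ on the left by $B(F_v)$ and on the right by $K(\Gamma)_{f,v}$. First I would split the integration region according to whether $|x|_v\leq 1$ or $|x|_v > 1$. In the first case the element $w_0 n(x)$ already lies in $K_{\max,v}=\SL_2(\mathcal{O}_v)$. In the second case one has the explicit factorisation
\[
w_0 n(x) = \begin{pmatrix} x^{-1} & 0 \\ 0 & x \end{pmatrix}\begin{pmatrix} 1 & -x \\ 0 & 1 \end{pmatrix}\begin{pmatrix} 1 & 0 \\ x^{-1} & 1 \end{pmatrix},
\]
the rightmost factor lying in $K_{\max,v}$ since $|x^{-1}|_v < 1$. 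Using $f_v(\diag(a,a^{-1})n g) = \chi_v(a) f_v(g)$ and substituting $y = x^{-1}$ in the second piece gives
\[
C_v(\chi_v) f_v(k_v) = \int_{|x|\leq 1} f_v(w_0 n(x) k_v)\,dx + \int_{|y|<1} |y|^{-2} \chi_v(y)\, f_v\!\left(\begin{pmatrix}1&0\\ y&1\end{pmatrix} k_v\right) dy.
\]

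For part (i), $f_v$ is the spherical vector normalised so that $f_v|_{K_{\max,v}} \equiv f_v(1)$, and both integrands simplify accordingly. Stratifying the second integral by $v(y) = j\geq 1$ and using $\chi_v = |\cdot|^{2(\lambda+1)}\chi_{1,v}$ with $\lambda = m/2$, one obtains a geometric series in $\chi_{1,v}(\pi_v)$ whose closed form is, after combining with the first integral, the $L$-ratio $L_v(\chi_{1,v},m)/L_v(\chi_{1,v},m-1)$. This is the standard local Gindikin--Karpelevich identity for $\SL_2$.

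For parts (ii) and (iii) with $n_v>0$, the function $f_v$ is only invariant under the principal congruence subgroup $K(\Gamma)_{f,v}\subset K_{\max,v}$. However, $K(\Gamma)_{f,v}$ is normal in $K_{\max,v}$ (being cut out by the two-sided ideal $\pi_v^{n_v} M_2(\mathcal{O}_v)$), so for $g\in K_{\max,v}$ and $h\in K(\Gamma)_{f,v}$ one has $f_v(hg) = f_v(g\cdot g^{-1}hg) = f_v(g)$. Applying this with $h = \begin{pmatrix}1&0\\ \pi_v^{n_v}t&1\end{pmatrix}$ for $t\in \mathcal{O}_v$ shows that the $f_v$-factor of the second integrand is invariant under $y\mapsto y+\pi_v^{n_v}\mathcal{O}_v$, and a similar argument handles the first integrand. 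Stratifying by $j = v(y)$, the portion with $j\geq n_v$ produces $f_v(k_v)$ times a geometric series in $\chi_{1,v}(\pi_v)|\pi_v|^{\cdot}$, and the portion with $1\leq j<n_v$ reduces to a finite sum of values of $f_v$ on coset representatives of $K_{\max,v}/K(\Gamma)_{f,v}$ weighted by explicit values of $\chi_v$.

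The denominator estimate then follows by tracking the largest $|\pi_v|$-power among the weights: the stratum $|y|=|\pi_v|^j$ contributes a factor of order $|\pi_v|^{-j(m+1)}\chi_{1,v}(\pi_v)^j$ (after the $|y|^{-2}$ Jacobian), attaining its maximum $|\pi_v|^{-n_v(m+1)}$ at $j=n_v$; combining this with the companion contribution from the dual summand $V(m)^*$ in $\bar{V}(m) = V(m)\oplus V(m)^*$ and absorbing lower-order corrections yields the announced $|\pi_v|^{-2n_v m}$. Algebraic integrality $I_v(k_v)\in\overline{\Z}$ (after clearing this denominator) follows because $\chi_{1,v}(\mathcal{O}_v^*)$ consists of roots of unity in $\C^*$, so every summand is, up to the common $|\pi_v|$-power, a sum of roots of unity. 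In case (iii) the "tail" $j\geq n_v$ reassembles into the same $L$-ratio as in (i). The main technical issue is the precise bookkeeping of the power of $|\pi_v|$ in the denominator; in case (ii) one also verifies that the spherical contribution vanishes by the orthogonality of $\chi_{1,v}$ on $\mathcal{O}_v^*$, which is routine once the decomposition is in hand.
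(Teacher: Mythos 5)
Your proof follows essentially the same route as the paper's: split the local integral at $|x|_v\le 1$ versus $|x|_v>1$, apply the Iwasawa decomposition $w_0n(x)=\mathrm{diag}(x^{-1},x)\,n(-x)\,\bar n(x^{-1})$ on the large stratum, pull the character factor out by the left-equivariance of $f_v$, and then stratify by valuation, using the constancy of $f_v$ on cosets of $K(\Gamma)_{f,v}$ together with orthogonality of $\chi_{1,v}$ on $\mathcal{O}_v^*$ to kill the tail in the ramified case. The substitution $y=x^{-1}$ and the explicit appeal to normality of the local principal congruence subgroup inside $\SL_2(\mathcal{O}_v)$ (the paper instead just invokes that the measure of each coset on which $f_v$ is constant is a power of $q^{-1}$) are cosmetic differences, not a new idea.

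One remark is off and should be removed. Your bookkeeping of the $|\pi_v|$-power ends with ``combining this with the companion contribution from the dual summand $V(m)^*$ in $\bar V(m)=V(m)\oplus V(m)^*$\ldots\ yields the announced $|\pi_v|^{-2n_vm}$''. There is no such pairing: the local intertwining integral \eqref{local_intertwine} acts on $f_v$ with values in a fixed $K$-type, and the $\pm$ splitting used throughout section \ref{intertwine_adele} is Kostant's holomorphic/antiholomorphic decomposition, not the $V(m)$-vs-$V(m)^*$ decomposition, so no ``companion'' contribution should be invoked. Fortunately this is harmless: your own stratified estimate already gives a denominator of at most $|\pi_v|^{-n_v(m+1)}$, and since $n_v(m+1)\le 2n_vm$ for $m\ge 1$ this is strictly stronger than the stated $|\pi_v|^{-2n_vm}\overline{\ZZ}$, so the lemma's bound follows without the dual-summand step. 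Strike that sentence and the argument is clean.
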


\begin{proof}
We prove (i) first. For $x\in F_v, |x|_v>1$ we have the Iwasawa decomposition
$$
w_0 \begin{pmatrix} 1 & x \\ 0 & 1 \end{pmatrix} = \begin{pmatrix} x^{-1} & -1 \\ 0 & x\end{pmatrix} \begin{pmatrix} 1 & 0\\x^{-1} & 1 \end{pmatrix}. 
$$
On the other hand for $g = nak$ we have $f_v(g) = \alpha(a)^{s}\chi_{1,v}(a)f(\id)$. Hence we get 
\begin{align*}
I_v &= \int_{|x|_v > 1} |x|_v^{-2s} \chi_{1,v}(x)^{-1} f_v\begin{pmatrix} 1 & 0\\x^{-1} & 1 \end{pmatrix} dx + \int_{\so_v} f_v\begin{pmatrix}  0 & -1 \\ 1 & x \end{pmatrix} dx \\
   &= \left(\int_{|x|_v > 1} |x|_v^{-2s}\chi_{1,v}(x)^{-1} dx + \int_{\so_v}1dx \right) f_v(\id) \\
   &= \left( \sum_{k\ge 1} (1-q^{-1})q^k \cdot \chi_{1,v}(\pi_v)^kq^{-2sk} + 1\right) f_v(\id) \\
   &= \left(1 + \frac{(\chi_{1,v}(\pi_v)q^{2s-1})^{-1}(1 - q^{-1})} {1 - \chi_{1,v}(\pi_v)q^{-2s+1}}\right) f_v(\id) = \frac{1 - \chi_{1,v}(\pi_v)q^{-2s}}{1 - \chi_{1,v}(\pi_v)q^{-2s+1}} \cdot f_v(\id). 
\end{align*}

Now let us prove (ii). It obviously suffices to deal with $k_v = \id$. Recall from the preceding proof that :
\begin{equation}
I_v = \int_{|x|_v > 1} |x|^{-2s} \chi_{1,v}(x) f_v\begin{pmatrix} 1 & 0\\x^{-1} & 1 \end{pmatrix} dx + \int_{\so_v} f_v\begin{pmatrix}  0 & -1 \\ 1 & x \end{pmatrix} dx. 
\label{dec_loc_int}
\end{equation}
The second term is a linear combination of integers (values of $f_v$) with coefficients in $\ZZ[q^{-n}]$ (the measure of a coset on which $f_v$ is constant is equal to $q^{-n}$) and hence lies in $q^{-n}\,\ovl\ZZ$. It remains to deal with the second term. Since $\chi_1$ is ramified at $v$ we have for any $k\in\ZZ$ the equality
$$
\int_{|x|_v = q^k} \chi_{1,v}(x) dx = 0. 
$$
It follows that :
\begin{align*}
\int_{|x|_v > 1} |x|^{-2s} \chi_{1,v}(x) f_v\begin{pmatrix} 1 & 0\\x^{-1} & 1 \end{pmatrix} dx &= \int_{1<|x|_v\le q^n} |x|^{-2s}\chi_{1,v}(x) f_v\begin{pmatrix} 1 & 0\\x^{-1} & 1 \end{pmatrix} dx \\
    &= \sum_{a\in v\so_F/v^n} |a|_v^{2s-2} \chi_{1,v}(a) f_v\begin{pmatrix} 1 & 0 \\ a & 1 \end{pmatrix} |a|^{-1} \int_{1+a^{-1}v^n\so_v} \chi_{1,v}(x) dx. 
\end{align*}
Now there are two possibilities for the integral on the second line: either $\chi_{1,v}$ is trivial on $1+a^{-1}v^n$, in which case the integral equals $q^{-n}|a|^{-1}$, or the integral vanishes. In either case it lies in $q^{-n}\ZZ$, hence the sum lies in $q^{-(2s-1)n}\ovl\ZZ$. 

The proof of (iii) is just a combination of those of (i) and (ii). The decomposition \eqref{dec_loc_int} is still valid, and the estimate for the denominator of the first factor done there is still valid. For the first one we have :
\begin{align*}
\int_{|x|_v < 1} |x|_v^{2-2s}\chi_{1,v}(x) f\begin{pmatrix} 1 & 0 \\ x & 1 \end{pmatrix} dx &= \sum_{a\in\so_F/v^n} f_v\begin{pmatrix} 1 & 0 \\ a & 1 \end{pmatrix} \int_{a+v^n\so_v} |x|_v^{2-2s}\chi_{1,v}(x) dx \\
       &= q^{-n} \sum_{a\not= 0} \begin{pmatrix} 1 & 0 \\ a & 1 \end{pmatrix} \chi_{1,v}(a)|a|_v^{2s-2} + f_v(\id)\int_{|x|_v \le q^{-n}} |x|_v \chi_{1,v}(x) dx 
\end{align*}
(since $\chi_1$ is not ramified at $v$ it is constant on every coset $a+v^n\so_v$). The terms with $a\not=0$ belong to $q_v^{-(2s-1)n}\,\ovl\ZZ$, and the same computation as in the proof of (i) yields that the last term lies in $q^{-2sn} \frac{L_v(\chi_1,2s)}{L_v(\chi_1,2s-1)} \ovl\ZZ$.
\end{proof}

Finally, the term $J_\infty(\sigma_{-m-2},m/2)(f_\infty)$, which is always a ratio of $\Gamma$-functions, can be described explicitly in the present case. 
There is  $\Phi\in \left((\nL_P\oplus\aL_P)^*\otimes V(\rho(m))\right)^{\sigma}$ such that $f_\infty=\Phi_{m/2}$. Moreover, there is an $M_\infty$-equivariant isomorphism
\[
\nu_1(\rho_m)(w_0): \left((\nL_P\oplus\aL_P)^*\otimes V(\rho(m))\right)^{\sigma}\cong \left((\nL_P\oplus\aL_P)^*\otimes V(\rho(m))\right)^{w_0\sigma}.
\]
The representation $\nu_1(\rho_m)$ of $K$ is not irreducible. However, if $\nu_{m+2}$ denotes the representation of $K$ of highest weight $m+2$ in the canonical parametrization, then $\nu_{m+2}$ occurs with multiplicity one in  $\nu_1(\rho_m)$ 
and belong to the $\nu_{m+2}$-isotypical subspace. Thus we have
\begin{align}\label{eqJinfty1}
J_\infty(\sigma_{-m-2},m/2)(\Phi_{m/2})=c_{\nu_{m+2}}(\sigma_{-m-2}:m/2)\cdot\left(\nu_1(\rho_m)(w_0)\Phi\right)_{-m/2},
\end{align}
where $c_{\nu_{m+2}}(\sigma_{-m-2}:m/2)\in\C$ is the value of generalized Harish-Chandra c-function. In the present case, the latter is known explitly. Namely, by \cite[Appendix 2]{Co}, taking the different parametrizations into account, one has
\begin{align}\label{eqJinfty2}
c_{\nu_{m+2}}(\sigma_{-m-2}:m/2)=\frac{1}{\pi}\frac{1}{im+m+2}. 
\end{align}

%%%%%%%%%%%%%%%%%%%%%%%%%%%%%%%%%%%%%%%%%%%%%%%%%%%%%%%%%%%%%%%%%%%%%%%%%%%%%%%%

\section{Estimation of the denominator of the C-matrix}
\label{C_denominator}
\setcounter{equation}{0}
We keep the notation of the preceding section. Our goal here is to prove the following estimate for the denominator of the intertwining matrices. 

\begin{prop}\label{estimate_denominator}
Let $\Gamma$ be a (principal) congruence subgroup of $\Gamma_D$. Then there exists a constant $C_0(\Gamma)$ such that one can estimate
\[
\log |d_{Eis,\mathbf{C}}(\Gamma,\bar{\rho}_m)|\leq C_0(\Gamma)m \log(m)
\]
for all $m\in\mathbb{N}$. 
\end{prop}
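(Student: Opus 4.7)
The plan is to exploit the adelic decomposition set up in the previous section to reduce the estimate on $d_{Eis,\mathbf{C}}(\Gamma,\bar\rho_m)$ to local computations at each place, combined with a global integrality statement for values of Hecke $L$-functions \`a la Damerell.

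First I would transport the question to the adelic side. Via the isomorphisms $\mu_\pm(m)$ from \eqref{muplus} and \eqref{muminus}, the operator $\underline{\mathbf{C}}(\sigma_{-m-2},m/2)$ becomes the standard intertwining operator between $W_{\bar\rho_m}(\sigma_{-m-2},m/2)$ and $W_{\bar\rho_m}(\sigma_{m+2},-m/2)$. The Hecke decomposition \eqref{decHecke} breaks it as a finite direct sum $\bigoplus_\chi C(\chi)$ over $\chi\in\mathcal{H}(\sigma_{-m-2},m/2,K(\Gamma)_f)$; note that the cardinality of this set is bounded by a constant depending only on $K(\Gamma)_f$, hence only on $\Gamma$. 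Next, since the lattices $H^1(\partial\overline{X};\bar\Lambda(m))_\pm$ are stable under the action of Hecke characters with values in roots of unity up to bounded denominator, it is enough to bound, for each $\chi$, the denominator of $C(\chi)$ measured against the obvious $\ZZ$-structure coming from $\bar\Lambda(m)$ and the projection formula \eqref{projfchi}.

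Next I would factor $C(\chi)$ as the product of local operators, following Lemma \ref{LemIO} and equations \eqref{eqJinfty1}, \eqref{eqJinfty2}. The non-archimedean contribution splits into three kinds of factors:
\begin{itemize}
\item at the finitely many places $v$ dividing the level $\aL$, case (ii) or (iii) of Lemma \ref{LemIO} says the local operator has denominator bounded by a power of $N(\mathfrak{p}_v)$ of the form $N(\mathfrak{p}_v)^{2n_vm}$; multiplying these contributions gives a total denominator at most $N(\aL)^{2m}$, contributing $2m\log N(\aL)=O(m)$ to $\log d_{Eis,\mathbf{C}}$;
\item at the unramified places with $n_v>0$, the extra factor $L_v(\chi_{1,v},m)/L_v(\chi_{1,v},m-1)$ has to be combined with those of the next item;
\item at the places with $n_v=0$, case (i) of Lemma \ref{LemIO} gives exactly the ratio $L_v(\chi_{1,v},m)/L_v(\chi_{1,v},m-1)$.
\end{itemize}
Multiplying the third (and third-bis) bullet over all places, the global factor to control is the ratio $L(\chi_1,m)/L(\chi_1,m-1)$ of complete Hecke $L$-functions of the unitary finite-order twist $\chi_1$, up to a bounded correction at places dividing $\aL$. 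The archimedean $c$-function \eqref{eqJinfty2} contributes $1/(\pi(im+m+2))$; the factor $|im+m+2|$ produces at most $O(\log m)$ in the log-denominator, and the $\pi$ will be absorbed by the period normalization below.

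The final and main step is to show that the ratio $L(\chi_1,m)/L(\chi_1,m-1)$, which a priori is a transcendental number, becomes, after the normalization by $\pi$ above, an algebraic number whose denominator is bounded by $m^{Cm}$ for some $C=C(\Gamma)$. This is precisely where the work of Damerell \cite{Damerell1,Damerell2} intervenes: for our imaginary quadratic $F$ and our unitary Hecke character $\chi_1$ of controlled conductor, both $L(\chi_1,m)$ and $L(\chi_1,m-1)$ admit an expression $\Omega_\infty^{*}\cdot\alpha$ with $\Omega_\infty$ a fixed transcendental period depending only on $F$ and $\chi_1$ and $\alpha$ an algebraic number in a fixed number field whose denominator is a product of small primes to powers bounded by $O(m)$ (this comes from the fact that Damerell's algebraic values are related to evaluations of Eisenstein--Kronecker series whose $q$-expansion has bounded denominators). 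The period factors cancel in the ratio, and we get a single algebraic number whose absolute norm is bounded by $m^{Cm}$. Taking the relevant integer denominator (norm down to $\QQ$), this contributes $O(m\log m)$ to $\log d_{Eis,\mathbf{C}}(\Gamma,\bar\rho_m)$.

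Combining the three contributions ($O(m)$ from the ramified places, $O(\log m)$ from the archimedean factor, and the dominant $O(m\log m)$ from Damerell) and summing over the finitely many Hecke characters $\chi$ and the fixed number of cusps yields the claimed bound. The main obstacle is the last step: one needs an effective Damerell-type statement that controls the denominator of $L(\chi_1,m)/L(\chi_1,m-1)$ uniformly as $m$ varies, which is the reason why the argument is tied to the three-dimensional (imaginary quadratic) setting and does not immediately generalize to higher-dimensional hyperbolic manifolds.
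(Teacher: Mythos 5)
Your sketch follows the same overall route as the paper's proof: transport the intertwining operator to the adelic picture, decompose it along Hecke characters, split each $C(\chi)$ into local factors, use the explicit local computations (Lemma~\ref{LemIO} and the $c$-function formulas~\eqref{eqJinfty1},~\eqref{eqJinfty2}), and invoke Damerell's rationality and denominator bounds for Hecke $L$-values to control the global factor $\frac{1}{\pi}L(\chi_1,m)/L(\chi_1,m-1)$. The structure and the identification of the $O(m\log m)$ leading term from the $L$-function ratio are correct.

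The one place where your sketch is noticeably thinner than the paper's argument is the passage from the integral class $f$ to its Hecke-isotypical projection $f_\chi$. You assert that the lattices $H^1(\partial\overline{X};\bar\Lambda(m))_\pm$ are ``stable\dots up to bounded denominator'' under the Hecke decomposition, but this is too strong as stated: the decomposition~\eqref{decHecke} genuinely does not respect the $\ZZ$-structure, and the denominator incurred in passing to $f_\chi$ is not $O(1)$. The paper's Proposition~\ref{propHecke} proves that there is a fixed $\alpha\in\ovl\ZZ$ (depending on $\Gamma$ but not on $m$) such that $\alpha^m f_\chi$ is $\ovl\ZZ$-integral; the exponential factor $\alpha^m$ arises from the $T(F)$-component $\diag(t_{i,j},t_{i,j}^{-1})$ of the Iwasawa decomposition of $t_{P_j}^{-1}g_{P_i}$ acting by $t_{i,j}^{m+2}$ on the highest-weight vector, together with a similar $\beta^{m+2}$ bound on $\chi(t_{P_j})$. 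This denominator is \emph{not} bounded, but it contributes only $O(m)$ to $\log d_{Eis,\mathbf{C}}$ and is therefore absorbed in the dominant $O(m\log m)$ from Damerell. Your sketch should make this step explicit, because without it the bound does not obviously hold; everything else in your outline matches the paper's argument, including the reduction to a norm statement over $\ovl\ZZ$ (the analogue of~\eqref{auxiliary}) and the use of Damerell's estimates to bound that norm by $(m!)^A$.
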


Using the maps $\mu_\pm(m)$ from \eqref{muplus} and \eqref{muminus} we obtain distinguished integral lattices $\mu_+^{-1}(m)(H^1(\partial\overline{X};\bar{\Lambda}(m))_{+})$ in the space $W_{\bar{\rho}_m}(\sigma_{m+2},-m/2)$ and $\mu_-^{-1}(m)(H^1(\partial\overline{X};\bar{\Lambda}(m))_{-})$ in $W_{\bar{\rho}_m}(\sigma_{-m-2},m/2)$. More generally, if $R\subset \C$ is a ring with $\mathbb{Z}\subset R$ we will say that $f\in W_{\bar{\rho}_m}(\sigma_{m+2},-m/2)$ is defined over $R$ if it is in the image of $\mu_+^{-1}(m)(H^1(\partial\overline{X};\bar{\Lambda}(m))_{+}\otimes_{\Z}R)$ and we make the corresponding definition for $W_{\bar{\rho}_m}(\sigma_{-m-2},m/2)$. The decomposition of $W_{\bar{\rho}_m}(\sigma_{-m-2},m/2)$ with respect to Hecke characters given in \eqref{decHecke} does not respect the $\Z$-structure on this space just introduced. In other words, if $f\in W_{\bar{\rho}_m}(\sigma_{-m-2},m/2)$ is defined over $\mathbb{Z}$ and if we decompose
\begin{align}\label{Decf}
f=\sum_{\chi\in\mathcal{H}(\sigma_{-m-2},m/2,K(\Gamma)_f)}f_{\chi},
\end{align}
then we cannot expect the $f_\chi$ to be defined over $\Z$. However, we have the following Proposition which controls this defect. 

\begin{prop}\label{propHecke}
There exists an algebraic integer $\alpha\in\overline{\mathbb{Z}}$ which depends on the group $\Gamma$ but not on the representation $\rho(m)$ such that if 
$f\in W_{\bar{\rho}_m}(\sigma_{-m-2},m/2)$ is defined over $\mathbb{Z}$ then $\alpha^mf_{\chi}$ is defined over $\overline{\mathbb{Z}}$ for each $f_\chi$ in the decomposition \eqref{Decf}. 
\end{prop}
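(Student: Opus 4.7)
The plan is to expand $f_\chi$ using the projection formula \eqref{projfchi} and control separately the denominators of (i) the Hecke coefficients $\bar\chi(t_{P_j})$ and (ii) the translated values $f(t_{P_j}g_{P_k})$. Both will be shown to be bounded by the $(m+2)$-th power of a fixed algebraic integer depending only on $\Gamma$; the proposition's $\alpha^m$ bound then follows by replacing $\alpha$ with a suitable cube and inflating by a constant to handle the shift.

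For the coefficients, I compute from \eqref{def_character} (with $k=-m-2$, $\lambda=m/2$) that $\chi_\infty(\alpha)=\alpha^{m+2}$ for $\alpha\in F^*\subset\mathbb{C}^*$, so triviality of $\chi$ on the diagonal $F^*$ gives $\chi_f(\alpha)=\alpha^{-(m+2)}$. Since $F^*\backslash\mathbb{A}_f^*/U(\Gamma)_f$ is a finite group of order $h$, one may write $t_{P_j}^h=\alpha_j u_j$ with $\alpha_j\in\mathcal{O}_D\setminus\{0\}$ and $u_j\in U(\Gamma)_f$, and obtain
\[
\chi(t_{P_j})^h=\chi_f(\alpha_j u_j)=\alpha_j^{-(m+2)}.
\]
Setting $\beta:=\alpha_1\cdots\alpha_h\in\mathcal{O}_D$, one has $(\beta^{m+2}\chi(t_{P_j}))^h=(\beta^h/\alpha_j)^{m+2}\in\mathcal{O}_D$, so $\beta^{m+2}\chi(t_{P_j})\in\overline{\mathbb{Z}}$.

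Next I would control $f(t_{P_j}g_{P_k})$ via the adelic decomposition $t_{P_j}g_{P_k}=b\cdot g_{P_{l(j,k)}}\cdot g_{\infty,j,k}\cdot k_{f,j,k}$ of Section \ref{intertwine_adele}. Since $t_{P_j,\infty}=1$, one has $g_{\infty,j,k}=g_{P_{l(j,k)}}^{-1}b_\infty^{-1}g_{P_k}\in\SL_2(F)$, an element ranging over a finite set depending only on $\Gamma$. Using left $P(F)$- and right $K(\Gamma)_f$-invariance of $f$, Iwasawa decomposition of $g_{\infty,j,k}$ relative to $P_{l(j,k)}$, and the transformation law \eqref{def_space_W}, an explicit calculation parallel to the identity $\chi_\infty(\alpha)=\alpha^{m+2}$ yields
\[
f(t_{P_j}g_{P_k})=\alpha_{b_{j,k}}^{-(m+2)}\cdot\nu_1(\rho_m)(k_{j,k})^{-1}v_{l(j,k)},
\]
where $\alpha_{b_{j,k}}\in F^*$ is the torus entry of $b_\infty$, $k_{j,k}\in K_\infty$ has algebraic entries depending on the fixed decomposition, and $v_{l(j,k)}\in\bar\Lambda(m)$ is the integer value at identity of the $l(j,k)$-th component of $f$ under the isomorphism of Lemma \ref{Lemma}. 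All three data range through finite sets depending only on $\Gamma$, so one can choose $D\in\mathcal{O}_D\setminus\{0\}$ clearing both the denominators of $\alpha_{b_{j,k}}^{-1}$ and those of the symmetric-power matrix coefficients (which are polynomials of degree $m$ in the fixed entries of $k_{j,k}$, hence have denominators bounded by $D_0^m$ for a fixed $D_0$), yielding $D^{m+2}f(t_{P_j}g_{P_k})\in\bar\Lambda(m)\otimes\overline{\mathbb{Z}}$.

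Combining the two estimates with the overall factor $1/\kappa(\Gamma)$, the choice $\alpha_0:=\kappa(\Gamma)\beta D\in\overline{\mathbb{Z}}$ yields $\alpha_0^{m+2}f_\chi$ defined over $\overline{\mathbb{Z}}$, and taking $\alpha:=\alpha_0^3$ then gives $\alpha^m f_\chi$ defined over $\overline{\mathbb{Z}}$ for all $m\ge 1$ (the finitely many exceptional small $m$ being absorbed into $\alpha$). The main technical difficulty I anticipate is the archimedean character calculation: both the $A_P$-torus and $M_P$-parts of the Iwasawa decomposition separately contribute $(m+2)$-powers of $|\alpha_{b_{j,k}}|$ together with phase factors, and one must check that these combine (with the sign conventions embedded in $\chi_{\infty,\sigma_{-m-2},m/2}$) to the purely algebraic factor $\alpha_{b_{j,k}}^{-(m+2)}$; once this is verified, the rest is a routine matter of tracking finitely many explicit decompositions in $\SL_2(F)$.
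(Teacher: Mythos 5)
Your overall plan---expand $f_\chi$ via \eqref{projfchi} and bound separately the Hecke coefficients $\bar\chi(t_{P_j})$ and the translated values $f(t_{P_j}g_{P_k})$---is exactly the structure of the paper's proof. For the character part, your argument is a clean variant of the paper's: you use that $\chi_f(\alpha)=\alpha^{-(m+2)}$ on $F^*$ together with $t_{P_j}^h \in F^*\cdot U(\Gamma)_f$ and take $h$-th roots, whereas the paper observes that every eligible $\chi$ is $\tilde\chi^{m+2}$ for one of finitely many ``base'' characters $\tilde\chi$. Both work; yours is arguably more self-contained. (Minor slip: $\alpha_j$ lies in $F^*$, not necessarily in $\so_D$, but that is harmless.)

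The archimedean step is where your proposal diverges, and it is not correct as written. You assert
\[
f(t_{P_j}g_{P_k})=\alpha_{b_{j,k}}^{-(m+2)}\cdot\nu_1(\rho_m)(k_{j,k})^{-1}v_{l(j,k)},\qquad \alpha_{b_{j,k}}\in F^*\text{ the torus entry of }b_\infty,
\]
but the $b$ in the decomposition $t_{P_j}g_{P_k}=b\,g_{P_l}\,g_\infty\,k_f$ contributes nothing: $f$ is left-invariant under $P(F)N(\Ade)$ (not merely equivariant), so $f(b\,\cdot)=f(\cdot)$ and the archimedean value is $\Phi_{P_l,\lambda}(g_\infty)$, entirely determined by the Iwasawa decomposition of $g_\infty$ relative to $P_l$. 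Your formula is also internally inconsistent: if the Iwasawa $K$-part of $g_\infty$ really is some nontrivial $k_{j,k}\in K_\infty\setminus M_\infty$, then the $A_{P_l}$-factor $\xi_{P_l,\lambda+1}(a(g_\infty))$ is $\|(r,s)\|^{m+2}$ (a positive real involving a square root of a rational number), not an $(m+2)$-th power of an element of $F^*$; conversely, if the torus factor is $\alpha^{-(m+2)}$ with $\alpha\in F^*$, that forces $g_\infty\in P_l(F)$ up to $M_\infty$, in which case $\nu_1(\rho_m)(k_{j,k})^{-1}$ is absorbed into the character and no separate $K$-factor remains. You cannot have both. The paper's proof resolves this precisely by establishing the key point you skip: $g_\infty(i,j)\in P(F)$ (it deduces $k_\infty(i,j)\in M_\infty$ from the $\sigma_{-m-2}$-isotypicality of $f(t_{P_j}^{-1}g_{P_i})$). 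Once that is known, the value is $t_{i,j}^{m+2}\Phi$ with $t_{i,j}\in F^*$ the torus entry of $g_\infty$ itself (not of $b$), and the denominator bound is immediate. Your alternative route---keep the $K$-part and bound the denominators of the degree-$\sim m$ matrix coefficients of $\nu_1(\rho_m)(k_{j,k})^{-1}$ by $D_0^m$---is plausible in principle, but you do not actually carry it out, and in any case it is not what your displayed formula asserts. So there is a genuine gap at this step.
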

\begin{proof}
The character $\chi_{\infty,\sigma_{-m-2},m/2}$ is the character
\begin{align}\label{eqchim}
\chi_{\infty,m+2}:T(\C)\to\C,\:\: \chi_{\infty,m+2}\left(\begin{pmatrix} z &0\\ 0& z^{-1}\end{pmatrix}\right):=z^{m+2}
\end{align}
on $T(\C)$.  Let $\mathcal{H}^1(\nL_P;\bar{\Lambda}(m))_{-}$ denote the integral lattice in corresponding to $H^1(\partial\overline{X};\bar{\Lambda}(m))_{-}$. Without loss of generality, we may assume that $f=\mu_-^{-1}(\Phi)$,  where $\Phi\in H^1(\nL_P;\bar\Lambda(m))_{-}$. We fix $\chi\in\mathcal{H}(\sigma_{-m-2},m/2,K(\Gamma)_f)$. We have
\begin{align*}
\mu_{-}\left((\mu_{-}^{-1}(\Phi))_\chi\right)
=\sum_{i=1}^{\kappa(\Gamma)}(\mu_{-}^{-1}(\Phi))_\chi(g_{P_i}).
\end{align*}
On the other hand, by \eqref{projfchi}, for each $i$ we have
\begin{align}\label{eqmuplusPhi}
(\mu_{-}^{-1}(\Phi))_\chi(g_{P_i})=\frac{1}{\kappa(\Gamma)}\sum_{j=1}^{\kappa(\Gamma)}\chi(t_{P_j})
(\mu_{-}^{-1}(\Phi))(t_{P_j}^{-1}g_{P_i})
\end{align}
For each $i,j$ there exists a unique $l=l(i,j)\in\{1,\dots,h\}$ and a  $g_{\infty}(i,j)\in \SL_2(\C)$ such that 
\begin{align}\label{eqtg}
t_{P_j}^{-1}g_{P_i}=bg_{P_l}g_{\infty}(i,j)k,
\end{align}
where $b\in P(F)$, $k\in K_f(\Gamma)$.  We fix $g_{\infty}(i,j)$ satisfying \eqref{eqtg}. If $g_{P_{l}}\neq g_{P_1}=1$, then,  by definition one has $(\mu_{-}^{-1}(\Phi))(t_{P_j}^{-1}g_{P_i})=0$. One the other hand, if $g_{P_l}=1$, then by definition one has 
\begin{align}\label{eqmuplusPhi2}
(\mu_{-}^{-1}(\Phi))(t_{P_j}^{-1}g_{P_i})=\Phi_{m/2}(g_{\infty}(i,j)),
\end{align}
where $\Phi_{m/2}=\Phi_{P,m/2}\in \mathcal{E}_{P}(\sigma_{-m-2},m/2,\nu_1(\rho_m))$ is as in \eqref{defPhim}. Let $g_\infty(i,j)=p_\infty(i,j)k_\infty(i,j)$, where $p_\infty(i,j)\in P_\infty$, $k_\infty(i,j)\in K_\infty$. Then:
$$
\Phi_{m/2}(g_\infty(i,j)) = \nu_1(\rho_m)(k_\infty(i,j))^{-1}) \Phi_{m/2}(p_{\infty}(i,j)). 
$$
One has $\Phi_{m/2}(p_\infty(i,j))\in(\aL_P\oplus\nL_P)^*\otimes \bar{V}(m))^{\sigma_{-m-2}}$ and $\nu_1(\rho_m)(k_\infty(i,j))^{-1}\Phi_m(p_\infty(i,j))\in(\aL_P\oplus\nL_P)^*\otimes \bar{V}(m))^{\sigma_{-m-2}}$. It is easy to see that this implies $k_\infty(i,j)\in M_\infty$, i.e. $g_\infty\in P_\infty$. Moreover, together with \eqref{eqtg} it follow that $g_\infty\in P(F)$. Thus one can write
\[                                                    
g_{\infty}(i,j)=t(g_{\infty}(i,j))n(g_{\infty}(i,j)),
\]
$t(g_{\infty}(i,j))\in T(F)$, $n(g_{\infty}(i,k))\in N(F)$. We write $t(g_{\infty}(i,j))=\diag(t_{i,j},t_{i,j}^{-1})$ with $t_{i,j}\in F^*$. Then by \eqref{eqchim} and by the definition of $\Phi_{m/2}$ we have $\Phi_{m/2}(g_{\infty}(i,j))=t_{i,j}^{m+2}\Phi$. Thus if $\alpha_{i,j}\in\mathcal{O}_F^*$ is the denominator of $t_{i,j}$, i.e. $\alpha_{i,j} t_{i,j}\in\mathcal{O}_F^*$, it follows that 
\begin{align}\label{eqalphaPhi}
\alpha_{i,j}^{m+2}\Phi_{m/2}(g_\infty(i,j))\in H^1(\nL_P,\bar\Lambda(m))_{-}\otimes_{\mathbb{Z}}\mathcal{O}_F.
\end{align} 
Next, each Hecke character $\chi:F^*\backslash\mathbb{A}^*\to \C$ which is trivial on $U(\Gamma)$ and which satisfies $\chi_{\infty}=\chi_{m+2,\infty}$ is of the form $\tilde{\chi}^{m+2}$, where $\tilde{\chi}:F^*\backslash\mathbb{A}^*\to\C$ is a character which is trivial on $U(\Gamma)$ and satisfies $\tilde{\chi}_{\infty}(\diag(z,z^{-1}))=z$ for $z\in\mathbb{C}^*$. The set of such characters $\tilde{\chi}$ is finite. Thus it follow that there exists a $\beta\in\overline{\mathbb{Z}}$ such that:
\begin{align}\label{eqbeta}
\beta^{m+2}\chi(t_{P_j})\in\overline{\mathbb{Z}}
\end{align} 
for all $j=1,\dots,\kappa(\Gamma)$ and all $\chi\in\mathcal{H}(\sigma_{-m-2},m/2,K(\Gamma)_f)$. Combining \eqref{eqmuplusPhi}, \eqref{eqmuplusPhi2}, \eqref{eqalphaPhi} and \eqref{eqbeta} we get the statement in the proposition. 
\end{proof}

For a unitary Hecke character $\chi_1$ we consider the Hecke L-function
\[
L(\chi_1,s)=\prod_v L_v(\chi_1,v,s)
\]
Recall the for a place $v$ where $\chi$ does not ramify we defined the local factor $L_v$ in \eqref{def_local_L}; we take the convention that $L_v(\chi_1,s):=1$ if $\chi_1$ is ramified at $v$. The infinite product converges absolutely for $\Real(s)>1$ and admits a meromophic continuation to $\C$. For $a\in\ovl\QQ$ we shall denote by $|a|_{\overline{\Q}/\QQ}$ or simply $|a|$ its absolute norm given by
\[
|a|_{\ovl\QQ/\QQ} := \left|B\right|,\quad B = \prod_{b \in\Gal(\overline{\Q}/\Q)\cdot a} b\in\QQ,  
\]
where $\Gal(\overline{\Q}/\Q)$ is the absolute Galois group of $\Q$. The following proposition evaluates the denominator of the quotient of $L$-values appearing in the computation of the intertwining operators in  Lemma \ref{LemIO}. It is essentially contained in the work of Damerell \cite{Damerell1},\cite{Damerell2}: our argument consist in keeping track of norm estimates along the steps in the proof of the main theorem of \cite{Damerell1}. 

\begin{prop} \label{Damerell}
Let $\fri$ be an ideal in $\so_D$. There is $A\in\ZZ_{>0}$ such that for all unitary Hecke characters $\chi$ whose conductor divides $\fri$ and whose infinite part $\chi_\infty(z) = (z/|z|)^n = (\ovl z/z)^{n/2}$ for an even integer $n$, and for any integer $s\in\{0,\ldots,n/2\}$ there is an integer $a'\in\ovl\ZZ$ such that $|a'|\le (n!)^A$ and we have :
\begin{equation}
2^{n-2s+1} a' \frac{L(\chi,s)}{L(\chi,s-1)} \in \pi\ovl\ZZ. 
\label{damerell_plus}
\end{equation}
\end{prop}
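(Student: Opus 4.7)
My plan is to derive Proposition \ref{Damerell} as a refinement of the classical theorem of Damerell (\cite{Damerell1,\,Damerell2}) on algebraicity of critical values of Hecke $L$-functions of imaginary quadratic fields, with explicit control of denominators and of the powers of $\pi$ and $2$ that arise.

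First, I would apply Damerell's main theorem to the Hecke character $\chi$. The hypothesis $\chi_\infty(z) = (z/|z|)^n$ with $n$ even places $\chi$ in the algebraic infinity type $(n/2,-n/2)$, so that the Gamma factor of $L(\chi,s)$ is $\Gamma_\CC(s+n/2)=2(2\pi)^{-(s+n/2)}\Gamma(s+n/2)$ and the critical strip of integers is exactly $\{1,\dots,n/2\}$ (with $s=0$ handled by the functional equation). Damerell expresses each critical value as
\[
L(\chi,s) \;=\; \pi^{e_1(s)} \cdot 2^{e_2(s)} \cdot \Omega_\chi^{\,n} \cdot \alpha(\chi,s),
\]
where $\Omega_\chi\in\CC^*$ is a CM-period built from the elliptic curve with complex multiplication by $\so_D$ (it depends on $\chi$ and $n$ but is \emph{independent of $s$}), the integers $e_1(s),e_2(s)$ are read off from the Gamma factor, and $\alpha(\chi,s)$ lies in the cyclotomic extension $\QQ(\chi)$ of $F$ generated by the values of $\chi$.

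Second, I would form the ratio
\[
\frac{L(\chi,s)}{L(\chi,s-1)} \;=\; \pi^{\,e_1(s)-e_1(s-1)} \cdot 2^{\,e_2(s)-e_2(s-1)} \cdot \frac{\alpha(\chi,s)}{\alpha(\chi,s-1)}.
\]
The period $\Omega_\chi^{\,n}$ cancels because it does not depend on $s$. Chasing the duplication identity $\Gamma_\CC(s)\Gamma(s+1/2)$-type relations together with $\Gamma(s+n/2)/\Gamma(s-1+n/2) = s-1+n/2$ yields $e_1(s)-e_1(s-1)=1$ and $e_2(s)-e_2(s-1) = -(n-2s+1)$, producing exactly the factor $2^{n-2s+1}$ in the denominator and a single power of $\pi$ as claimed. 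The extra rational factor $s-1+n/2$ is a divisor of $n!$ and will be absorbed into~$a'$.

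Third, I would bound the denominator of $\alpha(\chi,s)/\alpha(\chi,s-1)$. The explicit formulas of \cite{Damerell1} express $\alpha(\chi,s)$ as a polynomial combination of values of Eisenstein--Kronecker--Lerch series at CM points attached to the conductor of $\chi$, with coefficients that are binomial/multinomial numbers in the variables $s$ and $n/2-s$. This immediately gives, for each embedding $\sigma\colon\QQ(\chi)\hookrightarrow\CC$, a bound $|\sigma\alpha(\chi,s)|\le (n!)^{A_1}$ with $A_1$ depending only on $\fri$ (since the relevant Eisenstein--Kronecker values and the evaluation points are controlled by the conductor, which divides $\fri$). Let $a'$ be the denominator of the algebraic number $\frac{\alpha(\chi,s)}{\alpha(\chi,s-1)}\cdot(s-1+n/2)$, so that $a'\cdot 2^{n-2s+1}\cdot L(\chi,s)/L(\chi,s-1)\in\pi\ovl\ZZ$. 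Since $[\QQ(\chi):\QQ]$ is uniformly bounded in terms of $\fri$ (the ray class group modulo $\fri$), taking the product of all Galois conjugates of $a'$ gives $|a'|\le (n!)^A$ for some $A = A(\fri)$.

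\smallskip

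The main obstacle I anticipate is the bookkeeping in the second step: making sure the powers of $2$ coming from the conversion $\Gamma_\CC(s) = 2(2\pi)^{-s}\Gamma(s)$ combine with those implicit in the chosen normalization of Damerell's period $\Omega_\chi$ to produce precisely $2^{n-2s+1}$ with a single power of $\pi$. Once one fixes a canonical normalization of $\Omega_\chi$ (for instance, the one coming from a Néron differential on the CM elliptic curve attached to $\so_D$), this is a direct computation, but it has to be done carefully since any off-by-one in the exponents would spoil the final bound needed in Lemma~\ref{Lemreg1}.
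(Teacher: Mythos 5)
Your overall strategy is the right one, and it tracks the paper's own proof: express $L(\chi,s)$ as a CM period $\Omega^n$ times a power of $\pi$ (and of $2$) times an algebraic quantity $\alpha(\chi,s)$, observe that $\Omega^n$ cancels in the ratio $L(\chi,s)/L(\chi,s-1)$, and bound the resulting algebraic quotient. Both your argument and the paper's rest on Damerell's two papers. There are, however, two real gaps.

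First, in your third step you bound only $|\sigma\alpha(\chi,s)|$ for all embeddings $\sigma$, i.e.\ the absolute norm of $\alpha(\chi,s)$. This does \emph{not} control the denominator of $\frac{\alpha(\chi,s)}{\alpha(\chi,s-1)}$. To control that denominator you need two separate pieces of information: (a) a bound on the \emph{denominator} of the numerator $\alpha(\chi,s)$, and (b) a bound on the absolute norm of the denominator $\alpha(\chi,s-1)$, since for $\beta\in\ovl\ZZ$ one has $|\beta|_{\ovl\QQ/\QQ}\cdot\beta^{-1}\in\ovl\ZZ$. Point (b) is what your norm estimate provides, but point (a) is a separate statement. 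The paper obtains (a) from \cite[Theorem 2]{Damerell2} (see \eqref{damerell}), which gives an algebraic integer $a$ with absolute norm bounded independently of $s$ and $\chi_\infty$ such that $2^{n/2-s}a\,L(\chi,s)\in\Omega^n\pi^{-(n/2-s)}\ovl\ZZ$. Your proposal does not invoke this result, and without it the claim ``taking the product of all Galois conjugates of $a'$ gives $|a'|\le(n!)^A$'' does not follow: the conjugates of $a'$ could have much larger norm if $\alpha(\chi,s)$ itself had a large denominator.

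Second, the claim that the norm bound $|\sigma\alpha(\chi,s)|\le(n!)^{A_1}$ ``immediately'' follows from Damerell's explicit formulas is not a proof: it is precisely the content of Lemma~\ref{L_norm}, which occupies the bulk of the paper's argument. One has to follow Damerell's formula~(6.2), express $F_n$ via his Lemma~5.2 in terms of Weierstrass functions, and then track the sizes of the integer coefficients in the recursively defined polynomials $K^i_j$ using the recursion \eqref{rec_rel} together with the differentiation rules \eqref{wp_seconde}; it is only after this bookkeeping that the $(n!)^A$ bound emerges. Your proposal identifies the right places in Damerell's paper, but asserting the conclusion without this analysis leaves the essential work undone. (The exact power of $2$, which you worried about, is not actually a problem: any power of $2$ bounded polynomially in $n$ can be absorbed into $a'$ after enlarging $A$, so one does not need to match $2^{n-2s+1}$ on the nose.)
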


\begin{proof}
We first indicate how the arguments used in \cite{Damerell1} yield the following result, which is a more precise version of Theorem 1 in loc. cit. 

\begin{lem}
There is an $\Omega\in\CC^\times$ (depending only on the field $F$) such that the following holds. Let $n$ be an even integer, $\chi$ a unitary Hecke character of $F$ with infinite part $\chi_\infty(z) = (\ovl z/z)^{n/2}$ and $s$ an integer in the range $\{0,\ldots,n/2\}$. Then 
\begin{equation}
\pi^{n/2-s}L(\chi,s)/\Omega^n
\label{Lalg}
\end{equation}
is an algebraic number, whose degree over $\QQ$ is bounded by a constant depending only on $F$ and the conductor $\frf$ of $\chi$ and whose absolute norm is bounded by $C(n!)^A$ for positive integers $C,A$ depending only on $F$. 
\label{L_norm}
\end{lem}

\begin{proof}
{\em In the proof of this lemma all our numbered references are to Damerell's paper \cite{Damerell1}.} Damerell's statement includes only the algebraicity, but his arguments give the full statement above as we shall now explain. Formula (6.2) yields the following expression for the normalized $L$-value occuring above :
\begin{equation}
\pi^{n/2-s}L(\chi,s)/\Omega^n = M \cdot \sum_{i=1}^h \hat \fra_i \chi_f(\hat\fra_i)^{-1} \sum_{\beta\in \fra_i/\frb_i} \chi_f(\beta) \psi^p F_n(\beta\Omega,s,\Omega\frb_i)
\label{Lalg2} \tag{\dag} 
\end{equation}
where: 
\begin{itemize}
\item $M$ is an algebraic number depending on $\frf$ and $F$ ; 
\item $\fra_1,\ldots,\fra_h$ are integral ideals representing the elements in the class-group of $F$ ; 
\item $\frb_i = \frf\fra_i$ ; 
\item $\psi$ is a number depending on $F$ and $p = n/2-s$; 
\item $F_n$ is a particular function which we will analyze below; 
\item $\Omega\in\CC^\times$ is a particular number depending only on $F$. 
\end{itemize}

The next step is Lemma 5.2, which yields an expression of $F_n(\cdot,\cdot,\Lambda)$ in terms of arithmetic invariants of the elliptic curve $\CC/\Lambda$. More precisely, let $\wp$ be the Weierstrass function associated to this elliptic curve, and let $s>1$. Then we have : 
\begin{equation}
\psi^p F_n(z,s,\Lambda) = \sum_{t+u+v = p} \frac{p!}{t!u!v!} h(z)^t (-\varphi)^u (-1)^v K_{q-u}^v(z).
\label{F_n} \tag{$\ast$}
\end{equation}
where :
\begin{itemize}
\item $h(z)$ is a rational fraction (with coefficients in $\QQ$) in $\wp(kz),\wp'(kz),\wp''(kz)$ where $k=1,\ldots,\ell-2$, with $\ell$ the exponent of the finite abelian group $\fra_i/\frb_i$ (see Lemma 4.3) ; 
\item $K_j^i$ is a polynomial (with coefficients in $\QQ$) in $h,\wp(z),\wp'(z),g_2(\Lambda)$ and $g_3(\Lambda)$ and $\varphi$ (Corollary 4.1) ; 
\item $\varphi$ is a constant depending on the curve $\CC/\Lambda$. 
\end{itemize}

The key fact is then that all points at which the various $\wp$ occuring in \eqref{Lalg2} are estimated are of bounded finite order on the elliptic curves: this yields algebraic equations for the relevant values of $\wp$ and its derivative whose degree is bounded (depending on $F$ and $\frf$). The values of $g_2$ and $g_3$ are algebraic of degree depending on the elliptic curve (this is where the choice of $x$ enters, see the remark after Lemma 2.1). It then follows that the values of $\wp''$ are algebraic of bounded degree, as we can see by differentiating in $z$ the equation 
$$
(d\wp/dz)^2 = 4\wp^3 - g_2\wp - g_3
$$
satisfied by $\wp$, which yields
\begin{equation}
d^2\wp/dz^2 = 6\wp^2-g_2/2
\label{wp_seconde} \tag{$\flat$}
\end{equation}
(cf. (3.10),(3.11)). All of this proves that the factors $h(z)$ and $K_{q-u}^v$ in \eqref{F_n} are algebraic of bounded (depending on $F,\frf$) degree. It remains to deal with $\varphi$. Choosing a $\tau\in \so_F$, equation (6.1) yields the expression
$$
\varphi = (\tau\ovl\tau - \tau^2)^{-1}\sum_{\substack{\rho\in\tau b\frb_i/x\frb_i \\ \rho\not= 0}} \wp(\rho/\tau)
$$
which is algebraic of bounded degree. This finishes the proof that all factors of the summands in \eqref{F_n} are algebraic of bounded degree. To finish the proof that the normalized $L$-value itself is so we need only note that since the character $\chi_f$ is of bounded finite order (depending on $\frf$), its values in \eqref{Lalg2} are roots of unity of bounded degree. Thus all terms in \eqref{Lalg2} are algebraic integers of bounded degree. 

\medskip

Now we must bound the absolute norm of the right-hand side in \eqref{Lalg2}. It is obvious from \eqref{Lalg2},\eqref{F_n} and the proof that the degree is bounded that it suffices to prove that the valuation of $K_i^j$, for $2\le i+2\le j\le n$ is bounded by $C(n!)^A$ for some constant $C$. To do this we must return to the arguments of Damerell; in the proof of Lemma 3.3 he shows that for $j\ge 2$ one has 
$$
K_j^0(z) = (-1)^j \frac{d^{j-2}\wp(z)}{dz^{j-2}}. 
$$
From this an easy recursive argument using the identity \eqref{wp_seconde} allows to prove that for $j\ge 2$ $K_j^0(z)$ is a polynomial in $\wp(z),\wp'(z)/2$ and $g_2/12$ of degree less than $2j$ in each variable, with coefficients in $\ZZ$ that are $\ll (2j)!\cdot N^j$ for some integer $N\in\ZZ_{>0}$ ; we will denote this polynomial by $P_j^0 \in \ZZ[X_1,\ldots,X_4]$ (the last variable represents $g_3/4$). 

Damerell proves that for $0\le i<j$ there is a polynomial $P_j^i\in \ZZ[X_1,\ldots,X_4]$ such that $K_j^i(z) = P_j^i(\wp(z),\wp'(z)/2,g_2/12,g_3/4)$. For this he uses the recurrence relation (3.12), which is :
\begin{equation}
K_{j+1}^{i+1}(z) = i \frac{\wp'(z)} 2\cdot \frac 1 j K_j^{i-1}(z) - i\frac{g_2}{12}\cdot \frac 1 j K_{j-1}^{i-1}(z) - \frac 1 j DK_i^j(z)
\label{rec_rel} \tag{$\sharp$}
\end{equation}
where $D$ is a differential operator (in both second variables of $\wp$). It is given explicitely for $\wp,\wp',g_2$ and $g_3$ in the equalities (3.7), which we rewrite here :
\begin{gather*}
Dg_2 = -6g_3, \quad Dg_3 = -\frac 1 3 g_2^2,\\
D\wp = -2\wp^2 - \frac{g_2}3,\quad D\wp' = -3\wp\cdot\wp'. 
\end{gather*} 
Together with \eqref{rec_rel} these finally yield that the degree of $P_j^i$ in each variable is less than $2(i+j)$ and the coefficients are majorized by $2(i+j)!N^{(i+j)}$ for some $N\in\ZZ_{>0}$. It follows that for the values of $z$ occuring in \eqref{Lalg2} we have $|K_{q-u}^v(z)| \ll n!N^n$ at each place, hence the absolute norm is bounded by $(n! N^n)^{[E:\QQ]}$. This finishes the proof of our statement.  
\end{proof}

Bounds for the denominators of special values of $L$-functions are also given by the work of Damerell. We will use the following statement to evaluate denominators of the $L$-part of the intertwining integrals: let $n$ be an even integer, $\chi$ be a unitary Hecke character of $F$, with infinite part
$$
\chi_\infty(z) = (\ovl a/a)^{n/2}
$$
and finite part $\chi_f$ of conductor $\fri$. Then \cite[Theorem 2]{Damerell2} states that for the complex number $\Omega\in\CC^\times$ appearing in Lemma \ref{L_norm} there is an algebraic integer $a\in\ovl\ZZ$, whose absolute norm $|a| = |a|_{\ovl\QQ/\QQ}$ is bounded independently of $s,\chi_\infty$, such that for all integers $s\in\{0,\ldots,n/2\}$ we have
\begin{equation}
2^{n/2-s} a \cdot L(\chi,s) \in \Omega^n/\pi^{n/2-s}\,\ovl\ZZ.
\label{damerell}
\end{equation}
The proposition follows from this and the bound for the abolute norm of normalized $L$-values given in Lemma \ref{L_norm} (the transcendental factors cancel between the numerator and denominator). 
\end{proof}

We can finally put everything together to estimate the denominators of the $\C$-matrix. 

\begin{proof}[Proof of Proposition \ref{estimate_denominator}]
Let $X_\pm$ be the integral vectors in $\nL_P$ of weight $\pm 2$ for $\suL_2$ and $v_\pm$ the vectors of weight $\pm m$ for $\suL_2$ in $\bar V(m)$. Then a cohomology class $\omega_\pm$ in $\mathcal{H}^1(\nL_P ; \bar V(m))_\pm$ is integral if and only if $\int_{c_\pm}\omega\in\ZZ$ where $c_\pm$ is the 1-cycle on $\Gamma_{P_\infty}\bs\HH^3$ with coefficients in $\bar V(m)$ associated to $X_\pm$ and $v_\pm$. Moreover we have 
$$
\int_{c_\pm}\omega_\pm = \mu_\pm(m)^{-1}(\omega)(\id), \quad \int_{c_\pm}\underline{\mathbf{C}}(\omega_\pm) = \mathbf{C}\left(\mu_\mp(m)^{-1}(\omega)\right)(\id). 
$$
Likewise a cohomology class $\omega_\pm\in\mathcal{H}^1(\nL_{P_i} ; \bar V(m))_\pm$ with coefficients in $\bar V(m)$ is integral if and only if the value of a function $f = \mu_{\pm}^{-1}(\omega_\pm)$ at $g_{P_i}$ is integral (up to an at most exponential factor in $m$ coming from the non-integrality of the cycle associated to $\ad(g_{P_i})\cdot X_\pm$ and $\bar\rho_m(g_{P_i})\cdot v_\pm$), and we have the same equivariance property viz. the operators $\underline{\mathbf{C}}$ and $\mathbf{C}$.

The functions $\mu_\pm(m)^{-1}(\omega)(g_{p_i})$ on $G(\Ade)$ are $K_f'$-invariant where $K_f'$ is the compact-open subgroup $\bigcap_{i=1}^h g_{P_i}K_f(\Gamma)g_{P_i}^{-1}$ of $G(\Ade)$. So the statement in the proposition reduces to the following claim: let $K_f'$ be a compact-open subgroup in $\G(\Ade_f)$ and $f\in W_{\bar\rho_m}(\sigma_{m+2},-m/2)$ corresponding to a rational integral cohomology class (the latter being defined as above, with $K_f(\Gamma)$ replaced by $K_f'$). Then we claim that there are $N,C,A\in\ZZ_{>0}$ depending only on $K_f'$ such that we have 
\begin{equation*}
\mathbf{C}(s(m)) f(\id) \in C^{-1}N^{-m}(m!)^A\ZZ f(\id). 
\end{equation*}
To prove we note that it suffices to prove a similar result over $\ovl\ZZ$, namely that for all $f$ as above corresponding to a cohomology class with coefficients in $\bar\Lambda(m)\otimes\ovl\ZZ$ we have 
\begin{equation}
\mathbf{C}(s(m)) f(\id) \in a^{-1}\ZZ f(\id) 
\label{auxiliary}
\end{equation}
for an algebraic integer $a$ with $|a|_{\ovl\QQ/\QQ} \le C(m!)^A$
(indeed, since we know a priori that if the right-hand side is defined over $\QQ$ the proposition follows by thaking the product of Galois conjugates of each side). 

Let us prove \eqref{auxiliary}. First, it follows from Proposition \ref{propHecke} that it suffices to prove it for $f\in W_\chi$. By Lemma \ref{LemIO} and \eqref{eqJinfty1}, \eqref{eqJinfty2} we get that it suffices to prove that 
$$
\frac 1  \pi \cdot \frac{L(\chi,m)}{L(\chi,m-1)} \in b^{-1}\ovl\ZZ
$$
for an $a\in\ovl\ZZ$ with absolute norm $|b|\le(m!)^{A}$. This last statement follows from Proposition \ref{Damerell}. 
\end{proof}

%%%%%%%%%%%%%%%%%%%%%%%%%%%%%%%%%%%%%%%%%%%%%%%%%%%%%%%%%%%%%%%%%%%%%%%%%%%%%%%%

\section{Bounding the torsion from below}
\label{lower_bound}
\setcounter{equation}{0}
In this section we prove the estimate \eqref{esttorsb} (and also \eqref{esttorsh1}, which we actually need to prove the former) from our main Theorem \ref{Mainthrm}. 

\subsection{Torsion in $H^*(\Gamma,\bar\Lambda(m))$}

We first show directly that the order of the group $H^1(\Gamma;\bar{\Lambda}(m))_{tors}$ grows slower in $m$ than our leading term. Since we work with a split algebraic group the proof of this is simpler than that of the corresponding statement in \cite{MaM}. 

\begin{lem}\label{PropH1}
Let $\Gamma$ be a congruence subgroup of $\Gamma_D$. Then  
\[
\log|H^1(\Gamma;\bar{\Lambda}(m))_{tors}| = O(m\log m),
\]
as $m\to\infty$. 
\end{lem}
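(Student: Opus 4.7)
The plan is to reduce $H^1(\Gamma;\bar{\Lambda}(m))_{tors}$ to $H_0(\Gamma;\bar{\Lambda}(m))_{tors}$ by a duality argument, then bound the resulting group of coinvariants using two well-chosen elements of $\Gamma$.

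For the first step, observe that $\bar{\Lambda}(m) = \Lambda(m) \oplus \check{\Lambda}(m)$ is self-dual as a $\ZZ\Gamma$-module via the canonical swap $\Hom_\ZZ(\Lambda(m) \oplus \check{\Lambda}(m), \ZZ) \cong \check{\Lambda}(m) \oplus \Lambda(m)$. Since $\Gamma$ is torsion-free with finite $K(\Gamma,1)$ (namely $\bar X$), the cohomology $H^*(\Gamma;\bar{\Lambda}(m))$ is computed from a finite free $\ZZ\Gamma$-resolution $P_\bullet$ of $\ZZ$, and self-duality together with Hom--tensor adjunction gives
$$\Hom_{\ZZ\Gamma}(P_\bullet, \bar{\Lambda}(m)) \cong \Hom_\ZZ(P_\bullet \otimes_{\ZZ\Gamma} \bar{\Lambda}(m), \ZZ).$$
Hence the complex computing $H^*(\Gamma;\bar{\Lambda}(m))$ is the $\ZZ$-dual of a complex of finitely generated free $\ZZ$-modules computing $H_*(\Gamma;\bar{\Lambda}(m))$, and the universal coefficient theorem yields
$$H^1(\Gamma;\bar{\Lambda}(m))_{tors} \cong H_0(\Gamma;\bar{\Lambda}(m))_{tors} = (\bar{\Lambda}(m)_\Gamma)_{tors}.$$
Since both summands of $\bar{V}(m)$ are nontrivial irreducible $\SL_2(\CC)$-representations and $\Gamma$ is Zariski-dense, $\bar{V}(m)_\Gamma = 0$, so in fact $\bar{\Lambda}(m)_\Gamma$ is already a finite group equal to its own torsion part.

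For the second step, I pick any nontrivial unipotent $u \in \Gamma \cap N_P$ (which exists by \eqref{NP}, with upper-right entry $b \in \mathcal{O}_D$ depending only on $\Gamma$) and any $w \in \Gamma$ with nonzero lower-left entry (existing by Zariski density of $\Gamma$). Then $\bar{\Lambda}(m)_\Gamma$ is a quotient of $\bar{\Lambda}(m)/((u-1)\bar{\Lambda}(m) + (w-1)\bar{\Lambda}(m))$. A direct Smith-normal-form computation in a weight basis of $V(m)$ shows that $u-1$ is strictly triangular with leading non-zero entries $mb, (m-1)b, \ldots, b$, so the $\ZZ$-cokernel of $(u-1)$ acting on $\Lambda(m)$ has rank $2$ (from the opposite-weight line over $\mathcal{O}_D$) and torsion of order at most $\exp(O(m\log m))$. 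The same analysis for $\check{\Lambda}(m)$ yields an $(u-1)$-cokernel on $\bar{\Lambda}(m)$ of free rank $4$ and torsion size $\exp(O(m\log m))$. The element $w-1$ then acts on this remaining rank-$4$ free quotient by a matrix whose entries are polynomial of degree $\leq m$ in the fixed entries of $w$, producing a finite further cokernel of size $\exp(O(m))$. Taking logarithms,
$$\log|\bar{\Lambda}(m)_\Gamma| \leq O(m\log m) + O(m) = O(m\log m),$$
as claimed.

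The main technical effort lies in the Smith-normal-form computation for $u-1$, which is a straightforward but careful exercise with binomial coefficients (requiring attention to the $\mathcal{O}_D$- versus $\ZZ$-structure of $\Lambda(m)$); the rest is bookkeeping. The self-duality of $\bar{\Lambda}(m)$, introduced in the paper precisely to enable this kind of reduction, is essential for the universal coefficient step.
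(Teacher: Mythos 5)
Your proof is correct and takes essentially the same route as the paper's: reduce to $H_0(\Gamma;\bar{\Lambda}(m))_{tors}$ via self-duality of $\bar{\Lambda}(m)$ and the universal coefficient theorem, then bound the coinvariants explicitly using unipotent elements of $\Gamma$. The only cosmetic difference is that the paper uses the \emph{lower unipotent} $\bar{n}_a$ for the second element (giving the clean identity $(\rho_m(\bar n_a)-\Id)XY^{m-1}=aY^m$ and an extra factor independent of $m$), whereas you use a generic $w$ with nonzero lower-left entry, which introduces an extra $\exp(O(m))$ factor coming from $r^m$ but is still comfortably absorbed by the $O(m\log m)$ bound.
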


\begin{proof}
One has $H^1(\Gamma;\bar{\Lambda}(m))_{tors}\cong H_0(\Gamma;\bar{\Lambda}(m))_{tors}$ by the universal coefficient theorem. Let $\Lambda^0(m)$ denote the submodule of $\Lambda(m)$ generated by all $(\rho_m(\gamma)-\Id)v$, where $v\in\Lambda(m)$ and $\gamma\in\Gamma$. Then $H_0(\Lambda(m))=\Lambda(m)/\Lambda^0(m)$. There exists an $a\in\mathbb{N}$ such that for $n_a:=\begin{pmatrix}1&a\\ 0&1\end{pmatrix}$ and $\bar{n}_a:=\begin{pmatrix}1&0\\ a&1\end{pmatrix}$ one has $n_a,\:\bar{n}_a\in\Gamma$. If we let $X$, $Y$ denote the standard basis of $\C^2$, then $X^m,\:X^{m-1}Y,\:\dots,Y^m$ is a basis of $\Lambda(m)$ and in this basis, $\rho_m(n_a)-\Id$ is represented by an upper triangular nilpotent matrix. For $j> i$, the entry in the $i$-th row, $j$-th column of this matrix is given by $a^{j-i}\begin{pmatrix}j-1\\ j-i\end{pmatrix}$. Thus it follows inductively that $(l+1)aX^{m-l}Y^l\in \Lambda^0(m) $ for $0\leq l<m$. On the other hand, one has $(\rho_m(\bar{n}_a)-\Id)XY^{m-1}=aY^m$. Thus one has $|H_0(\Lambda(m))|\leq a^{m+1}m!$. i.e. $\log|H_0(\Lambda(m))|=O(m\log m)$ as $m\to\infty$. For $\check{\Lambda}(m)$ one can argue similarly.  
\end{proof}

The main result we prove here is the exponential growth of the torsion subgroup of $H^2(\Gamma,\bar\Lambda(m))$; in the remainder of this subsection we will show how all the work done in sections 4--6 implies the following result. 

\begin{prop}\label{aux_torsgrowth}
We have 
\begin{equation}
\liminf_{m\to+\infty} \frac {\log|H^2(\Gamma(\aL),\bar\Lambda(m))_{tors}|}{m^2} \ge \frac{\vol(X_\aL)}\pi \left( 1 - \frac{N(\aL_0) }{N(\aL)}\right)
\end{equation}
\end{prop}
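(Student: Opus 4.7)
The plan is to combine the identity \eqref{eqBV} at level $\aL$ with the difference asymptotic of Proposition \ref{asRT} and an elementary upper bound on the Reidemeister torsion at the fixed level $\aL_0$. Write $T^\rho_\aL = \log\tau_{Eis}(X_\aL; E_{\rho_m})$, $T^{\bar\rho}_\aL = \log\tau_{Eis}(X_\aL; E_{\bar\rho_m})$, $L_\aL = \log|H^2(\Gamma(\aL);\bar\Lambda(m))_{tors}|$, $l_\aL = \log|H^1(\Gamma(\aL);\bar\Lambda(m))_{tors}|$ and $R_k^\aL = \log\vol_{\mathcal{B}^k(\Gamma(\aL);\bar\rho_m)}(H^k(\bar X_\aL;\bar\Lambda(m))_{free})$ for $k=1,2$. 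Since $\bar\rho_m\cong\rho_m\oplus\rho_m$ over $\R$, we have $T^{\bar\rho}_\aL = 2T^\rho_\aL$, and \eqref{eqBV} reads $T^{\bar\rho}_\aL = l_\aL - L_\aL - R_1^\aL + R_2^\aL$.

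The first step will be to prove the intermediate estimate $L_\aL \ge -T^\rho_\aL - O(m\log m)$. Combining Lemma \ref{PropH1} (giving $l_\aL = O(m\log m)$), Lemma \ref{Lemreg1} together with Proposition \ref{estimate_denominator} (giving $R_1^\aL \le L_\aL + O(m\log m)$), and Lemma \ref{Lemreg2} with a complementary lower bound (giving $|R_2^\aL| = O(m\log m)$), the rearranged identity $L_\aL + R_1^\aL = l_\aL + R_2^\aL - T^{\bar\rho}_\aL$ yields $2L_\aL \ge -T^{\bar\rho}_\aL - O(m\log m)$, which is the claimed bound.

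The second step will be to apply Proposition \ref{asRT} to the pair $(\aL,\aL_0)$ and rearrange, obtaining
\[
-T^\rho_\aL = -\frac\beta\alpha\, T^\rho_{\aL_0} + \frac K\alpha\, m^2 + O(m\log m),
\]
where $\alpha = [\Gamma_D:\Gamma(\aL_0)]/(\#\mathcal{O}_D^*\, N(\aL_0))$, $\beta = [\Gamma_D:\Gamma(\aL)]/(\#\mathcal{O}_D^*\, N(\aL))$ and $K$ is the leading constant of Proposition \ref{asRT}. A direct computation using $\vol(X_\aL) = [\Gamma_D:\Gamma(\aL)]\vol(\Gamma_D\bs\mathbb{H}^3)$ gives $K/\alpha = \frac{\vol(X_\aL)}\pi\bigl(1 - N(\aL_0)/N(\aL)\bigr)$. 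At the fixed level $\aL_0$, applying \eqref{eqBV} together with $L_{\aL_0}\ge 0$ and the bounds $l_{\aL_0}, |R_k^{\aL_0}| = O(m\log m)$ yields the crude a priori upper bound $T^\rho_{\aL_0} \le O(m\log m)$. Combining the three ingredients we obtain
\[
L_\aL \ge \frac{\vol(X_\aL)}\pi\left(1 - \frac{N(\aL_0)}{N(\aL)}\right)m^2 - O(m\log m),
\]
and the result follows on dividing by $m^2$ and passing to the $\liminf$.

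The hard part will be establishing the required lower bounds $R_2^\aL \ge -O(m\log m)$ and $R_1^{\aL_0}, R_2^{\aL_0} \ge -O(m\log m)$; the upper bound on $R_1$ from Lemma \ref{Lemreg1} alone is of size $m^2$, which is far too weak. These lower bounds will follow from the explicit description of the basis $\mathcal{B}^k$ in terms of boundary data via \eqref{restrictionI} and \eqref{restrictionII}, together with the denominator estimate of Proposition \ref{estimate_denominator}: they imply that the heights of the Eisenstein cohomology classes in the integral lattice $H^k(\bar X_\aL;\bar\Lambda(m))_{free}$ are bounded by $\exp(O(m\log m))$, which translates into the desired lower bounds on the regulators.
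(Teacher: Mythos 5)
The proposal follows essentially the same route as the paper's proof: the same identity \eqref{eqBV}/\eqref{eqBVredux}, the same supporting Lemmas \ref{PropH1}, \ref{Lemreg1}, \ref{Lemreg2} and Propositions \ref{estimate_denominator}, \ref{asRT}, and the same two-step structure (first lower-bound $\log|H^2(\Gamma(\aL),\bar\Lambda(m))_{tors}|$ by $-\log\tau_{Eis}(X_\aL;E_{\bar\rho_m})$ up to $O(m\log m)$ errors, then invoke Proposition \ref{asRT} together with an a priori estimate for $\log\tau_{Eis}(X_{\aL_0};E_{\bar\rho_m})$ at the auxiliary level $\aL_0$). This is not a genuinely different route.

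What you do flag, usefully, is that lower bounds $\log R^2_\aL, \log R^1_{\aL_0}, \log R^2_{\aL_0}\ge -O(m\log m)$ on the regulators are required for the argument to close. This observation is correct and is glossed over in the paper: Lemmas \ref{Lemreg1} and \ref{Lemreg2} as stated only bound the regulators from \emph{above}, yet the display \eqref{negligible} asserts an absolute-value estimate on $\log R^2(X_\aL,\bar{\mathcal{L}}(m))$ citing only those upper bounds, and the step preceding \eqref{limfin2} deduces ``$\liminf(-\log R^1(X_{\aL_0})/m^2)\le 0$'' from Lemma \ref{Lemreg1} and \eqref{estb}, although those only give an \emph{upper} bound on $\log R^1$, not the lower bound one actually needs there. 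The needed lower bounds do hold and your proposed route is the right one: by \eqref{restrictionI}, \eqref{restrictionII} the restriction $\iota_k^*$ sends $\mathcal{B}^k$ isometrically onto explicit boundary data and sends $H^k(\overline X;\bar\Lambda(m))_{free}$ into the integral boundary lattice (using the $A(\bar\rho_m)$ construction and Proposition \ref{estimate_denominator} for $k=1$), which bounds the covolume below by a boundary-lattice covolume that is $\exp(O(m\log m))$. These steps are implicit in the proofs of Lemmas \ref{Lemreg1}, \ref{Lemreg2} but are not written out. One further point in favour of your formulation: you obtain the pointwise bound $\log\tau_{Eis}(X_{\aL_0};E_{\bar\rho_m})\le O(m\log m)$, which is a $\limsup$ statement, and that is what the final combination with \eqref{limfin1} actually requires; the paper records only $\liminf(\log\tau_{Eis}(X_{\aL_0})/m^2)\le 0$ in \eqref{limfin2}, which is formally too weak for the displayed $\liminf$-of-a-sum manipulation (one needs $\limsup\le 0$ rather than $\liminf\le 0$). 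So your proposal is correct in strategy, incomplete only in the spot where the paper itself is terse, and slightly more careful about the limit manipulations.
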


\begin{proof}
We first remark that it follows easily from the interpretation of $H^1(\partial\overline{X};\bar{\Lambda}(m))_{\pm}$ in terms of closed $E_{\bar\rho_m}$-valued holomorphic respectively antiholomorphic forms on the boundary $\partial\overline{X}$ that 
\begin{align}\label{estb}
[H^1(\partial\overline{X};\bar{\Lambda}(m))_{free}:H^1(\partial\overline{X};\bar{\Lambda}(m))_-\oplus 
H^1(\partial\overline{X};\bar{\Lambda}(m))_+]= O(m\log m),
\end{align}
as $m\to\infty$. 

Let $\mathcal{A}$ and  $C_1(\Gamma)$ be as in the end of section \ref{tors_cong}. Let $\aL$ be a non-zero ideal of $\mathcal{O}_D$ with $N(\aL)>C_1(\Gamma)$ and let $\aL_0\in\mathcal{A}$ such that $n_{l,\Gamma(\aL)}=n_{l,\Gamma(\aL_0)}$ for each $l=1,\dots,d_F$. For brevity we shall use the following notation in the remaining computations:
$$
R^i(X,\bar{\mathcal{L}}(m)) = \vol_{\mathcal B^i(\Gamma,\bar\rho_m)} \left( H^i(\Gamma,\bar\Lambda(m))\right). 
$$
With this notation \eqref{eqBV} becomes
\begin{equation}\label{eqBVredux}
\begin{split}
\log\tau_{Eis}(X; E_{\bar\rho_m}) &= \log R^2(X_\aL,\bar{\mathcal{L}}(m)) + \log|H^1(\Gamma(\aL),\bar\Lambda(m))_{tors}| \\ 
           &\quad\quad - \log R^1(X_\aL,\bar{\mathcal{L}}(m)) - \log|H^2(\Gamma(\aL),\bar\Lambda(m))_{\tors}|. 
\end{split}
\end{equation}

By Lemmas \ref{Lemreg2} and \ref{PropH1} we have that 
\begin{equation}
\left|\log\left(|H^1(\Gamma(\aL),\bar\Lambda(m))_{tors}|\right)\right|, \: |\log R^2(X_\aL,\bar{\mathcal{L}}(m))| \ll m\log(m). 
\label{negligible}
\end{equation}
On the other hand, by Lemma \ref{Lemreg1} together with Proposition \ref{estimate_denominator} and \eqref{estb} we have that
$$
\liminf_{m\to+\infty}\frac{\log R^1(X_\aL,\bar{\mathcal{L}}(m))}{m^2} \le \liminf_{m\to+\infty}\frac{\log|H^2(\Gamma(\aL),\Lambda(m))_{tors}|}{m^2}
$$
From \eqref{negligible} we get that in the expression \eqref{eqBVredux} for the Reidemeister torsion $\tau_{Eis}(X_\aL;E_{\bar\rho_m})$ all terms but for $\log|H^2|$ and $\log R^1$ are $O(m\log(m))$ and using the preceding inequality we get that
\begin{equation}
\liminf_{m\to+\infty} \left( 2\frac{\log|H^2(\Gamma(\aL),\Lambda(m))_{tors}|}{m^2} \right) \ge \liminf_{m\to+\infty}  \left(-\frac{\log\tau_{Eis}(X_{\aL}; E_{\bar\rho_m})}{m^2}\right). 
\label{limfin1}
\end{equation}

On the other hand, we also get from \eqref{estb} and Lemma \ref{Lemreg1} that
$$ 
\liminf_{m\to+\infty} \left(\frac{-\log R^1(X_{\aL_0},\bar{\mathcal{L}}(m))}{m^2}\right) \le 0 
$$
from which and \eqref{negligible} (used for $\aL_0$ instead of $\aL$) it follows that
\begin{equation}
\liminf_{m\to+\infty} \left(\frac{\log\tau_{Eis}(X_{\aL_0}; E_{\bar\rho_m})}{m^2}\right) \le 0. 
\label{limfin2}
\end{equation}

Putting together \eqref{limfin1} and \eqref{limfin2} we get that
\begin{multline*}
\liminf_{m\to+\infty} \left(2\frac{[\Gamma_D:\Gamma(\aL_0)]}{|\so_D^*|\cdot N(\aL_0)} \cdot \frac{\log|H^2(\Gamma(\aL),\Lambda(m))_{tors}|}{m^2} \right) \ge \\ 
          \liminf_{m\to+\infty} \left( \frac{-[\Gamma_D:\Gamma(\aL_0)]}{|\so_D^*|\cdot N(\aL_0)} \cdot \frac{\log\tau_{Eis}(X_{\aL}; E_{\bar\rho_m})}{m^2} + \frac{[\Gamma_D:\Gamma(\aL)]}{|\so_D^*|\cdot N(\aL)} \cdot \frac{\log\tau_{Eis}(X_{\aL_0}; E_{\bar\rho_m})}{m^2} \right). 
\end{multline*}
Finally, the right-hand side above converges to 
\begin{multline*}
2\cdot\frac{[\Gamma_D:\Gamma(\aL_0)]\cdot[\Gamma_D:\Gamma(\aL)]}{|\mathcal{O}_D^*|\pi} \left(\frac{1}{N(\aL_0)}-\frac{1}{N(\aL)}\right)\vol(\Gamma_D\backslash\mathbb{H}^3) \\
   = 2\frac{[\Gamma_D : \Gamma(\aL_0)]}{|\mathcal{O}_D^*|N(\aL_0)} \cdot \frac{\vol(X_\aL)}{\pi}\cdot\left( 1 - \frac {N(\aL_0)}{N(\aL)}\right)
\end{multline*}
by Proposition \ref{asRT} (which we are allowed to use for the representation $\rho_m\oplus\rho_m$ instead of $\rho_m$ since the analytic or Reidemeister torsion of the former is the square of that of the latter). This finishes the proof of Proposition \ref{aux_torsgrowth}. 
\end{proof}

%%%%%%%%%%%%%%%%%%%%%%%%%%%%%%%%%%%%%%%%%%%%%%%%%%%%%%%%%%%%

\subsection{Independance from the lattice}

Here we prove that $\log|H^2(\Gamma, \Lambda_m)_{tors}|$ does not depend on the choice of lattices $\Lambda_m\subset V(m)$ up to an error term of size $m\log(m)$. 

\begin{prop}\label{comp_homtors}
Let $\Gamma$ be a finite-index subgroup of the Bianchi group $\Gamma_D$. There is a constant $C$ depending only on $\Gamma$ such that for any $m\ge 1$ and any two $\Gamma$-invariant lattices $\Lambda_1,\Lambda_2$ in $V(m)$ we have 
$$
\left|\log\left(\frac{|H^2(\Gamma,\Lambda_1)_{tors}|}{|H^2(\Gamma,\Lambda_2)_{tors}|} \right) \right| \le Cm\log(m). 
$$
\end{prop}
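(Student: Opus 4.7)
The plan is to use the long exact sequence in group cohomology combined with a Schur-theoretic analysis of the set of $\Gamma$-invariant lattices in $V(m)$.

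First I would observe that scalar multiplication by any $c \in F^\times$ defines a $\Gamma$-equivariant isomorphism $\Lambda \to c\Lambda$ which preserves cohomology torsion. After an appropriate rescaling of $\Lambda_2$, I may arrange $\Lambda_2 \subset \Lambda_1$ (the transporter ideal $\{c\in F : c\Lambda_2\subset\Lambda_1\}$ is a nonzero fractional $\so_D$-ideal, since both lattices have full rank in the same rational vector space); set $A := \Lambda_1/\Lambda_2$, a finite $\Gamma$-module. Since $\Gamma$ has cohomological dimension two, the long exact sequence
\[
H^1(\Gamma, A) \to H^2(\Gamma, \Lambda_2) \to H^2(\Gamma, \Lambda_1) \to H^2(\Gamma, A) \to 0
\]
combined with the elementary bound $|M_{tors}|/|N_{tors}| \le \max(|\ker|, |\operatorname{coker}|)$ for a map of finitely generated abelian groups yields
\[
\left|\log\frac{|H^2(\Gamma, \Lambda_1)_{tors}|}{|H^2(\Gamma, \Lambda_2)_{tors}|}\right| \le \log|H^1(\Gamma, A)| + \log|H^2(\Gamma, A)|.
\]
Using a finite $K(\Gamma, 1)$ model with $c_i$ cells in dimension $i$ (for instance the interior of the Borel--Serre compactification of $\Gamma\bs\HH^3$), the cellular cochain complex gives $|H^i(\Gamma, A)| \le |A|^{c_i}$ for any finite $\Gamma$-module $A$, reducing the problem to bounding $\log|A| = \log[\Lambda_1 : \Lambda_2]$ after the rescaling.

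The key step is to show that the rescaling above can be chosen so that $|A|$ is bounded by a constant depending only on $\Gamma$. For this I would use that the $F$-rational form $V(m)_F := \Symm^m F^2$ of $V(m)$ is irreducible as $F[\Gamma]$-module (by Zariski density of $\Gamma$ in $\SL_2$), so $\End_{F[\Gamma]}(V(m)_F) = F$ by Schur's lemma. Consequently every $\Gamma$-invariant $\ZZ$-lattice in $V(m)$ is in fact an $\so_D$-lattice in $V(m)_F$, and any two such lattices of the same Steinitz class in $\operatorname{Cl}(\so_D)$ differ by $F^\times$-scaling---and hence have isomorphic cohomology groups. Since there are only $h_D := |\operatorname{Cl}(\so_D)|$ Steinitz classes, the comparison reduces to finitely many pairs of representatives. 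For each such pair, Steinitz's theorem gives decompositions $\Lambda_i \cong \so_D^m \oplus \mathfrak a_i$ as $\so_D$-modules; choosing the $F^\times$-rescaling to align the $\so_D^m$-summands, the index $[\Lambda_1 : \Lambda_2]$ reduces to an ideal quotient whose norm is bounded by the minimum norm of an integral representative of the fixed class $[\mathfrak a_1]^{-1}[\mathfrak a_2] \in \operatorname{Cl}(\so_D)$---a constant depending only on $\so_D$, hence only on $\Gamma$.

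The main obstacle is verifying that the abstract $\so_D$-Steinitz decompositions of the two lattices can be realized compatibly with their common embedding into $V(m)_F$ and with the $\Gamma$-action, so that the above reduction of $|A|$ to a single ideal quotient is genuinely valid. Making the Schur identification between isomorphic $\so_D[\Gamma]$-lattices sufficiently explicit to track the resulting $F^\times$-scalar is the delicate point; crucially this scalar depends only on class-group data of $\so_D$, so the final bound on $|A|$ is independent of $m$ (indeed much stronger than the claimed $O(m\log m)$).
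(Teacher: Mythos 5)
Your approach has a genuine gap, concentrated in the claim that the index $|A|=[\Lambda_1:\Lambda_2]$ (after the $F^\times$-rescaling) can be bounded independently of $m$. This is false, and the Schur--Steinitz argument you offer does not establish it. Schur's lemma gives $\End_{F[\Gamma]}(V(m)_F)=F$, so two $\Gamma$-invariant $\so_D$-lattices in $V(m)_F$ differ by an $F^\times$-scalar \emph{if and only if they are isomorphic as $\so_D[\Gamma]$-modules}. Having the same Steinitz class is a far weaker condition: it only asserts an abstract $\so_D$-module isomorphism, which need not be $\Gamma$-equivariant, so Schur does not apply and no scalar relation follows. A concrete counterexample is $\Lambda_1=\Lambda(m)=\Symm^m\so_D^2$ and $\Lambda_2$ its dual lattice with respect to the $\Gamma$-invariant pairing of Lemma \ref{comp_lattices}: for $h_D=1$ both are free $\so_D$-modules (same Steinitz class), yet the paper's estimate \eqref{dual_index} shows $m!\Lambda_1'\subset\Lambda_1\subset(m!)^{-1}\Lambda_1'$, so they are genuinely far apart, not scalar multiples. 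In fact Lemma \ref{comp_lattices} shows the optimal rescaled index satisfies only $a\Lambda_1\subset\Lambda_2\subset a(m!)^{-c}\Lambda_1$, giving $\log|A|\asymp m^2\log m$ because the $\ZZ$-rank of the quotient is $2(m+1)$.

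Even if you replace your incorrect $O(1)$ bound on $\log|A|$ by the correct $O(m^2\log m)$, the long-exact-sequence strategy falls short. Your estimate $|H^i(\Gamma,A)|\le|A|^{c_i}$ (with $c_i$ the number of $i$-cells) then gives $\log|H^i(\Gamma,A)|=O(m^2\log m)$, which \emph{swamps} the main term $m^2$ in the theorem and is therefore useless for the application, whereas the statement requires an $O(m\log m)$ error. The paper's proof sidesteps this by exploiting that the Reidemeister torsion $\tau_{Eis}(X;E_{\bar\rho_m})$ is a lattice-independent invariant, so that comparing \eqref{eqBVredux} for $\Lambda_1$ and $\Lambda_2$ reduces the problem to comparing regulators and $H^1$-torsion. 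Crucially, the regulator comparison of Lemma \ref{comp_reg} has exponent $\dim H^1(X,E_\rho)=\kappa(\Gamma)$, which is \emph{bounded independently of $m$}, rather than an exponent proportional to $\rk_\ZZ\Lambda(m)\sim m$ or to the number of cells. Combined with Lemma \ref{comp_lattices} this gives $O(m\log m)$. Your direct homological approach cannot reproduce this saving, because $H^i(\Gamma,A)$ for the finite module $A$ is controlled by the full rank of the cochain groups rather than by the small free rank of the integral cohomology.
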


We will deduce the proposition from the two next lemmas. 

\begin{lem}\label{comp_reg}
Let $\Gamma$ be a subgroup of a Bianchi group, $\rho$ be a representation of $\SL_2(\CC)$ on a vector space $V$ and $\Lambda,\Lambda'$ two $\rho(\Gamma)$-invariant lattices in $V$ such that $M\cdot\Lambda\subset\Lambda'\subset\Lambda$ for some integer $M\in\ZZ_{>0}$. Let $\mathcal L,\mathcal L'$ be the local systems on $X = \Gamma\bs\HH^3$ induced by $\Lambda,\Lambda'$ and $E_\rho$ the Euclidean bundle on $X$ induced by $\rho$. Then we have
$$
1 \le \frac{R^1(X,\mathcal L')}{R^1(X,\mathcal L)} \le M^{\dim H^1(X,E_\rho)}.  
$$
\end{lem}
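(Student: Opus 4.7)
The plan is to identify both regulators with covolumes of lattices in the same real vector space $H^1(X,E_\rho)$, equipped with a common inner product, and then bound the index between these lattices. Set $L_1 := H^1(\Gamma,\Lambda)_{\free}$ and $L_2 := H^1(\Gamma,\Lambda')_{\free}$. Since the basis $\mathcal B^1(\Gamma,\rho)$ is constructed from Eisenstein series attached to harmonic forms on the boundary strata and depends only on $\rho$ and the inner product on $V$, not on the choice of $\Gamma$-invariant lattice, the inner product making it orthonormal is the same in both cases. Hence $R^1(X,\mathcal L) = \covol(L_1)$ and $R^1(X,\mathcal L') = \covol(L_2)$ with respect to this common inner product, and as soon as $L_2 \subseteq L_1$ is a sublattice of finite index the ratio $R^1(X,\mathcal L')/R^1(X,\mathcal L)$ equals the index $[L_1:L_2]$.

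To show that $L_2$ is a sublattice of $L_1$ I would consider the short exact sequence of $\Gamma$-modules $0\to\Lambda'\to\Lambda\to\Lambda/\Lambda'\to 0$. Its long exact sequence in cohomology shows that the kernel of the induced map $\iota_*\colon H^1(\Gamma,\Lambda')\to H^1(\Gamma,\Lambda)$ is a quotient of the finite abelian group $H^0(\Gamma,\Lambda/\Lambda')$, so $\iota_*$ is injective on free parts. After tensoring with $\R$, the map $\iota_*$ becomes the identity on $H^1(X,E_\rho)$, so the map $L_2\to L_1$ induced by $\iota_*$ is, under the natural identifications inside $H^1(X,E_\rho)$, simply the inclusion of a sublattice.

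For the reverse control I would exploit the hypothesis $M\Lambda\subseteq\Lambda'$. Multiplication by $M$ defines a $\Gamma$-equivariant morphism $\pi\colon \Lambda\to\Lambda'$ such that $\iota\circ\pi = M\cdot\id_\Lambda$ and $\pi\circ\iota = M\cdot\id_{\Lambda'}$. Applying $H^1(\Gamma,-)$ yields $\pi_*\colon L_1\to L_2$ with $\iota_*\circ\pi_* = M\cdot\id_{L_1}$. Therefore $M\cdot L_1\subseteq L_2\subseteq L_1$, which gives the index bound
\[
[L_1:L_2]\leq M^{\rk_\Z L_1} = M^{\dim H^1(X,E_\rho)}.
\]

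The lemma then follows from $\covol(L_2) = [L_1:L_2]\cdot\covol(L_1)$: the inclusion $L_2\subseteq L_1$ gives the lower bound $R^1(X,\mathcal L')/R^1(X,\mathcal L)\geq 1$, while the index estimate gives the upper bound. No serious obstacle is anticipated; the only point requiring some care is to verify that the two regulators are computed with respect to the \emph{same} inner product on $H^1(X,E_\rho)$, so that the comparison reduces to the purely lattice-theoretic computation above.
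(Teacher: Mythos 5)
Your proposal is correct and essentially mirrors the paper's argument: both hinge on the chain of inclusions $M\cdot L_1 \subset L_2 \subset L_1$ of full-rank lattices in $H^1(X,E_\rho)$, together with the observation that the regulator is the covolume with respect to a single inner product independent of the lattice. The paper phrases the inclusion $M\cdot L_1\subset L_2$ by choosing cocycle representatives $c_1,\dots,c_h\in Z^1(X,\mathcal L)$ for the free part and noting that $Mc_i\in Z^1(X,\mathcal L')$; you reach the same conclusion more functorially via the maps $\iota_*,\pi_*$ with $\iota_*\circ\pi_*=M\cdot\id$, and you verify the inclusion $L_2\subset L_1$ explicitly via the long exact sequence of $0\to\Lambda'\to\Lambda\to\Lambda/\Lambda'\to 0$ (which the paper leaves implicit). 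That extra step is not strictly necessary---once you identify both lattices inside $H^1(X,E_\rho)$ the inclusion is automatic since $\iota_*\otimes\R$ is the identity---but it does no harm and makes the lower bound fully explicit.
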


\begin{proof}
Let $h = \dim H^1(X,E_\rho)$ and let $c_1,\ldots,c_h\in Z^1(X,\mathcal L)$ such that the cohomology classes $[c_1],\ldots,[c_h]$ generate the free part of $H^1(X,\mathcal L)$. Then each $M\cdot c_i$ belongs to $Z^1(X,\mathcal L')$ and together the $M\cdot[c_i]$ generate a finite-index subgroup of $H^1(X,\mathcal L')$. Thus we get 
$$
M\cdot H^1(X,\mathcal L) \subset H^1(X,\mathcal L')
$$
and the inequality 
$$
R^1(X,\mathcal L') \le [H^1(X,\mathcal L) : M\cdot H^1(X,\mathcal L)] R^1(X,\mathcal L) = M^h R^1(X,\mathcal L)
$$
follows immediately. 
\end{proof}

\begin{lem}\label{comp_lattices}
There is a constant $c\in\ZZ_{>0}$ depending on $\Gamma$ such that if $\Lambda_1,\Lambda_2$ are two $\Gamma$-invariant lattices in $V(m)$ then there exists $a\in\QQ$ such that
$$
a\Lambda_1 \subset \Lambda_2 \subset a(m!)^{-c}\Lambda_1.
$$
\end{lem}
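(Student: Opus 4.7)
The plan is to reduce the comparison of two arbitrary $\Gamma$-invariant lattices $\Lambda_1,\Lambda_2 \subset V(m)$ to a comparison of each with the canonical reference lattice $\Lambda_0 := \Symm^m\so_D^2$, and then to bound the commensurability index by $(m!)^c$ using the integral structure of $\rho_m$. The reduction is by transitivity: if there exist $a_i\in\QQ^\times$ and a universal $c$ with $a_i\Lambda_i \subset \Lambda_0 \subset a_i(m!)^{-c}\Lambda_i$ for $i=1,2$, then $a = a_1/a_2$ with constant $2c$ works for the pair $(\Lambda_1,\Lambda_2)$. To obtain $a\in\QQ$ rather than merely $a\in\CC^\times$, I first normalize $\Lambda_1,\Lambda_2$ to share a common $\QQ$-structure: by Schur's lemma the $\Gamma$-equivariant $\RR$-endomorphism ring of $V(m)$ equals $\CC$ (since $V(m)$ is $\CC$-irreducible under $\SL_2(\CC)$ and $\Gamma$ is Zariski-dense), so any two $\Gamma$-invariant $\QQ$-forms of $V(m)$ in $V(m)$ differ by a $\CC^\times$-scaling. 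Such a scaling induces an isomorphism of $\Gamma$-modules and hence preserves the torsion in cohomology, so this normalization is harmless for the application in Proposition~\ref{comp_homtors}.

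After the normalization we may assume $\Lambda_i\otimes\QQ = \Symm^m F^2$ for both lattices. I would then pass to the $\so_D$-saturation $L := \so_D\cdot\Lambda = \Lambda + \omega\Lambda$ (where $\so_D = \ZZ[\omega]$); this is a $\Gamma$-invariant $\so_D$-lattice in the absolutely irreducible $F$-representation $\Symm^m F^2$, whose $\Gamma$-$F$-commutant is $F$ by Schur. Consequently $\Gamma$-invariant $\so_D$-lattices in $\Symm^m F^2$ are classified up to $F^\times$-scaling by their Steinitz class in the finite class group $\mathrm{Cl}(\so_D)$, so $L$ is $F^\times$-commensurable with $\Lambda_0$ with index bounded by a constant depending only on $F$, uniformly in $m$.

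The main and hardest step is then to bound $[L:\Lambda]$ by $(m!)^c$ for a universal $c$ depending only on $\Gamma$. This uses the explicit integrality $\rho_m(\Gamma)\subset\SL_{m+1}(\so_D)$ in the canonical monomial basis $\{X^iY^{m-i}\}$ of $V(m)$: for a unipotent element $n_a\in\Gamma$ (possible for $a$ in some ideal of $\so_D$ contained in the level of $\Gamma$), the matrix of $\rho_m(n_a)$ in this basis has entries of the form $a^k\binom{m-i}{k}$, and the inverse of the integer part of such a matrix introduces denominators bounded by $m!$. Combinations of $n_a$, its opposite unipotent $n_a^-\in\Gamma$, and a suitably chosen diagonal element can be made to realize the scalar multiplication by $\omega$ as a $\QQ$-linear combination of actions of $\Gamma$-elements, with the total denominator bounded by $(m!)^c$. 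The hardest part will be establishing uniformity of $c$ in $m$: one must verify that a fixed finite set of group elements (depending only on $\Gamma$, not on $m$) suffices to generate the $\so_D$-scalar action modulo $(m!)^c$-torsion, which requires a careful weight-by-weight analysis exploiting the one-dimensionality of the weight spaces of $V(m)$ and the symmetric-power structure of $\rho_m$.
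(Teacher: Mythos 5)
Your approach diverges from the paper's and has genuine gaps. The central device in the paper's proof — and the source of the $(m!)$-type bound — is the nondegenerate $\Gamma$-invariant pairing $\langle\cdot,\cdot\rangle$ on $V(m)$ induced by the determinant form on $\CC^2$. Passing to the dual lattice $\Lambda_1'$ of $\Lambda_1$ under this pairing, one reads off from its coordinate expression (binomial coefficients) that $m!\Lambda_1'\subset\Lambda_1\subset(m!)^{-1}\Lambda_1'$. One then anchors a primitive highest-weight vector $u\in\Lambda_2$ into $\Lambda_1'$ by a rational scalar $a$, observes $\Lambda_3:=\langle\Gamma\cdot au\rangle\subset\Lambda_1'$ by $\Gamma$-invariance of the pairing, and finally compares $a\Lambda_2$ and $\Lambda_3$ to the standard monomial lattice via the action of a unipotent $n_z\in\Gamma$, as in the proof of Lemma \ref{PropH1}. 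Your proposal never introduces this pairing, so the factorial bound has to be manufactured elsewhere; but you explicitly flag that step (bounding $[L:\Lambda]$ by $(m!)^c$ uniformly in $m$) as the "hardest part" and do not carry it out. So the proposal is incomplete exactly where the lemma is nontrivial.

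Two of the steps you do commit to are not correct. The Steinitz-class reduction fails: $\Gamma$-invariant $\so_D$-lattices in $\Symm^m F^2$ are modules over the (generally non-maximal) order $\so_D[\rho_m(\Gamma)]\subset\End_F(\Symm^m F^2)$, and such lattices are not classified up to $F^\times$-scaling by their $\so_D$-Steinitz class; indeed already over $\ZZ$ the standard lattice and its determinant-dual are not $\QQ^\times$-scalings of each other for $m\geq 2$, though both have trivial Steinitz class. Even if such a classification held, it would be an isomorphism statement, not the quantitative commensurability bound the lemma requires. Separately, your plan to use "a suitably chosen diagonal element" of $\Gamma$ together with $n_a$ and $\bar n_a$ to realize multiplication by $\omega$ cannot work: the diagonal subgroup of $\SL_2(\so_D)$ consists of $\diag(u,u^{-1})$ with $u\in\so_D^*$, a finite group, so the torsion-free group $\Gamma$ contains no non-identity diagonal elements at all.
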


\begin{proof}
Let $\langle\cdot,\cdot\rangle$ be the pairing on $V(m) = \Symm^m\CC^2$ induced  by the determinant on $\CC^2\times\CC^2$ (which is hence nondegenerate and $\Gamma$-invariant) and let $\Lambda_1'$ be the $\langle\cdot,\cdot\rangle$-dual lattice of $\Lambda_1$ in $V(m)$, that is
$$
\Lambda_1' = \{v\in V(m) :\: \forall u\in\Lambda_1,\, \langle u,v\rangle\in\ZZ\}. 
$$
Then 
\begin{equation}
m!\Lambda_1' \subset \Lambda_1 \subset (m!)^{-1}\Lambda_1' 
\label{dual_index}
\end{equation}
as follows from the expression of $\langle,\rangle$ in coordinates (see for example \cite[2.4]{Berger}). Now let $u$ be a primitive vector in $\Lambda_2$ which is a vector of maximal weight for the standard parabolic subgroup of $\SL_2(\CC)$ in $V(m)$ (i.e. a rational multiple of $X^m$); there exists an $a\in\QQ$ such that $au$ is a primitive vector in $\Lambda_1'$. Then $\Lambda_3 := \langle\Gamma\cdot au\rangle \subset \Lambda_1'$: indeed, for any $v\in\Lambda_1$ and $\gamma\in\Gamma$ we have 
$$
\langle v,\gamma\cdot au\rangle  = \langle\gamma^{-1}\cdot v, au\rangle \in\ZZ. 
$$
From this and \eqref{dual_index} we get that $\Lambda_3\subset (m!)^{-1}\Lambda_1$. By arguments similar to those used in the proof of Lemma \ref{PropH1} (which we will detail after we explain how to conclude the proof from there) , we also have that 
\begin{equation} \label{contain}
a\Lambda_2\subset N^{-m}(m!)^{-2}\Lambda_3
\end{equation}
for some $N\in\ZZ_{>0}$ depending on $\Gamma$. It finally follows that
$$
a\Lambda_2\subset N^{-m}(m!)^{-3}\Lambda_1
$$
which proves half the lemma ; the second half also follows by a completely symmetric argument. 

Let us explain how \eqref{contain} is proved. For ease of notation we will suppose that $au = X^m$. We put $\Lambda_0 = \Lambda(m) = \so_D X^m\oplus\so_D X^{m-1}Y\oplus\ldots\oplus \so_D Y^m$. Let $z\in\ZZ$ such that $n_z\in\Gamma$ and $k\in\{1,\ldots,m\}$ ; suppose that $bX^{m-k}Y^k\in\Lambda_2$ for some $b\in F$. Then we get that 
$$
\Lambda_2 \ni n_z^k\cdot bX^{m-k}Y^k = bk!z^kX^m 
$$ 
so that $bk!z^m\in\so_D$. Thus we get that $\Lambda_2\subset (z^m m!)^{-1}\Lambda_0$. On the other hand the proof of Lemma \ref{PropH1} yields that $[\Lambda_0:\Lambda_3] \le z^mm!$, hence $\Lambda_2\subset (z^m m!)^{-2}\Lambda_3$. 
\end{proof}

\begin{proof}[Proof of Proposition \ref{comp_homtors}]
Let $\mathcal L_i$ be the local system on $X$ induced by the lattice $\Lambda_i$. We have by \eqref{eqBVredux} that 
$$
\frac{R^2(X,\mathcal L_1)\cdot |H^1(\Gamma,\Lambda_1)_{tors}|}{R^1(X,\mathcal L_1)\cdot |H^2(\Gamma,\Lambda_1)_{tors}|} = \tau_{Eis}(X, \rho_m) = \frac{R^2(X,\mathcal L_2)\cdot |H^1(\Gamma,\Lambda_2)_{tors}|}{R^1(X,\mathcal L_2)\cdot |H^2(\Gamma,\Lambda_2)_{tors}|}. 
$$
By Lemmas \ref{Lemreg2} and \ref{PropH1} we have that $H^1,R^2$ are $\ll m\log(m)$ for whichever lattice, and by Lemmas \ref{comp_reg} and \ref{comp_lattices} we get that 
$$
\left|\log\left(\frac{R^1(X,\mathcal L_1)}{R^1(X,\mathcal L_2)}\right) \right| \ll m\log(m)
$$
and we can thus conclude that the remaining terms $\log(H^2(\Gamma,\Lambda_i))$ in the Reidemeister torsion differ by at most $Cm\log(m)$ for some $C>0$ depending on $\Gamma$. 
\end{proof}

%%%%%%%%%%%%%%%%%%%%%%%%%%%%%%%%%%%%%%%%%%%%%%%%%%%%%%%%%%%%

\subsection{Conclusion}

Let $\bar\Lambda'(m)$ be the lattice $\Lambda(m)\oplus\Lambda(m)$ in $V(m)\oplus V(m)$. By Propositions \ref{comp_homtors} and \ref{aux_torsgrowth} we get that 
$$
\liminf\frac{\log|H^2(\Gamma(\aL), \bar\Lambda'(m))_{tors}|}{m^2} \ge \frac{\vol(X_{\aL})}{\pi}\left(1 - \frac{N(\aL_0)}{N(\aL)}\right). 
$$
Since $H^2(\Gamma(\aL), \bar\Lambda'(m)) \cong H^2(\Gamma(\aL), \Lambda(m))^2$ we get \eqref{esttorsb}. The estimate \eqref{esttorsh1} is proven exactly as in Lemma \ref{PropH1}.

%%%%%%%%%%%%%%%%%%%%%%%%%%%%%%%%%%%%%%%%%%%%%%%%%%%%%%%%%%%%%%%%%%%%%%%%%%%%%%%%

\section{Bounding the torsion from above}
\label{upper_bound}
\setcounter{equation}{0}
In this section we proof equation \eqref{esttorsab} from Theorem \ref{Mainthrm} : we give the proof for $\bar\Lambda(m)$-coefficients, the case of $\Lambda(m)$ follows immediately by Proposition \ref{comp_homtors}. The main ingredient is the following lemma, usually attributed to O. Gabber and C. Soul\'e (we note that it is also an important tool in V. Emery's a priori bound for the torsion in the homology of certain arithmetic lattices, see \cite{Eme}). We refer the reader to \cite[Lemma 3.2]{Sa} for a proof. 

\begin{lem}\label{LemmGS}
Let $A:=\mathbb{Z}^a$ with standard basis 
$(e_i)_{i=1,\dots,a}$ and let  $B:=\mathbb{Z}^b$. Equip 
$B\otimes_{\Z}\mathbb{R}$ with 
the Euclidan norm $\left\|\cdot\right\|$. Let 
$\phi:A\to B$ be $\Z$-linear and
assume that there exists $\alpha\in\R$ such that $\left\|\phi(e_i)\right\|\leq \alpha$ for each $i=1,\dots,a$. Then 
one has
\[
|coker(\phi)_{tors}|\leq\alpha^{min(a,b)}
\]
\end{lem}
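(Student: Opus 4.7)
The plan is to combine the Smith normal form of $\phi$ with Hadamard's inequality, using the classical identification of $|\coker(\phi)_{\tors}|$ as a product of elementary divisors, which in turn equals the GCD of a certain family of minors of the matrix of $\phi$. There is no substantial obstacle; all ingredients are classical.

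First I would apply the Smith normal form theorem to obtain $\ZZ$-bases of $A$ and $B$ in which $\phi$ is diagonal, with nonzero entries $d_1\mid d_2\mid\cdots\mid d_r$, where $r=\rnk\phi\le\min(a,b)$, followed by zeros. This yields
$$
\coker(\phi)\;\cong\;\ZZ^{b-r}\oplus\bigoplus_{i=1}^r\ZZ/d_i\ZZ,
$$
so that $|\coker(\phi)_{\tors}|=D:=d_1\cdots d_r$. Next I would invoke the standard invariance of the GCD of $k\times k$ minors of an integer matrix under left- and right-multiplication by elements of $\GL_a(\ZZ)$ and $\GL_b(\ZZ)$: applied to the reduction bringing $\phi$ to Smith normal form, this identifies $D$ with the GCD of all $r\times r$ minors of the matrix $M$ of $\phi$ written in the \emph{original} bases, namely the standard basis $(e_i)$ of $A$ and the standard basis of $B$.

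Because $\phi$ has rank $r$, some $r\times r$ submatrix $M'$ of $M$ has nonzero determinant, and $D\mid\det M'$ forces $D\le|\det M'|$. The $r$ columns of $M'$ are obtained by selecting $r$ of the coordinates of $r$ chosen columns of $M$, each of which is of the form $\phi(e_i)$; restricting to a subset of coordinates can only decrease the Euclidean norm, so each column of $M'$ has norm at most $\alpha$. Hadamard's inequality then gives
$$
|\det M'|\;\le\;\prod_{k=1}^r\|c_k\|\;\le\;\alpha^r\;\le\;\alpha^{\min(a,b)}
$$
(the last inequality uses $\alpha\ge 1$, which is the only case of interest since otherwise $\phi(e_i)\in\ZZ^b$ with $\|\phi(e_i)\|<1$ forces $\phi=0$ and the lemma is trivial). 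The only ``technical'' step is the invariant-factor description of $D$ as a GCD of minors, but this is entirely standard.
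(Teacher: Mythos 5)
The paper does not give its own proof of this lemma; it delegates it to Sauer \cite[Lemma 3.2]{Sa}. Your proof is correct and self-contained, and it is essentially the standard argument. The chain Smith normal form $\Rightarrow$ $|\coker(\phi)_{\tors}| = d_1\cdots d_r$ $\Rightarrow$ $d_1\cdots d_r$ equals the GCD of $r\times r$ minors of $M$ $\Rightarrow$ pick a nonzero such minor and bound it by Hadamard is airtight, and the observation that restricting a column of $M$ to a subset of its coordinates only decreases the Euclidean norm is exactly the right way to pass from the full columns $\phi(e_i)$ to the columns of the square submatrix $M'$.

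Two small remarks. First, on the edge case: if $\alpha<1$ (forcing $\phi=0$ as you note) and $\min(a,b)>0$, the lemma as stated is actually \emph{false}, not ``trivial'', since $|\coker(\phi)_{\tors}|=1>\alpha^{\min(a,b)}$; the statement is implicitly for $\alpha\ge 1$, which is the only regime that ever occurs in the application (where $\alpha$ bounds norms of nonzero integral vectors). Your proof is fine, you just shouldn't call that case ``trivial''. Second, an alternative route to the same conclusion, which I believe is closer to Sauer's original argument and avoids the determinantal-divisor machinery: let $W=\im(\phi\otimes\RR)$, $L=B\cap W$, so that $\coker(\phi)_{\tors}\cong L/\phi(A)$ and hence $|\coker(\phi)_{\tors}|=\covol_W(\phi(A))/\covol_W(L)$. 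One bounds $\covol_W(\phi(A))\le\alpha^r$ by choosing $r$ linearly independent columns $\phi(e_{i_1}),\ldots,\phi(e_{i_r})$ and applying Hadamard to the sublattice they span, and $\covol_W(L)\ge 1$ because the Gram matrix of any $\ZZ$-basis of $L\subset\ZZ^b$ is a positive-definite integer matrix, so its determinant is a positive integer. Both routes buy the same bound; yours is arguably more elementary in that it stays entirely within integer linear algebra, while the covolume version generalizes more readily to bounding Reidemeister torsions where one wants covolume ratios directly.
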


Next, we have the following elementary Lemma.
\begin{lem}\label{Lemnorm}
Let $\{v_j:=e_1^{m-i}e_2^j\colon i=0,\dots,m\}$, 
denote the standard integral basis of the lattice $\Lambda_m\subset V(m)$. 
Equip $V(m)$ with the inner product such that the $v_i$ form an 
orthonormal basis and let $\left\|\cdot\right\|_{\End(V_m)}$  
denote the corresponding norm on $\End(V_m)$. Then 
for each $\gamma\in\Gamma_D$ one has
\begin{align*}
\left\|\rho_m(\gamma)\right\|_{\End(V_m)}\leq\left\|\rho_1(\gamma)\right\|_{\End(V_1)}^{m}
\end{align*}
\end{lem}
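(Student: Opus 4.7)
The plan is to realize $V(m)=\Symm^m V_1$ as the symmetric part of the tensor power $V_1^{\otimes m}$ and bootstrap the estimate from the standard identity
\[
\|\rho_1(\gamma)^{\otimes m}\|_{\End(V_1^{\otimes m})} = \|\rho_1(\gamma)\|_{\End(V_1)}^m,
\]
valid when $V_1^{\otimes m}$ is given the tensor product of the standard inner product on $V_1$. This identity is elementary: for any $A\in\End(V_1)$, the singular values of $A^{\otimes m}$ are products of singular values of $A$, so the top singular value of $A^{\otimes m}$ equals the $m$-th power of the top singular value of $A$.

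Concretely, I would first introduce the $\SL_2(\CC)$-equivariant symmetrization
\[
S = \frac{1}{m!}\sum_{\sigma\in S_m}\sigma \colon V_1^{\otimes m}\to V_1^{\otimes m},
\]
an orthogonal projector onto the subspace $\Symm V_1^{\otimes m}$, which is canonically identified with $V(m)$ via the multiplication map $\psi(u_1\otimes\cdots\otimes u_m)=u_1\cdots u_m$. Under this identification the monomial basis vector $v_j = e_1^{m-j}e_2^j$ corresponds, up to a scalar, to the symmetrized tensor $S(e_1^{\otimes(m-j)}\otimes e_2^{\otimes j})$; the map $\psi$ intertwines $\rho_1(\gamma)^{\otimes m}$ with $\rho_m(\gamma)$, and the symmetric subspace is invariant under $\rho_1(\gamma)^{\otimes m}$. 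Combining this with the tensor-power identity then yields the desired bound, but with respect to the inner product on $V(m)$ restricted from $V_1^{\otimes m}$.

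The last step is to pass from this tensor-inherited inner product to the monomial-orthonormal one of the statement. In the tensor-inherited inner product the $v_j$'s are orthogonal with squared norms proportional to $\binom{m}{j}^{-1}$, so the two inner products are diagonal in the same basis and differ by bounded weights. One may either verify that the diagonal weights rescale cleanly through the argument (leaving the inequality intact in its stated form), or observe that the distortion contributes at worst a factor $\binom{m}{\lfloor m/2\rfloor}^{1/2}\le 2^{m/2}$, which is negligible at the logarithmic scale ($O(m)$ versus the $O(m^2)$ bounds targeted in Section~\ref{upper_bound}).

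The main obstacle, therefore, is not the operator-norm inequality itself -- which is a direct consequence of the tensor-power identity -- but the bookkeeping for the inner-product comparison at the end. An alternative, more hands-on proof would expand $\rho_m(\gamma)v_j = (ae_1+ce_2)^{m-j}(be_1+de_2)^j$ entry-by-entry and bound the matrix coefficients via repeated Cauchy--Schwarz on the binomial expansion, but the tensor-product viewpoint makes equivariance transparent and is both shorter and more natural.
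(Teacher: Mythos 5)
Your tensor-power argument is the right mechanism, but the hedge at the end must be resolved in favour of your second alternative: the inequality as printed is in fact \emph{false}, so there is no hope of ``verifying that the diagonal weights rescale cleanly.'' To see this, take $\gamma = \begin{pmatrix}1&1\\0&1\end{pmatrix}\in\Gamma_D$ and $m=2$. In the monomial-orthonormal basis $(v_0,v_1,v_2) = (e_1^2, e_1e_2, e_2^2)$ one has
\[
\rho_2(\gamma) = \begin{pmatrix}1&1&1\\0&1&2\\0&0&1\end{pmatrix},
\]
and computing singular values gives $\|\rho_2(\gamma)\|^2 = 4+\sqrt{15}\approx 7.87$, whereas $\|\rho_1(\gamma)\|^4 = \left(\tfrac{3+\sqrt 5}{2}\right)^2 = \tfrac{7+3\sqrt 5}{2}\approx 6.85$; so $\|\rho_2(\gamma)\| > \|\rho_1(\gamma)\|^2$. (In the inner product inherited from $V_1^{\otimes 2}$, where $v_1$ must be rescaled by $\sqrt 2$, the two sides coincide exactly, which is precisely the content of your projector argument.) The unipotent elements of $\Gamma(\aL)$ give analogous failures for all $m\ge 2$.

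The correct statement is therefore the one with the combinatorial slack you already identified:
\[
\|\rho_m(\gamma)\|_{\End(V_m)} \leq \binom{m}{\lfloor m/2\rfloor}^{1/2}\|\rho_1(\gamma)\|_{\End(V_1)}^{m} \leq 2^{m/2}\,\|\rho_1(\gamma)\|_{\End(V_1)}^m,
\]
obtained exactly as you say: restrict $\rho_1(\gamma)^{\otimes m}$ to the $\SL_2(\CC)$-invariant symmetric subspace, where the operator norm with respect to the inherited tensor inner product is at most $\|\rho_1(\gamma)\|^m$, then conjugate by the diagonal map $v_j\mapsto\binom{m}{j}^{1/2}v_j$ relating the two inner products, whose condition number is $\binom{m}{\lfloor m/2\rfloor}^{1/2}$. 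This weaker bound is all that Proposition \ref{propestab} needs: it amounts to replacing $c_0(\Gamma)$ there by $\sqrt 2\, c_0(\Gamma)$, which only changes the unspecified constants $c_\Gamma$ and, ultimately, $C_2(\Gamma_D)$ in \eqref{esttorsab}. The paper's own proof of this lemma is the single sentence ``This follows immediately from the definitions,'' which offers nothing to compare against; your reduction is the substantive argument, but you should state the inequality with the $2^{m/2}$ (or any $C^m$) factor rather than leave the literal bound standing.
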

\begin{proof}
This follows immediately from the definitions. 
\end{proof}

Now we can estimate the torsion from above as follows.

\begin{prop}\label{propestab}
Let $\Gamma$ be a finite index, torsion-free subgroup of $\Gamma_D$. Then 
there exists a consant $c_\Gamma$ such that one can 
estimate
\[
\log |H^2(\Gamma,\bar{\Lambda}(m))_{tors}|\leq c_\Gamma m^2
\]
for each $m\in\mathbb{N}$. 
\end{prop}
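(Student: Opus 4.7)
The plan is to realize $H^2(\Gamma,\bar\Lambda(m))$ as the cohomology of a \emph{finite} cochain complex and then apply the Gabber--Soul\'e lemma (Lemma \ref{LemmGS}) to the differential $d_1\colon C^1\to C^2$, exploiting Lemma \ref{Lemnorm} to control the entries. More precisely, since $\Gamma$ is neat the Borel--Serre compactification $\overline{X}$ is a compact manifold with boundary and hence admits a finite CW structure; equivalently we pick any finite $K(\Gamma,1)$-model $Y$. Letting $\widetilde{Y}$ be its universal cover and $C_\bullet(\widetilde Y)$ its cellular chain complex as a complex of free $\mathbb{Z}[\Gamma]$-modules with a fixed set of cell-lifts, the cochain complex $C^\bullet(Y;\bar\Lambda(m)):=\Hom_{\mathbb{Z}[\Gamma]}(C_\bullet(\widetilde Y),\bar\Lambda(m))$ computes $H^\bullet(\Gamma,\bar\Lambda(m))$. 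Its differentials are explicitly given by block matrices whose blocks are of the form $\sum_\ell n_{ij,\ell}\,\bar\rho_m(\gamma_{ij,\ell})$, where the integers $n_{ij,\ell}$ and the group elements $\gamma_{ij,\ell}\in\Gamma$ are determined by the incidence relations in the fixed CW structure and in particular form a finite set depending only on $\Gamma$ (not on $m$).

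Next I would equip $\bar\Lambda(m)\otimes_{\mathbb{Z}}\mathbb{R}$ with the Euclidean norm for which the obvious integral basis (two copies of the monomial basis $e_1^{m-j}e_2^j$ of Lemma \ref{Lemnorm}, one for $\Lambda(m)$ and its dual for $\check\Lambda(m)$) is orthonormal, and take the induced norms on each $C^k(Y;\bar\Lambda(m))\otimes\mathbb{R}$. Applying Lemma \ref{Lemnorm} to each $\gamma_{ij,\ell}$ appearing in $d_1$ (and trivially for $\check\rho_m$, since contragredient representations have the same norm bound up to a constant), I get that every matrix entry of $\sum_\ell n_{ij,\ell}\,\bar\rho_m(\gamma_{ij,\ell})$ has absolute value bounded by $N_\Gamma\cdot C_\Gamma^m$, where $N_\Gamma,C_\Gamma>0$ depend only on the finite data $(n_{ij,\ell},\gamma_{ij,\ell})$ and thus only on $\Gamma$. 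Consequently each basis vector of $C^1(Y;\bar\Lambda(m))$ is sent by $d_1$ to a vector of norm at most $\alpha_\Gamma(m):=N'_\Gamma\cdot C_\Gamma^m\cdot\sqrt{m+1}$ for a possibly adjusted constant $N'_\Gamma$.

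The next step is to observe that $H^2(\Gamma,\bar\Lambda(m))_{tors}$ embeds into $\coker(d_1)_{tors}$, since the natural inclusion $H^2\hookrightarrow C^2/\mathrm{im}(d_1)$ restricts to an injection on torsion. Lemma \ref{LemmGS} applied to $\phi=d_1\colon C^1\to C^2$ therefore gives
\[
|H^2(\Gamma,\bar\Lambda(m))_{tors}|\;\le\;|\coker(d_1)_{tors}|\;\le\;\alpha_\Gamma(m)^{\min(\rk C^1,\rk C^2)}.
\]
Since $\rk C^k(Y;\bar\Lambda(m))=n_k\cdot 2(m+1)$ where $n_k$ is the number of $k$-cells of $Y$, the right-hand side is $\exp\!\big(O_\Gamma(m)\cdot (m\log C_\Gamma+\log m)\big)=\exp(O_\Gamma(m^2))$, which yields the required bound $\log|H^2(\Gamma,\bar\Lambda(m))_{tors}|\le c_\Gamma m^2$.

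I do not expect any serious obstacle: the argument is almost mechanical once one writes down the cellular complex. The only mildly delicate point is making sure the bases of $C^\bullet$ used on the $\mathbb{Z}$-side and on the $\mathbb{R}$-side agree so that Lemmas \ref{LemmGS} and \ref{Lemnorm} can be combined directly; this is handled by working with the monomial orthonormal basis of $\bar\Lambda(m)$ and the fixed $\mathbb{Z}[\Gamma]$-basis of $C_\bullet(\widetilde Y)$. Note that the same argument, applied to $d_0$ instead of $d_1$, would give an analogous bound for $|H^1_{tors}|$, though a much weaker one than what Lemma \ref{PropH1} provides; for the statement of Proposition \ref{propestab} only the $H^2$-bound is needed.
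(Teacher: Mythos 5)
Your argument is correct and coincides with the paper's proof in all essentials: both pass to a finite cell/simplicial complex, use Lemma \ref{Lemnorm} to bound the norms of the matrices of the (co)boundary maps by a quantity of the form $C_\Gamma^m$, and invoke the Gabber--Soul\'e Lemma \ref{LemmGS} to convert this into a bound on the torsion of the cokernel. The paper works with the chain complex and the boundary map (passing to $H^2$ at the end via the universal coefficient theorem and self-duality of $\bar\Lambda(m)$) whereas you work with the cochain complex and $d_1$ directly; this is a cosmetic rather than substantive difference.
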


\begin{proof}
Let $X:=\Gamma\backslash\mathbb{H}^3$. Let $\mathcal{K}$ 
be a smooth triangulation of $X$ and let $\tilde{\mathcal{K}}$ 
denote its lift to a smooth triangulation of $\mathbb{H}^3$. 
For each $q$ let
$C_q(\mathcal{K}):=\{\sigma_{1,q},\dots,\sigma_{N(\Gamma,q),q}\}$ denote the simplicial $q$-chains 
of $\mathcal{K}$, where $N(\Gamma,q)\in\mathbb{N}$ depends on $\mathcal{K}$. 
Let $C_q(\tilde{\mathcal{K}})$ 
denote the simplicial $q$-chains of $\tilde{\mathcal{K}}$ and let 
$\tilde{\partial}_q:C_{q}(\tilde{\mathcal{K}})\to C_{q-1}(\tilde{\mathcal{K}})$ be the 
corresponding boundary operator.  
For each $\sigma_{i,q}$  we fix a $\tilde{\sigma}_{i,q}\in\tilde{\mathcal{K}}$ such 
that $\pi_*(\tilde{\sigma}_{i,q})=\sigma_{i,q}$, where $\pi:\tilde{X}\to X$ is 
the covering map. 
The group 
$\Gamma$ acts on $C_q(\mathcal{K})$ and we denote the 
corresponding action simply by $\cdot$. 
For each $\sigma_{i,q}$ there exist elements
$\gamma_{k,q-1}\in\Gamma$, $k=1,\dots N(q-1,\Gamma)$, such that
\[
\tilde{\partial}_q(\tilde{\sigma}_{i,q})=\sum_{k=1}^{N(q-1,\Gamma)}\gamma_{k,q-1}\cdot\tilde{\sigma}_{k,q-1}.
\]

Let $C_q(\mathcal{K};\Lambda_m):=C_q(\mathcal{K})\otimes_{\mathbb{Z}[\Gamma]}\Lambda_m$. 
Then the homology groups $H_*(\Gamma;\Lambda_m)$ are isomorphic to the homology 
groups $H_*(C_*(\mathcal{K};\Lambda_m))$ of the complex
\[
(C_*(\mathcal{K};\Lambda_m),\partial_{*;\rho_m}):=(C_*(\mathcal{K})\otimes_{\mathbb{Z}[\Gamma]}\Lambda_m,\tilde{\partial}_{*}\otimes\Id). 
\] 
Let $\{v_0,\dots,v_m\}$ denote the standard integral 
basis of $\Lambda_m$ as in Lemma \ref{Lemnorm}.
Then an integral basis of $C_q(\mathcal{K};\Lambda_m)$ is given by 
\[
B_q(\mathcal{K};\Lambda_m):=\{\tilde{\sigma}_{i,q}\otimes v_j\colon i=1,\dots,N(\Gamma,q)\colon j=0,\dots,m\}.
\] 
We equip $C_q(\mathcal{K};\Lambda_m)\otimes_\Z\R$ with 
the inner product for which $B_q(\mathcal{K};\Lambda_m)$ is an orthonormal 
basis and denote the corresponding 
norm by $\left\|\cdot\right\|_{C_q(\mathcal{K};V(m))}$. 
Then 
we have
\[
\partial_{q;\rho_m}(\tilde{\sigma}_{i,q}\otimes v_j)=\sum_{k=1}^{N(q-1,\Gamma)}\tilde{\sigma}_{k,q-1}\otimes(\rho_m(\gamma_{k,q-1}^{-1})v_j)
\]
and thus by the definition of the norms we have
\begin{align}\label{est}
&\left\|\partial_{q;\rho_m}(\tilde{\sigma}_{i,q}\otimes v_j)\right\|_{\mathcal{C}_{q-1}(\mathcal{K};V(m))}\leq N(q-1,\Gamma)
\max_{k=1,\dots,N(q-1,\Gamma)}\left\|\rho_m(\gamma_{k,q-1}^{-1})\right\|_{\End(V_m)}
\nonumber\\=&N(q-1,\Gamma)\left(\max_{k=1,\dots,N(q-1,\Gamma)}\left\|\rho_1(\gamma_{k,q-1}^{-1})\right\|_{\End(V_1)}\right)^m,
\end{align}
where the last step follows from Lemma \ref{Lemnorm}.
We put 
\[
c_0(\Gamma):=\max_q\max_{k=1,\dots,N(q-1,\Gamma)}\left\|\rho_1(\gamma_{k,q-1}^{-1})\right\|_{\End(V_1)}.
\]
Then $c_0(\Gamma)$ depends on $\Gamma$ and the triangulation $\mathcal{K}$, but not on the 
local system $\Lambda_m$. If  
we apply Lemma \ref{LemmGS} with 
$A:=C_q(\mathcal{K};\Lambda_m)$, $B:=C_{q-1}(\mathcal{K};\Lambda_m)$, $\phi:=\partial_{q;\rho_m}$ and  $\alpha:=N(\Gamma,q-1)c_0(\Gamma)^{m}$, 
then, using that $\rk_\Z A=(m+1)N(q,\Gamma)$, $\rk_z B=(m +1)N(q-1,\Gamma)$ we obtain from \eqref{est} that 
\[
\left|\left(coker (\partial_{q;\rho_m})\right)_{tors}\right|\leq \left(N(\Gamma,q-1)c_0(\Gamma)^{m}\right)^{(m+1)\min\{N(q,\Gamma),N(q-1,\Gamma)\}}
\]
For $\check{\Lambda}_m$ one argues in the same way.
Thus the proposition follows by applying the universal coefficient theorem.  
\end{proof}

\begin{bmrk}
Using the $KAK$-decomposition, it 
should be possible to generalize Lemma \ref{Lemnorm} 
and thus the proof of Proposition \ref{propestab} 
to arithmetic subgroups $\Gamma$ of arbitrary connected semisimple Liegroups $G$ defined 
over $\Q$ which satisfy $\delta(G)=1$. 
For suitable rays $\rho_\lambda(m)$ of $\Q$-rational representations of $G$ 
of highest weight $m\lambda$ with $\Gamma$-invariant 
integral lattices $\Lambda(\rho_\lambda(m))$, this should give an upper 
bound of the corresponding sizes of all twisted 
cohomological torsion subgroups $H^*_{tors}(\Gamma,\Lambda(\rho_\lambda(m)))$ by $C(\Gamma)m\dim\rho_\lambda(m)$. Such a bound  
can be regarded as complementary to the lower bound 
obtained in the compact case in \cite{MPcoho}.
\end{bmrk}
Let $\aL_0\in\mathcal{A}$ be as in the previous section.
If we apply Proposition \ref{propestab} for the 
group $\Gamma(\aL_0)$ instead of the group $\Gamma(\aL)$ and 
also use Proposition \ref {asRT}, we can improve 
the constant in the upper bound of the size of $m^{-2}\log|H^2(\Gamma(\aL),\bar{\Lambda}(m))_{tors}|$ and 
thus prove \eqref{esttorsab}. 
Namely, arguing similar as in the proof 
of \eqref{esttorsb} given in the previous section, we obtain
\begin{align*}
&\lim\sup_{m\to\infty}m^{-2}\frac{[\Gamma_D:\Gamma(\aL_0)]}{\#(\mathcal{O}_D^*) N(\aL_0)}\log|H^2_{tors}(\Gamma(\aL),\bar{\Lambda}(m))|-\frac{[\Gamma_D:\Gamma(\aL)]}{\#(\mathcal{O}_D^*) N(\aL)}c(\Gamma_0)\\
\leq &\lim\sup_{m\to\infty}m^{-2}\left(-\frac{[\Gamma_D:\Gamma(\aL_0)]}{\#(\mathcal{O}_D^*)N(\aL_0)}\log\tau_{Eis}(X_{\aL};E_{\bar{\rho}(m)})+
\frac{[\Gamma_D:\Gamma(\aL)]}{\#(\mathcal{O}_D^*) N(\aL)}\log\tau_{Eis}(X_{\aL_0};E_{\bar{\rho}(m)})\right)
\\ &+\frac{[\Gamma_D:\Gamma(\aL_0)]}{\#(\mathcal{O}_D^*) N(\aL)}c(\Gamma(\aL_0)).
\end{align*}
Invoking Proposition \ref{asRT}, equation \eqref{esttorsab} follows.

%%%%%%%%%%%%%%%%%%%%%%%%%%%%%%%%%%%%%%%%%%%%%%%%%%%%%%%%%%%%%%%%%%%%%%%%%%%%%%%%

\bibliographystyle{alpha}
\bibliography{bib}

\end{document}